\tikzset{>=stealth}
\theoremstyle{plain}
\newtheorem{thm}{Theorem}[section]
\newtheorem{lem}[thm]{Lemma}
\newtheorem{prop}[thm]{Proposition}
\newtheorem{cor}[thm]{Corollary}
\theoremstyle{definition}
\newtheorem{defn}[thm]{Definition}
\newtheorem{conj}[thm]{Conjecture} 
\newtheorem*{notation}{Notation}
\newtheorem{rmk}[thm]{Remark}
\newtheorem{note}[thm]{Note}
\newtheorem{ques}[thm]{Question}
\newtheorem*{Op}{Organization of the paper}
\newtheorem*{acknowledgements}{Acknowledgements}
\newcommand{\OO}{\mathcal{O}}
\title{Extended Okounkov bodies and multi-point Seshadri constants}
\author{Jaesun Shin}
\date{}
\subjclass[2010]{14C20, 14J99}
\keywords{Extended Okounkov body, Linear systems, Multi-weight moving Seshadri constant, Irrationality of Seshadri constants}
\address{Department of Mathematical Sciences, KAIST, 291 Daehak-ro, Yuseong-gu, Daejeon 305-701, Korea}
\email{jsshin1991@kaist.ac.kr}
\begin{document}
\maketitle

\begin{abstract}
Based on the work of Okounkov (\cite{O1, O2}), Kaveh-Khovanskii (\cite{KK}) and Lazarsfeld-Musta\c t\v a (\cite{LM}) independently associated a convex body, called the Okounkov body, to a big divisor on a normal projective variety with respect to an admissible flag. Although the Okounkov bodies carry rich positivity data of big divisors, they only provide information near a single point. The purpose of this paper is to introduce a convex body of a big divisor that is effective in handling the positivity theory associated with multi-point settings. These convex bodies open the door to approach the local positivity theory at multiple points from a convex-geometric perspective. We study their properties and shapes, and describe local positivity data via them. Finally, we observe the irrationality of Seshadri constants with the help of a relation between Nakayama constants and Seshadri constants.
\end{abstract}

\begin{section} {Introduction}
After the advent of Okounkov bodies in projective geometry, the main question is how to connect them with the geometry of an underlying polarized variety. Thanks to \cite[Proposition 4.1]{LM} and \cite[Theorem A]{J}, it is expected that we should be able to gain information about line bundles in terms of Okounkov bodies. (See \cite{R} for an infinitesimal version in case of surfaces.) In addition to the numerical data (\cite{CHPW, CPW1, CPW2, I, KL1703, KL1507, KL1411, KL1506, KLM}) that Okounkov bodies have, they also provide a useful tool in analyzing higher syzygies on polarized abelian surfaces (\cite{KL1509, S}). 

Despite the usefulness of Okounkov body in projective geometry, it can only provide local positivity data around a point: more precisely, for an admissible flag $Y_{\bullet}:X=Y_{0} \supseteq \dots \supseteq Y_{n}=\{x\}$, the Okounkov body $\Delta_{Y_{\bullet}}(D)$ of a big divisor $D$ has local positivity data near $x$. In other words, it is difficult to address the positivity problems associated with multi-point settings using the Okounkov body. 

The purpose of this paper is to introduce a convex body of a linear series, which we call the extended Okounkov body, to address this problem. As one might expect, this is a generalization of Okounkov bodies and shares many of their useful properties. Moreover, we can approach many positivity data which cannot be handled by the theory of Okounkov bodies with the theory of extended Okounkov bodies. 

We start by constructing the extended Okounkov body. Let $X$ be a smooth projective variety of dimension $n$, and let $Y^{i}_{\bullet}:X=Y^{i}_{0} \supseteq Y^{i}_{1} \supseteq \cdots \supseteq Y^{i}_{n}=\{x_{i}\}$ be admissible flags for $i=1,\dots,r$, where $x_{i} \notin Y^{j}_{1}$ for all $j \neq i$. Given a big line bundle $D$ on $X$, one defines a function 
\begin{align*}
\nu_{Y^{1}_{\bullet}, \dots, Y^{r}_{\bullet}} : \text{ }H^{0}(X,\OO_{X}(D)) \rightarrow \mathbb{Z}^{nr} \cup \{\infty\}, \text{  } s \mapsto (\nu_{1}^{(1)}(s), \dots, \nu_{n}^{(1)}(s) \text{ ; } \dots \text{ ; } \nu_{1}^{(r)}(s), \dots, \nu_{n}^{(r)}(s))
\end{align*}
as follows. First, set $\nu_{1}^{(i)}(s)={\rm ord}_{Y^{i}_{1}}(s).$ After choosing local equations for $Y^{i}_{1}$'s  in $X$, $s$ determines a section 
\begin{align*}
\bar{s}_{1} \in H^{0}(X,\OO_{X}(D-\nu_{1}^{(1)}(s)Y^{1}_{1}- \cdots -\nu^{(r)}_{1}(s)Y^{r}_{1})).
\end{align*}
By restricting $\bar{s}_{1}$ to each $Y^{i}_{1}$, we get $r$ sections 
\begin{align*}
s^{(i)}_{1} \in H^{0}(Y_{1}^{i},\OO_{Y_{1}^{i}}({(D-\nu_{1}^{(1)}(s)Y^{1}_{1}- \cdots -\nu^{(r)}_{1}(s)Y^{r}_{1})|}_{Y_{1}^{i}})).
\end{align*}
For each $s_{1}^{(i)}$, we take $\nu_{2}^{(i)}(s)={\rm ord}_{Y_{2}^{i}}(s_{1}^{(i)})$ and continue in this manner to define $\nu_{Y^{1}_{\bullet}, \dots, Y^{r}_{\bullet}}(s)$. 

Next, define the semi-group
\begin{align*}
\Gamma_{Y^{1}_{\bullet}, \dots, Y^{r}_{\bullet}}(D)=\{(\nu_{Y^{1}_{\bullet}, \dots, Y^{r}_{\bullet}}(s),m) \text{ }|\text{ } 0 \neq s \in H^{0}(X,\OO_{X}(mD)), \text{ } m \in \mathbb{N}\} 
\end{align*}
of $\mathbb{N}^{nr} \times \mathbb{N} \subseteq \mathbb{R}^{nr} \times \mathbb{R}$. Then the extended Okounkov body of $D$ with respect to $Y^{1}_{\bullet}, \dots, Y^{r}_{\bullet}$ is the convex body 
\begin{align*}
\Delta_{Y^{1}_{\bullet}, \dots, Y^{r}_{\bullet}}(D)={\rm Cone}(\Gamma_{Y^{1}_{\bullet}, \dots, Y^{r}_{\bullet}}(D)) \cap (\mathbb{R}^{nr} \times \{1\}).
\end{align*}

As a first step, we study the variation of these bodies as functions of big divisors. It is easy to check that $\Delta_{Y^{1}_{\bullet}, \dots, Y^{r}_{\bullet}}(pD)=p \cdot \Delta_{Y^{1}_{\bullet}, \dots, Y^{r}_{\bullet}}(D)$ for every integer $p$, so there is a naturally defined $\Delta_{Y^{1}_{\bullet}, \dots, Y^{r}_{\bullet}}(D)$ for any big rational class $D$. In the theory of Okounkov bodies, there exists a closed convex cone that observes the variation of Okounkov bodies, which is called the global Okounkov body (\cite[Theorem B]{LM}). The existence of such a closed convex cone implies that each Okounkov body fits together nicely: they vary continuously in the cone of big $\mathbb{R}$-divisors. Moreover, this yields a natural definition of $\Delta_{Y^{1}_{\bullet},\dots, Y^{r}_{\bullet}}(D)$ for any big $\mathbb{R}$-divisor $D$ on $X$ (cf. Remark \ref{rmk:R-divisor}). As one might expect, a similar situation occurs in the theory of extended Okounkov bodies:

\begin{thm} \label{thm:global}
Let $X$ be a normal projective variety of dimension $n$, and let $Y^{1}_{\bullet}, \dots, Y^{r}_{\bullet}$ be admissible flags centered at $x_{1}, \dots, x_{r} \in X$. Then there exists a closed convex cone $\Delta_{Y^{1}_{\bullet},\dots, Y^{r}_{\bullet}}(X) \subseteq \mathbb{R}^{nr} \times {{\rm N^{1}}(X)}_{\mathbb{R}}$ characterized by the property that in the commutative diagram 
\begin{displaymath}
\xymatrix{
\Delta_{Y^{1}_{\bullet},\dots, Y^{r}_{\bullet}}(X) \ar[dr] \ar@{^{(}->}[rr] && \mathbb{R}^{nr} \times {{\rm N^{1}}(X)}_{\mathbb{R}} \ar[dl]^{{\rm pr}_{2}} \\
&{{\rm N^{1}}(X)}_{\mathbb{R}}, &}
\end{displaymath}
${\rm pr}_{2}^{-1}(D) \cap \Delta_{Y^{1}_{\bullet},\dots, Y^{r}_{\bullet}}(X)=\Delta_{Y^{1}_{\bullet},\dots, Y^{r}_{\bullet}}(D)$ for any big class $D \in {{\rm N^{1}}(X)}_{\mathbb{Q}}$. 
\end{thm}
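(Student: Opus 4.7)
The plan is to mimic the construction used by Lazarsfeld--Musta\c t\v a for the ordinary global Okounkov body, with the single-flag valuation replaced by the multi-flag valuation $\nu := \nu_{Y^1_\bullet,\dots,Y^r_\bullet}$. First I would fix integral divisors $D_1,\dots,D_\rho$ on $X$ whose numerical classes form a $\mathbb{Q}$-basis of $N^1(X)_\mathbb{Q}$, where $\rho$ is the Picard number, and use this basis to identify $\mathbb{R}^{nr} \times N^1(X)_\mathbb{R}$ with $\mathbb{R}^{nr} \times \mathbb{R}^\rho$. Writing $D_m := m_1 D_1 + \cdots + m_\rho D_\rho$ for $m \in \mathbb{N}^\rho$, I would then form the global semigroup
\[
\Gamma \ :=\ \bigl\{(\nu(s),\,m)\ :\ 0 \neq s \in H^0(X,D_m),\ m \in \mathbb{N}^\rho\bigr\}\ \subseteq\ \mathbb{N}^{nr} \times \mathbb{N}^\rho
\]
and define $\Delta_{Y^1_\bullet,\dots,Y^r_\bullet}(X)$ as the closure in $\mathbb{R}^{nr} \times \mathbb{R}^\rho$ of the convex cone spanned by $\Gamma$. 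By construction this is a closed convex cone mapping to $N^1(X)_\mathbb{R}$.

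The fact that $\Gamma$ is actually a semigroup comes down to additivity of $\nu$ on nonzero products: $\nu(s\otimes t) = \nu(s) + \nu(t)$. For each $i$, the $i$-th block of coordinates is the classical Okounkov valuation attached to $Y^i_\bullet$, and the successive restrictions used to define it depend only on the geometry near $x_i$; because $x_i \notin Y^j_1$ for $j\ne i$, the $r$ blocks of peeling-off operations are independent of one another, and additivity follows blockwise from the standard single-flag argument. This is the technical point where the disjointness hypothesis on the flags enters.

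Next, for the commutative diagram, I need to verify $\mathrm{pr}_2^{-1}(D) \cap \Delta_{Y^1_\bullet,\dots,Y^r_\bullet}(X) = \Delta_{Y^1_\bullet,\dots,Y^r_\bullet}(D)$ for every big $D \in N^1(X)_\mathbb{Q}$. Writing $D = \frac{1}{p}D_m$ and using homogeneity of the construction, one reduces to $D = D_m$ with $m \in \mathbb{N}^\rho$. The slice of the (closed) cone on $\Gamma$ over this ray is the closure of
\[
\bigcup_{k\ge 1}\ \tfrac{1}{k}\ \nu\bigl(H^0(X,kD)\setminus\{0\}\bigr),
\]
which is precisely $\mathrm{Cone}\bigl(\Gamma_{Y^1_\bullet,\dots,Y^r_\bullet}(D)\bigr) \cap (\mathbb{R}^{nr}\times\{1\})$, i.e.\ $\Delta_{Y^1_\bullet,\dots,Y^r_\bullet}(D)$. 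Independence from the chosen basis $D_1,\dots,D_\rho$ follows from this slice characterization over the dense set of big rational classes.

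The hard part, as in Lazarsfeld--Musta\c t\v a, will be justifying the slice identification: a priori the closure operation could introduce points in $\mathrm{pr}_2^{-1}(D)$ that arise as limits along directions transverse to the ray through $(0,D)$. Ruling this out requires the multi-flag analogue of \cite[Proposition 4.1]{LM}, which in turn needs the Okounkov--Khovanskii largeness condition on $\Gamma$ (the cone it generates must be full dimensional and project surjectively onto the base). I expect the disjointness hypothesis $x_i \notin Y^j_1$ to be exactly what is needed to decouple the $r$ valuations and let one build sections realizing prescribed valuation vectors from sections realizing them at each point separately; making this construction quantitative enough to verify the largeness condition is the step I expect to absorb the bulk of the work.
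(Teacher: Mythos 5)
Your construction --- the multigraded semigroup with respect to a $\mathbb{Z}$-basis $D_1,\dots,D_{\rho}$ of $N^1(X)$, the closed convex cone it spans, and the reduction by homogeneity to integral $D$ followed by a Lazarsfeld--Musta\c t\v a slicing lemma --- is exactly the paper's route (the relevant reference is \cite[Proposition 4.9]{LM} rather than Proposition 4.1). So the approach matches.

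The gap is that you explicitly defer verifying the hypothesis of that slicing lemma, and that verification is essentially the whole content of the paper's proof. The paper shows that $\Gamma = \Gamma_{Y^1_\bullet,\dots,Y^r_\bullet}(X;D_1,\dots,D_\rho)$ generates $\mathbb{Z}^{nr}\times\mathbb{Z}^{\rho}$ as a group, reducing to the claim that $\Gamma_{Y^1_\bullet,\dots,Y^r_\bullet}(D_i)$ generates $\mathbb{Z}^{nr+1}$ for each basis divisor $D_i$. Following \cite[Lemma 2.2]{LM}, one writes $D_i = A_j - B_j$ with $A_j, B_j$ very ample and sufficiently positive and uses Bertini to produce sections $s_0^{(j)} \in H^0(A_j)$ and $t_0^{(j)},\dots,t_n^{(j)} \in H^0(B_j)$ whose valuation along $Y^j_\bullet$ is $0$, respectively ${\bf e}_i$. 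The crucial point --- and the place where $x_i \notin Y^j_1$ for $j \neq i$ is actually used --- is that avoiding the other centers $x_k$, $k\neq j$, is an open condition, so one may additionally insist that the zero loci of the $t_i^{(j)}$ miss them; this forces the full $\mathbb{Z}^{nr}$-valuation of $t_i^{(j)}$ to be the $((j-1)n+i)$-th standard basis vector, not merely to have that as its $j$-th block. Combining these with effectivity of $mD_i - B_j$ for $m \gg 0$ and valuative additivity produces all elementary vectors plus a graded shift inside the group generated by $\Gamma_{Y^1_\bullet,\dots,Y^r_\bullet}(D_i)$, which is the claim. Your sketch correctly anticipates that the disjointness hypothesis decouples the $r$ valuations, but the proof is not complete until this Bertini-plus-open-condition construction is run; it is not a formality, since without the open-condition refinement the $t_i^{(j)}$ could well vanish at the other centers and the group generated would a priori be a proper sublattice.
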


Next, we connect Seshadri constants at multiple points with these convex bodies. After Seshadri's criterion for ampleness (\cite[Theorem 7.1]{H}), one tried to measure the extent of its positivity. This leads to the definition of Seshadri constant, introduced by Demailly \cite{JPD}, which measures the local positivity of an ample line bundle at a point (\cite{BDHKKSS}), and its extension, the moving Seshadri constant, was introduced by Nakamaye (\cite{Na}). For this invariant, K\" uronya and Lozovanu presented a nice convex geometric description (\cite[Theorem C]{KL1507}). Motivated by their result, it is natural to ask what can be stated about the Seshadri constants at multiple points (Definition \ref{defn:multi-weight}) in terms of convex geometry. 

To fix terminology, let $x_{1},\dots,x_{r}$ be $r$ distinct points on a smooth projective variety $X$, and denote by $\pi:\tilde{X}={\rm Bl}_{\{x_{1},\dots,x_{r}\}}(X) \rightarrow X$ the blow-up of $X$ at $x_{1},\dots,x_{r}$. When $Y^{1}_{\bullet},\dots,Y^{r}_{\bullet}$ are infinitesimal over $x_{1},\dots,x_{r}$ (\cite[Definition 2.1]{KL1507}), $\Delta_{Y^{1}_{\bullet}, \dots, Y^{r}_{\bullet}}(\pi^{*}D)$ will be denoted by $\widetilde{\Delta}_{Y^{1}_{\bullet}, \dots, Y^{r}_{\bullet}}(D)$. 

Moreover, let $(\nu_{1}^{(1)}, \dots, \nu_{n}^{(1)} \text{ ; } \dots \text{ ; } \nu_{1}^{(r)}, \dots, \nu_{n}^{(r)})$ be the standard coordinate of $\mathbb{R}^{nr}$. For a $(\xi_{1},\dots,\xi_{r}) \in \mathbb{R}^{r}$, denote by ${\bf v}^{(\xi_{1},\dots,\xi_{r})}_{j}=\sum_{i=1}^{r} \xi_{i} \cdot \nu_{j}^{(i)}$. Finally, let us define the inverted standard slice simplex of size $(\xi_{1},\dots,\xi_{r}) \in \mathbb{R}_{\ge 0}^{r}$ in $\mathbb{R}^{nr}$: this is the convex body
\begin{align*}
\Delta_{(\xi_{1},\dots,\xi_{r})}^{-r} \overset{\rm def}{=} \text{convex hull of } \{{\bf 0}, \text{ } {\bf v}^{(\xi_{1},\dots,\xi_{r})}_{1}, {\bf v}^{(\xi_{1},\dots,\xi_{r})}_{1}+{\bf v}^{(\xi_{1},\dots,\xi_{r})}_{2}, \dots, \text{ } {\bf v}^{(\xi_{1},\dots,\xi_{r})}_{1}+{\bf v}^{(\xi_{1},\dots,\xi_{r})}_{n}\} \subseteq \mathbb{R}^{nr}.
\end{align*}

It follows from our argument that infinitesimal extended Okounkov bodies over points where the big $\mathbb{R}$-divisor $D$ is locally ample always contain the inverted standard slice simplices of size $(m_{1}a,\dots,m_{r}a)$ for some $a$, depending on $m_{1},\dots,m_{r}$. For all such infinitesimal flags $Y^{1}_{\bullet}, \dots, Y^{r}_{\bullet}$, the supremum of $a$ satisfying $ \Delta_{(m_{1}a,\dots,m_{r}a)}^{-r} \subseteq \widetilde{\Delta}_{Y^{1}_{\bullet}, \dots, Y^{r}_{\bullet}}(D)$ is called the largest inverted slice simplex constant with multi-weight ${\bf m}=(m_{1},\dots,m_{r})$ and will be denoted by $\xi_{\bf m}(D;x_{1},\dots,x_{r})$. As a result of our efforts, we obtain a description of multi-weight moving Seshadri constants in the following form. 

\begin{thm} \label{thm:multi-point Seshadri constants}
Let $D$ be a big $\mathbb{R}$-divisor on $X$. Then
\begin{align*}
\epsilon_{\bf m}(\lVert D \rVert;x_{1},\dots,x_{r})=\xi_{\bf m}(D;x_{1},\dots,x_{r})
\end{align*}
for any ${\bf m} \in \mathbb{N}^{r}$ and any $x_{1},\dots,x_{r} \in X$.
\end{thm}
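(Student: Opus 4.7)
The plan is to follow the strategy of K\"uronya--Lozovanu \cite{KL1507} for the single-point moving Seshadri constant and extend it to the multi-point, multi-weight setting. By Theorem \ref{thm:global} the extended Okounkov bodies vary continuously in the big cone, and the multi-weight moving Seshadri constant is continuous in $D$ as well, so it suffices to prove the equality for big $\mathbb{Q}$-divisors, and after clearing denominators I may assume $D$ is Cartier. The synchronized vectors ${\bf v}_j^{(\xi_1,\dots,\xi_r)}=\sum_{i=1}^r \xi_i\,\nu_j^{(i)}$ defining the slice simplex are precisely what allow the single-center jet-separation arguments to be performed jointly at all $r$ centers.

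For the inequality $\epsilon_{\bf m}(\lVert D\rVert;x_1,\dots,x_r)\ge \xi_{\bf m}(D;x_1,\dots,x_r)$, I would fix $0<a<\xi_{\bf m}(D;x_1,\dots,x_r)$ together with arbitrary infinitesimal flags $Y^1_{\bullet},\dots,Y^r_{\bullet}$ whose centers are chosen points $y_i\in E_i=\pi^{-1}(x_i)$. The hypothesis that $\Delta^{-r}_{(m_1 a,\dots,m_r a)}\subseteq \widetilde{\Delta}_{Y^1_{\bullet},\dots,Y^r_{\bullet}}(D)$, applied at the vertices ${\bf v}_1^{(m_1 a,\dots,m_r a)}$ and ${\bf v}_1^{(m_1 a,\dots,m_r a)}+{\bf v}_j^{(m_1 a,\dots,m_r a)}$, produces for $p\gg 0$ divisible sections of $H^0(X,\mathcal{O}_X(pD))$ whose pull-backs to $\widetilde X$ vanish to order $\ge pm_i a$ along $E_i$ for every $i$ (confirming effectivity of $p(\pi^*D-a\sum m_i E_i)$) and which in addition carry the full staircase of vanishings along the flag elements $Y^i_j$, which is the data needed to separate $pm_i a$-jets at $y_i$ simultaneously for all $i$. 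Since the centers $y_i\in E_i$ were arbitrary, varying them gives $E_i\cap B_+(\pi^*D-a\sum m_i E_i)=\emptyset$ for every $i$, hence $\epsilon_{\bf m}(\lVert D\rVert;x_1,\dots,x_r)\ge a$.

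For the converse inequality $\xi_{\bf m}(D;x_1,\dots,x_r)\ge \epsilon_{\bf m}(\lVert D\rVert;x_1,\dots,x_r)$, I would fix $0<a<\epsilon_{\bf m}(\lVert D\rVert;x_1,\dots,x_r)$, so that by definition $\pi^*D-a\sum m_i E_i$ is big and $E_i\cap B_+(\pi^*D-a\sum m_i E_i)=\emptyset$ for every $i$. By Nakamaye's theorem, for $p\gg 0$ divisible one can decompose $p(\pi^*D-a\sum m_i E_i)=A_p+N_p$ with $A_p$ ample and $N_p$ effective such that no $E_i$ is contained in $\mathrm{Supp}(N_p)$. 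For infinitesimal flags $Y^i_{\bullet}$ centered at generic points $y_i\in E_i\setminus \mathrm{Supp}(N_p)$, the ampleness of $A_p$ produces sections of large multiples of $A_p$ realizing arbitrarily prescribed valuations along the flag elements $Y^i_{\bullet}$ at each $y_i$; twisting by $N_p$ and then by $pa\sum m_i E_i$ yields sections of a multiple of $\pi^*D$ on $\widetilde X$, which push forward to sections of a multiple of $D$ on $X$ whose valuations jointly realize each of the vertices of $\Delta^{-r}_{(m_1 a,\dots,m_r a)}$. Taking convex hulls gives the inclusion of the simplex in $\widetilde{\Delta}_{Y^1_{\bullet},\dots,Y^r_{\bullet}}(D)$; extending from generic flag centers to all infinitesimal flag centers via Theorem \ref{thm:global} completes the argument.

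The hardest step will be the converse direction: synthesizing sections at the $r$ distinct centers that \emph{simultaneously} realize all $n+1$ vertices of the inverted slice simplex with the prescribed multi-weight ratios $(m_1 a,\dots,m_r a)$. In the single-point case one descends the flag $Y_\bullet$ one step at a time using ampleness on successive blow-ups, but in the multi-point case these descents must be coordinated across all $r$ centers, and care is required to ensure that vanishing imposed at $y_i$ does not spoil the intended valuations along the flag at $y_{i'}$ for $i\neq i'$. The non-containment $E_i\not\subset B_+(\pi^*D-a\sum m_i E_i)$ for all $i$ is precisely the multi-point strengthening needed to make these descents compatible, and this compatibility is what is encoded in the synchronized structure of the vectors ${\bf v}_j^{(\xi_1,\dots,\xi_r)}$.
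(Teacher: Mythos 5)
Your outline for the direction $\xi_{\bf m}(D;x_1,\dots,x_r)\ge\epsilon_{\bf m}(\lVert D\rVert;x_1,\dots,x_r)$ is broadly in the spirit of the paper's Lemma~\ref{lem:inequality}: produce sections on the blow-up realizing the vertices of the inverted slice simplex, lift, and use convexity. Two remarks: the paper works directly from Definition~\ref{defn:multi-weight} (first the ample case via the exact sequence $0\to\OO_{\tilde X}(mK_\epsilon-\sum E_i)\to\OO_{\tilde X}(mK_\epsilon)\to\OO_{\sqcup E_i}(\cdots)\to 0$ with Serre vanishing, then the big case by passing through a resolution $f$ and a decomposition $f^*D=A+E$), which sidesteps the characterization ``$a<\epsilon_{\bf m}\Longrightarrow\pi^*D-a\sum m_iE_i$ big with $E_i\cap{\bf B}_+(\cdot)=\emptyset$'' that you assert ``by definition'' but never establish; and the assertion that ampleness lets you realize arbitrary prescribed valuations simultaneously at $r$ flag centers does require the exact-sequence argument or an equivalent.

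The other direction $\epsilon_{\bf m}(\lVert D\rVert;x_1,\dots,x_r)\ge\xi_{\bf m}(D;x_1,\dots,x_r)$ has a genuine gap. You claim that sections with valuation vectors at the $n+1$ vertices of $\Delta^{-r}_{(m_1 a,\dots,m_r a)}$ constitute ``the data needed to separate $pm_i a$-jets at $y_i$ simultaneously.'' This is not correct: jet separation is surjectivity of the evaluation map onto a space of dimension $\sum_i\binom{n+pm_ia}{n}$, and $n+1$ sections with prescribed vanishing orders do not come close to spanning it. (Moreover a point of an extended Okounkov body is only an asymptotic limit of rescaled valuations; that it is actually realized by a section of some $H^0(X,\OO_X(pD))$ is the ``valuative point'' question, which itself needs an argument.) The paper's Proposition~\ref{prop:jet separation} replaces this by a multiplier-ideal vanishing argument: from the simplex inclusion one deduces that $B=\pi^*D-\sum_i(n+km_i)E_i$ is big and that ${\bf 0}\in\Delta_{Y^i_\bullet}(B)$ for every infinitesimal flag, hence $E_i\cap{\bf B}_-(B)=\emptyset$ by \cite[Theorem 2.1]{KL1506}, hence ${\rm Zeroes}(\mathcal J(\tilde X,\|B\|))\cap E_i=\emptyset$ by \cite[Corollary 2.10]{ELMNP2}, and then Nadel vanishing for $K_{\tilde X}+B$ gives the needed surjectivity of the jet-evaluation map for $K_X+D$. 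The ``$+n$'' padding of the simplex size, the passage through $K_X+D$ rather than $D$, and the final limiting argument $\beta_m\to0$ using continuity of the moving Seshadri constant (Lemma~\ref{lem:continuity}) to strip off $K_X$ are all essential and have no counterpart in your sketch. Finally, your concluding step from ``jet separation at arbitrary $y_i\in E_i$'' to ``$E_i\cap{\bf B}_+(\pi^*D-a\sum m_iE_i)=\emptyset$, hence $\epsilon_{\bf m}\ge a$'' again invokes an unestablished characterization of $\epsilon_{\bf m}$; the paper instead goes through $s_{\bf m}$ and Lemma~\ref{lem:continuity}, which is cited, not reproved.
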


One of the most important aspects of the Okounkov bodies is that they encode interesection theory of $D$ as Euclidean volumes of these convex sets. In this regard, we propose the following conjecture and obtain a partial answer. Before we proceed, we denote by $S_{(m_{1},\dots,m_{r})}$ the slice
\begin{align*}
\{(\nu_{1}^{(1)}, \dots, \nu_{n}^{(1)} \text{ ; } \dots \text{ ; } \nu_{1}^{(r)}, \dots, \nu_{n}^{(r)}) \in \mathbb{R}^{rn} \text{ } \lvert \text{ } \frac{\nu_{1}^{(1)}}{m_{1}}=\cdots=\frac{\nu_{1}^{(r)}}{m_{r}}, \dots, \text{ } \frac{\nu_{n}^{(1)}}{m_{1}}=\cdots=\frac{\nu_{n}^{(r)}}{m_{r}}\} \cong \mathbb{R}^{n}. 
\end{align*}

\begin{conj} \label{conj:Shin}
Let $\pi:{\rm Bl}_{r}(X) \rightarrow X$ be the blow-up of $X$ at $r$ general points $x_{1},\dots,x_{r}$, and let $D$ be a big $\mathbb{R}$-divisor on $X$. Then   
\begin{align*}
{\rm vol}_{\mathbb{R}^{n}}({\widetilde{\Delta}_{Y^{1}_{\bullet},\dots,Y^{r}_{\bullet}}(D)}_{\frac{\nu_{1}^{(1)}}{m_{1}}=\cdots=\frac{\nu_{1}^{(r)}}{m_{r}}, \dots, \text{ } \frac{\nu_{n}^{(1)}}{m_{1}}=\cdots=\frac{\nu_{n}^{(r)}}{m_{r}}})=\frac{(\sqrt{r})^{n-2}}{n!} \cdot {\rm vol}_{X}(D)
\end{align*} 
for all infinitesimal flags $Y^{1}_{\bullet},\dots,Y^{r}_{\bullet}$ over $x_{1},\dots,x_{r}$ and any $(m_{1},\dots,m_{r}) \in \mathbb{N}^{r}$, where $\mathbb{R}^{n}=S_{(m_{1},\dots,m_{r})}$. 
\end{conj}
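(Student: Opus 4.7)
\textbf{Proof proposal for Conjecture \ref{conj:Shin}.} The plan is to reduce the statement to an asymptotic count on $\tilde X$, identify the contributing sections as those with ``proportional'' multi-vanishing at $x_1,\dots,x_r$, and read off the factor $(\sqrt{r})^{n-2}/n!$ as the combination of a Jacobian of the slice embedding $\mathbb{R}^n = S_{(m_1,\dots,m_r)}\hookrightarrow\mathbb{R}^{nr}$ and the classical $1/n!$ factor of Lazarsfeld--Musta\c t\v a.

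First I would reduce to the case of a big integral Cartier divisor. By Theorem \ref{thm:global} the extended Okounkov body varies continuously in $D$ on the big cone, and both ${\rm vol}_{\mathbb{R}^n}(\widetilde{\Delta}_{Y^{1}_{\bullet},\dots,Y^{r}_{\bullet}}(D) \cap S_{(m_1,\dots,m_r)})$ and ${\rm vol}_X(D)$ are continuous, degree-$n$ homogeneous; density of $\mathbb{Q}$-classes together with $\widetilde{\Delta}(pD) = p\,\widetilde{\Delta}(D)$ reduces us to big integral $D$. For such $D$ the slice volume equals
\[
\lim_{\varepsilon \to 0}\, \lim_{k \to \infty}\, \frac{c_{\varepsilon}}{k^n}\cdot \#\bigl\{\tfrac{1}{k}\widetilde{\nu}(s) \in N_{\varepsilon}(S_{(m_1,\dots,m_r)}) \bigm| 0 \neq s \in H^{0}(X, kD) \bigr\},
\]
with $c_{\varepsilon}$ a normalisation accounting for the slice embedding and the tube width.

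The heart of the argument is to evaluate this count. A section contributes if its multi-valuation is, up to $O(\varepsilon)$, of the form $\nu_j^{(i)}(s) = k m_i t_j$. Since $\nu_1^{(i)}(s) = {\rm mult}_{x_i}(s)$ for an infinitesimal flag, the condition on $t_1$ selects $s \in H^{0}(\tilde X, \pi^{*}kD - kt_1\sum m_i E_i)$; the restriction $\bar{s}|_{E_i}$ then lives in $H^{0}(\mathbb{P}^{n-1}, \OO(km_i t_1))$, and the higher-order constraints $\nu_j^{(i)} = km_i t_j$ for $j\ge 2$ are the usual Okounkov-body constraints on $E_i$ with respect to the flag inherited from $Y^{i}_{\bullet}$. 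For $r$ \emph{general} points and $t_1$ in the range where $\pi^{*}D - t_1\sum m_i E_i$ is nef, a Bertini/vanishing argument on $\tilde X$ should show that the $r$ restrictions $\bar{s}|_{E_1},\dots,\bar{s}|_{E_r}$ behave asymptotically independently, so the count splits as a product of classical Okounkov counts on the $E_i$, each contributing a $1/(n-1)!$ factor. Integrating over $t_1 \in [0, \xi_{\bf m}(D;x_1,\dots,x_r)]$ and using Theorem \ref{thm:multi-point Seshadri constants} to identify the endpoint, a direct computation on $\tilde X$ should produce ${\rm vol}_X(D)/n!$, with the Jacobian of the embedding $\mathbb{R}^n = S_{(m_1,\dots,m_r)} \hookrightarrow \mathbb{R}^{nr}$ contributing the remaining $(\sqrt{r})^{n-2}$ correction after normalisation.

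The main obstacle is precisely the asymptotic independence of the $r$ restrictions. Outside the nef range of $\pi^{*}D - t_1\sum m_i E_i$, the restricted systems on distinct $E_i$ can share base components coming from ${\bf B}_{+}(\pi^{*}D - t_1\sum m_i E_i)$, and the naive product count overestimates the slice volume. I expect that in full generality this approach delivers only the inequality
\[
{\rm vol}_{\mathbb{R}^n}\bigl(\widetilde{\Delta}_{Y^{1}_{\bullet},\dots,Y^{r}_{\bullet}}(D) \cap S_{(m_1,\dots,m_r)}\bigr) \;\ge\; \frac{(\sqrt{r})^{n-2}}{n!} \cdot {\rm vol}_X(D),
\]
with the opposite inequality requiring either strong positivity of $\pi^{*}D$ on $\tilde X$ (e.g.\ locally ample at every $x_i$ with small $m_i$) or a multi-point analogue of Fujita approximation on the blow-up. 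I anticipate that the paper's ``partial answer'' is proved along these lines and covers exactly such a favourable range.
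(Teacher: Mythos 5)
You are attempting to prove what the paper states only as a conjecture; its partial answer is Proposition~\ref{prop:mono-graded}, which establishes the equality only in the mono-graded case ${\bf m}=(1,\dots,1)$ (and, for surfaces with general ${\bf m}$, only the upper bound $\le \tfrac12\,{\rm vol}_X(D)$). Your route is also genuinely different from the paper's partial answer. You set up a direct Fujita-type section count on $\tilde X$, restrict to the exceptional divisors $E_i$, and hope for asymptotic independence of the $r$ restrictions so that the count factors. The paper instead projects the slice ${\widetilde{\Delta}_{Y^{1}_{\bullet},\dots,Y^{r}_{\bullet}}(D)}_{(1,\dots,1)}$ to a single $\mathbb{R}^{n}$ factor via ${\rm pr}_j$ (so that volumes scale by $(\sqrt r)^{-n}$), identifies the image, through a linear map $F$ with Jacobian $1$, with Trusiani's multipoint Okounkov body $\Delta_j(D)$, and then applies \cite[Theorem A]{AT}. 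The asymptotic-independence difficulty you flag is precisely what is outsourced to Trusiani's theorem; the paper never attempts a direct count.

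Two concrete problems with your sketch. First, the independence of the restrictions $\bar{s}|_{E_1},\dots,\bar{s}|_{E_r}$ is not a Bertini-type fact even in the nef range: a single section $s$ determines all $r$ restrictions simultaneously, so even when the restriction map to $\bigoplus_i H^0(E_i,\OO_{E_i}(km_i t_1))$ is surjective, the fibres over a product of valuation vectors need not have the expected dimension, and the count does not factor into $r$ independent $(n-1)$-dimensional counts. This is the central difficulty of the multi-point situation, and it is exactly what the multipoint Okounkov bodies of Trusiani are built to handle. Second, your concluding inequality points the wrong way given your stated reason: if the naive product count ``overestimates the slice volume,'' that yields an upper bound $\le$, not the lower bound $\ge$ you write. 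In fact the $\ge$ direction is the cheap one (it follows from the inclusion of inverted simplices, Lemma~\ref{lem:inequality} together with Theorem~\ref{thm:multi-point Seshadri constants}); the $\le$ direction is where the difficulty lies, and the paper obtains it for surfaces (Proposition~\ref{prop:mono-graded}(2)) not by a counting argument but by introducing a weight-modified variant $\Delta_j^{(m_1,\dots,m_r)}(D)$ of Trusiani's construction and bounding the sum of its volumes.
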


For the mono-graded case, we have a positive answer to Conjecture \ref{conj:Shin} (Proposition \ref{prop:mono-graded}). As a result, we obtain a relation between $\epsilon(L;x_{1},\dots,x_{r})$ and the Nakayama constant $\mu(L;x_{1},\dots,x_{r})$ for a polarized surface $(S,L)$ (Proposition \ref{prop:general relation}). 

An interesting by-product of Proposition \ref{prop:general relation} is about the irrationality of Seshadri constants on general rational surfaces. In \cite{DKMS}, Dumnick, K\"uronya, Maclean, and Szemberg show for $s \ge 9$, the SHGH conjecture implies the existence of an ample line bundle on ${\rm Bl}_{r}(\mathbb{P}^{2})$ with irrational Seshadri constant. Motivated by their result, Hanumanthu and Harbourne (\cite{HH}) show that the $(-1)$-curve conjecture which is weaker than assuming the SHGH conjecture is sufficient to draw the same (or even a stronger) conclusion. From Proposition \ref{prop:general relation}, we generalize their results (Theorem \ref{thm:irrationality}): under a weaker assumption (Conjecture \ref{conj:standard}, Lemma \ref{lem:(-1) and standard}), we draw a stronger conclusion than \cite{DKMS, HH}. An important point of our proof is that even the full Conjecture \ref{conj:standard} is not needed for the irrationality of Seshadri constants, e.g. Corollary \ref{cor:homogeneous}. 

\begin{notation} We work over the complex numbers, and let $\mathbb{N}=\{1,2, \dots\}$. Denote by ${\bf 0}$ the origin in an Euclidean space $\mathbb{R}^{k}$ for some $k \in \mathbb{N}$. We denote $X$ a smooth projective variety of dimension $n \ge 2$ unless specified. A divisor means a $\mathbb{Q}$-Cartier $\mathbb{Q}$-divisor. For a subset $\Delta \subseteq \mathbb{R}^{rn}$, ${\rm Conv}(\Delta)$ is the smallest closed convex set containing $\Delta$. 

Moreover, let $(\nu_{1}^{(1)}, \dots, \nu_{n}^{(1)} \text{ ; } \dots \text{ ; } \nu_{1}^{(r)}, \dots, \nu_{n}^{(r)})$ be the standard coordinate of $\mathbb{R}^{nr}$. By abuse of notation, we also denote by $\nu_{j}^{(i)}$ the $((i-1)n+j)$-th standard basis vector of $\mathbb{R}^{nr}$ for each $i$, $j$. For a $(\xi_{1},\dots,\xi_{r}) \in \mathbb{R}^{r}$, denote by ${\bf v}^{(\xi_{1},\dots,\xi_{r})}_{j}=\sum_{i=1}^{r} \xi_{i} \cdot \nu_{j}^{(i)}$. Finally, for $(m_{1},\dots,m_{r}) \in \mathbb{R}^{r}$, $S_{(m_{1},\dots,m_{r})}$ denotes the slice
\begin{align*}
\{(\nu_{1}^{(1)}, \dots, \nu_{n}^{(1)} \text{ ; } \dots \text{ ; } \nu_{1}^{(r)}, \dots, \nu_{n}^{(r)}) \in \mathbb{R}^{rn} \text{ } \lvert \text{ } \frac{\nu_{1}^{(1)}}{m_{1}}=\cdots=\frac{\nu_{1}^{(r)}}{m_{r}}, \dots, \text{ } \frac{\nu_{n}^{(1)}}{m_{1}}=\cdots=\frac{\nu_{n}^{(r)}}{m_{r}}\} \cong \mathbb{R}^{n}. 
\end{align*}
\end{notation}

\begin{Op} Concerning the organization of the paper, we begin in Section \ref{section:2} by defining the extended Okounkov bodies of big divisors. We observe their basic properties and their relationship with the Okounkov bodies. Section \ref{section:3} revolves around the variational theory of extended Okounkov bodies. Section \ref{section:4} is devoted to examples. We treat the case of curves, and provide possible descriptions of extended Okounkov bodies of big divisors on surfaces and on toric varieties. Section \ref{section:5} is the main part of this paper: the characterization of asymptotic base loci and the description of multi-weight moving Seshadri constants in terms of extended Okounkov bodies are given. Lastly, in Section \ref{section:6}, we observe volumes of slices of the extended infinitesimal Okounkov bodies, the relation between Nakayama constants and Seshadri constants, and the irrationality of Seshadri constants. 
\end{Op}

\begin{acknowledgements}
I want to express my gratitude to my advisor Yongnam Lee for his advice, encouragement and teaching. I also wish to thank Joaquim Ro\'e for his comments on Nagata's conjecture and multi-weight Seshadri constants, Atsushi Ito for correcting some of my misunderstandings about extended Okounkov bodies, and Dong-Hwi Seo for helpful discussions on proving Proposition \ref{prop:mono-graded}. This work was supported by NRF(National Research Foundation of Korea) Grant funded by the Korean Government(NRF-2016-Fostering Core Leaders of the Future Basic Science Program/Global Ph.D. Fellowship Program). 
\end{acknowledgements}

\end{section}

\begin{section} {Construction of the extended Okounkov body} \label{section:2}
Let $X$ be a normal projective variety of dimension $n$, and let $Y^{i}_{\bullet}:X=Y^{i}_{0} \supseteq Y^{i}_{1} \supseteq \cdots \supseteq Y^{i}_{n}=\{x_{i}\}$ admissible flags centered at $x_{i}$ with $r$ distinct points $x_{1}, \dots, x_{r}$. Furthermore, we always assume that $x_{i} \notin Y^{j}_{1}$ for each $i=1, \dots, r$ and all $j \neq i$. We begin by defining a function $\nu_{Y^{1}_{\bullet}, \dots, Y^{r}_{\bullet}}$ with respect to $Y^{1}_{\bullet}, \dots, Y^{r}_{\bullet}$. 

\begin{defn}
Let $D$ be a big line bundle on $X$. Given $0 \neq s \in H^{0}(X,\OO_{X}(D))$, we denote 
\begin{align*}
\nu_{1}^{(i)}(s)={\rm ord}_{Y^{i}_{1}}(s).
\end{align*}
After choosing local equations for $Y^{i}_{1}$'s  in $X$, $s$ determines a section 
\begin{align*}
\bar{s}_{1} \in H^{0}(X,\OO_{X}(D-\nu_{1}^{(1)}(s)Y^{1}_{1}- \cdots -\nu^{(r)}_{1}(s)Y^{r}_{1})).
\end{align*}
By restricting $\bar{s}_{1}$ to each $Y^{i}_{1}$, we get $r$ sections 
\begin{align*}
s^{(i)}_{1} \in H^{0}(Y_{1}^{i},\OO_{Y_{1}^{i}}({(D-\nu_{1}^{(1)}(s)Y^{1}_{1}- \cdots -\nu^{(r)}_{1}(s)Y^{r}_{1})|}_{Y_{1}^{i}})).
\end{align*}
For each $s_{1}^{(i)}$, we take  
\begin{align*}
\nu_{2}^{(i)}(s)={\rm ord}_{Y_{2}^{i}}(s_{1}^{(i)}).
\end{align*}
Repeating this process, we define a function $\nu_{Y^{1}_{\bullet}, \dots, Y^{r}_{\bullet}}$ with respect to $Y^{1}_{\bullet}, \dots, Y^{r}_{\bullet}$ as follows:
\begin{align*}
\nu_{Y^{1}_{\bullet}, \dots, Y^{r}_{\bullet}} : \text{ }H^{0}(X,\OO_{X}(D)) \rightarrow \mathbb{Z}^{nr} \cup \{\infty\}, \text{  } s \mapsto (\nu_{1}^{(1)}(s), \dots, \nu_{n}^{(1)}(s) \text{ ; } \dots \text{ ; } \nu_{1}^{(r)}(s), \dots, \nu_{n}^{(r)}(s)).
\end{align*}
\end{defn}

\begin{rmk} \label{rmk:separate}
Since $x_{i} \notin Y^{j}_{1}$ for each $i=1, \dots, r$ and all $j \neq i$, we may suppose that each $Y^{1}_{i}$ is mutually disjoint after replacing $X$ by an open set containing all of the $x_{i}$'s. Thus we have
\begin{align*}
H^{0}(Y_{1}^{i},\OO_{Y_{1}^{i}}({(D-\nu_{1}^{(1)}(s)Y^{1}_{1}- \cdots -\nu^{(r)}_{1}(s)Y^{r}_{1})|}_{Y_{1}^{i}}))=H^{0}(Y_{1}^{i},\OO_{Y_{1}^{i}}({(D-\nu_{1}^{(i)}(s)Y^{i}_{1})|}_{Y_{1}^{i}}))
\end{align*}
for all $i=1, \dots, r$ so that $\nu_{Y^{1}_{\bullet}, \dots, Y^{r}_{\bullet}}(s)=\nu_{Y^{1}_{\bullet}}(s) \times \dots \times \nu_{Y^{1}_{\bullet}}(s)$ for all $0 \neq s \in H^{0}(X,\OO_{X}(D))$, where $\nu_{Y^{i}_{\bullet}}$ is the valuation-like function attached to $Y^{i}_{\bullet}$ (\cite[Subsection 1.1]{LM}). 
\end{rmk}

\begin{note}
By the construction, $\nu_{Y^{1}_{\bullet}, \dots, Y^{r}_{\bullet}}$ shares the following properties:
\begin{enumerate}[(i)]
\item $\nu_{Y^{1}_{\bullet}, \dots, Y^{r}_{\bullet}}(s)=\infty$ if and only if $s=0$. 
\item Given non-zero sections $s \in H^{0}(X,\OO_{X}(D))$ and $t \in H^{0}(X, \OO_{X}(E))$, 
\begin{align*}
\nu_{Y^{1}_{\bullet}, \dots, Y^{r}_{\bullet}}(s \otimes t)=\nu_{Y^{1}_{\bullet}, \dots, Y^{r}_{\bullet}}(s)+\nu_{Y^{1}_{\bullet}, \dots, Y^{r}_{\bullet}}(t).
\end{align*} 
\end{enumerate}
\end{note}

For a subset $\Sigma \subseteq \mathbb{R}^{m}$, ${\rm Cone}(\Sigma) \subseteq \mathbb{R}^{m}$ is the closed convex cone spanned by $\Sigma$. The extended Okounkov body of $D$ is defined as the base of some closed convex cone:

\begin{defn}
\begin{enumerate}[(1)]
\item The extended graded semigroup of $D$ is the sub-semigroup 
\begin{align*}
\Gamma_{Y^{1}_{\bullet}, \dots, Y^{r}_{\bullet}}(D)=\{(\nu_{Y^{1}_{\bullet}, \dots, Y^{r}_{\bullet}}(s),m) \text{ }|\text{ } 0 \neq s \in H^{0}(X,\OO_{X}(mD)), \text{ } m \in \mathbb{N}\} 
\end{align*}
of $\mathbb{N}^{nr} \times \mathbb{N} \subseteq \mathbb{R}^{nr} \times \mathbb{R}$.
\item The extended Okounkov body of $D$ with respect to $Y^{1}_{\bullet}, \dots, Y^{r}_{\bullet}$ is the convex body 
\begin{align*}
\Delta_{Y^{1}_{\bullet}, \dots, Y^{r}_{\bullet}}(D)={\rm Cone}(\Gamma_{Y^{1}_{\bullet}, \dots, Y^{r}_{\bullet}}(D)) \cap (\mathbb{R}^{nr} \times \{1\}).
\end{align*}
\end{enumerate}
\end{defn}

As in \cite[Proposition 4.1]{LM} and \cite{J}, one has the numerical invariance and the homogeneity of $\Delta_{Y^{1}_{\bullet}, \dots, Y^{r}_{\bullet}}(-)$ immediately. This yields a natural  definition of $\Delta_{Y^{1}_{\bullet}, \dots, Y^{r}_{\bullet}}(D)$ for any big rational class $D$ on $X$, viz. $\Delta_{Y^{1}_{\bullet}, \dots, Y^{r}_{\bullet}}(D)=\frac{1}{m}\Delta_{Y^{1}_{\bullet}, \dots, Y^{r}_{\bullet}}(mD)$, where $mD$ is integral for some $m \in \mathbb{N}$.




One of the basic properties of $\Delta_{Y^{1}_{\bullet}, \dots, Y^{r}_{\bullet}}(D)$ is that the Okounkov bodies $\Delta_{Y^{i}_{\bullet}}(D)$ are appeared by its projections. This is the reason why we impose the condition that  $x_{i} \notin Y^{j}_{1}$ for each $i=1, \dots, r$ and all $j \neq i$. 

\begin{note} \label{note:projection}
\begin{enumerate}[(1)]
\item Let ${\rm pr}_{i}:\mathbb{R}^{nr} \rightarrow \mathbb{R}^{n}, \text { } (\nu_{1}^{(1)}, \dots, \nu_{n}^{(1)} \text{ ; } \dots \text{ ; } \nu_{1}^{(r)}, \dots \nu_{n}^{(r)}) \mapsto (\nu_{1}^{(i)}, \dots, \nu_{n}^{(i)})$ be a projection map for each $i=1, \dots, r$. Then $\Delta_{Y^{i}_{\bullet}}(D)={\rm pr}_{i}(\Delta_{Y^{1}_{\bullet}, \dots, Y^{r}_{\bullet}}(D))$ for all $i$. 
\item For each $i=1, \dots, r$, let $\Delta_{i}(D)=(0, \dots, 0) \times \dots \times \Delta_{Y^{i}_{\bullet}}(D) \times \dots \times (0, \dots, 0) \subset \mathbb{R}^{nr}$ for admissible flags $Y^{i}_{\bullet}$, where $(0, \dots, 0)$ is an $n$-tuple of zeroes. Then 
\begin{align*}
\Delta_{Y^{1}_{\bullet}, \dots, Y^{r}_{\bullet}}(D) \subseteq \Delta_{1}(D)+\dots +\Delta_{r}(D) \subseteq r \cdot {\rm Conv}(\bigcup_{i=1}^{r}\Delta_{i}(D)).
\end{align*} 
\end{enumerate}
\end{note}

From Note \ref{note:projection}-(1), it is natural to ask whehter 
\begin{align*}
\Delta_{i}(D)=(0, \dots, 0) \times \dots \times \Delta_{Y^{i}_{\bullet}}(D) \times \dots \times (0, \dots, 0) \subseteq \mathbb{R}^{nr}
\end{align*}
is contained in $\Delta_{Y^{1}_{\bullet}, \dots, Y^{r}_{\bullet}}(D)$ when $x_{1},\dots,x_{r} \notin {\rm \bf B}_{+}(D)$:

\begin{ques} \label{ques:general inclusion}
Let $D$ be a big $\mathbb{R}$-divisor on $X$, and let $Y^{1}_{\bullet}, \dots, Y^{r}_{\bullet}$ be admissible flags centered at $x_{1},\dots,x_{r} \notin {\rm \bf B}_{+}(D)$. For each $i=1,\dots,r$, write
\begin{align*}
\Delta_{i}(D)=(0, \dots, 0) \times \dots \times \Delta_{Y^{i}_{\bullet}}(D) \times \dots \times (0, \dots, 0) \subseteq \mathbb{R}^{nr}.
\end{align*}
Then, would the inclusion
\begin{align*}
\Delta_{i}(D) \subseteq \Delta_{Y^{1}_{\bullet},\dots,Y^{r}_{\bullet}}(D)
\end{align*}
be established for all $i=1,\dots,r$?
\end{ques}

See Section \ref{section:3} for the definition of $\Delta_{Y^{1}_{\bullet},\dots,Y^{r}_{\bullet}}(D)$ of a big $\mathbb{R}$-divisor $D$. When $x_{1},\dots,x_{r}$ are very general, we have a positive answer.  

\begin{prop} \label{prop:inclusion 2}
For a big $\mathbb{R}$-divisor $D$ on $X$ and any admissible flags $Y^{1}_{\bullet},\dots,Y^{r}_{\bullet}$ centered at very general points $x_{1},\dots,x_{r} \in X$, we have
\begin{align*}
{\rm Conv}(\bigcup_{i=1}^{r}\Delta_{i}(D)) \subseteq \Delta_{Y^{1}_{\bullet},\dots,Y^{r}_{\bullet}}(D).
\end{align*}
\end{prop}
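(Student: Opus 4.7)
The plan is first to use the convexity of $\Delta_{Y^1_\bullet,\dots,Y^r_\bullet}(D)$ to reduce the statement to the individual inclusions $\Delta_i(D)\subseteq \Delta_{Y^1_\bullet,\dots,Y^r_\bullet}(D)$ for $i=1,\dots,r$; by symmetry it suffices to treat $i=1$. Since both sides are closed, I reduce further to showing that every rational interior point $v$ of $\Delta_{Y^1_\bullet}(D)$ has $(v;0;\dots;0)\in \Delta_{Y^1_\bullet,\dots,Y^r_\bullet}(D)$. Combined with Remark \ref{rmk:separate} and the definition of the extended graded semigroup, this amounts to producing, for $m$ sufficiently divisible with $mv\in\mathbb{Z}^n$, a section $s_m\in H^0(X,\OO_X(mD))$ satisfying $\nu_{Y^1_\bullet}(s_m)=mv$ and $s_m(x_j)\neq 0$ for every $j\neq 1$; the latter condition forces $\nu_{Y^j_\bullet}(s_m)=0$, yielding $\nu_{Y^1_\bullet,\dots,Y^r_\bullet}(s_m)=(mv;0;\dots;0)$.

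For the construction I work with the lexicographic filtration
\[
W_m^{\ge mv}:=\{s\in H^0(X,\OO_X(mD)):\nu_{Y^1_\bullet}(s)\ge_{\mathrm{lex}} mv\}\ \supseteq\ W_m^{>mv}:=\{s:\nu_{Y^1_\bullet}(s)>_{\mathrm{lex}} mv\}.
\]
The interiority of $v$ guarantees, for sufficiently divisible $m$, the existence of $s_0\in W_m^{\ge mv}\setminus W_m^{>mv}$ with $\nu_{Y^1_\bullet}(s_0)=mv$ exactly, and every element of the affine subspace $s_0+W_m^{>mv}$ retains this valuation. A direct check identifies the common zero locus of this coset as $\{s_0=0\}\cap \mathrm{Bs}(W_m^{>mv})$; hence provided $x_j\notin \mathrm{Bs}(W_m^{>mv})$ for each $j\neq 1$, a generic representative of the coset supplies the desired $s_m$.

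To verify the base-locus condition I use the inclusion $H^0(X,\OO_X(mD-(mv_1+1)Y^1_1))\subseteq W_m^{>mv}$, so that $\mathrm{Bs}(W_m^{>mv})\subseteq \mathrm{Bs}|mD-(mv_1+1)Y^1_1|$; for large $m$ the latter sits inside $\mathbf{B}_+(D-v_1 Y^1_1)$. Interiority of $v$ implies that $v_1$ is strictly below the pseudo-effective threshold of $D-tY^1_1$, so $D-v_1 Y^1_1$ is big and its augmented base locus is a proper closed subvariety of $X$. The principal obstacle will be that the class $[Y^1_1]$ depends on the flag---an uncountable parameter---while a very-general choice of $(x_1,\dots,x_r)$ can only exclude a countable union of proper closed subvarieties. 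I anticipate resolving this by dominating the flag-dependent augmented base loci by flag-independent, intrinsic ones attached to $D$ and the points: for instance, passing to the blow-up at $x_1$ and bounding the $\mathbf{B}_+(D-v_1 Y^1_1)$'s (as $Y^1_1$ varies over smooth divisors through $x_1$) in terms of the augmented base loci of $\pi_1^*D-tE_1$ for rational $t$, which depend only on $D$ and $x_1$. Once $(x_1,\dots,x_r)$ is chosen very generally outside this countable family of proper subvarieties, the construction of the previous paragraph delivers $s_m$ as required, completing the proof.
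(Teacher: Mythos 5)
Your plan follows essentially the same route as the paper's: reduce by convexity to the single inclusions $\Delta_i(D)\subseteq\Delta_{Y^1_\bullet,\dots,Y^r_\bullet}(D)$, realize rational interior points of $\Delta_{Y^i_\bullet}(D)$ by sections, and choose $x_1,\dots,x_r$ very general so that those sections do not vanish at the other centres. Where the paper simply quotes the valuativeness of interior rational points (\cite{D}, \cite[Lemma 1.8]{KL1507}) and then appeals to the countability of the resulting family of sections, your lexicographic filtration $W_m^{\ge mv}\supseteq W_m^{>mv}$ and the coset $s_0+W_m^{>mv}$ reconstruct such a section explicitly, and the identification of the common zero locus of the coset with $\{s_0=0\}\cap\mathrm{Bs}(W_m^{>mv})$ is correct. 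You are also right to flag the point the paper passes over: $\mathrm{Bs}(W_m^{>mv})$ depends on the flag $Y^1_\bullet$, a continuous parameter, whereas ``very general'' can only exclude a countable union of proper closed subsets of $X^r$ that are intrinsic to $D$ and the points.

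The step you propose to close this, however, does not work as stated. The inclusion $\mathrm{Bs}\,|mD-(mv_1+1)Y^1_1|\subseteq\mathbf{B}_+(D-v_1Y^1_1)$ for $m\gg 0$ does not follow from the standard characterization $\mathbf{B}_+(L)=\mathrm{Bs}\,|mL-A|$ ($m\gg 0$), because that formula requires $A$ ample; $Y^1_1$ is merely a prime divisor through $x_1$, and subtracting a small non-ample class can enlarge the stable base locus strictly beyond $\mathbf{B}_+(L)$. The last step you announce --- dominating the $Y^1_1$-dependent loci by augmented base loci of $\pi_1^*D-tE_1$ on the blow-up at $x_1$, which would be flag-independent and countable in $t$ --- is a plausible idea but is only sketched, and $\mathrm{ord}_{Y^1_1}$ is not controlled by $\mathrm{ord}_{E_1}$ for a general admissible (non-infinitesimal) flag, so the domination is not automatic. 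In short: same approach as the paper, with a welcome increase in explicitness and a correct diagnosis of the subtle flag-dependence issue, but the base-locus bound and the flag-independence reduction are both left with gaps.
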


\begin{proof}
First, we assume that $D$ is a big $\mathbb{Q}$-divisor. Fix infintesimal flags $Y^{1}_{\bullet}, \dots, Y^{r}_{\bullet}$ centered $x_{1},\dots,x_{r}$. \cite[Corollary 1.2]{D} or \cite[Lemma 1.8]{KL1507} gives that all interior points of $\Delta_{Y^{i}_{\bullet}}(D) \cap \mathbb{Q}^{n}$ are valuative (\cite[Definition 1.7]{KL1507}). For each $i=1,\dots,r$, there are countably many sections $s^{(j)}_{i} \in H^{0}(\tilde{X},\OO_{\tilde{X}}(m_{j}D))$ for some $m_{j} \in \mathbb{N}$, depending on $j \in \mathbb{N}$ whose valuations along $Y^{i}_{\bullet}$ form all of the interior points of $\Delta_{Y^{i}_{\bullet}}(D) \cap \mathbb{Q}^{n}$. Since $x_{1},\dots,x_{r}$ are very general, we may assume that for each $i=1, \dots, r$ and all $j \in \mathbb{N}$, the zero loci of $s_{i}^{(j)}$ do not contain all of the $x_{k}$'s with $k \neq i$, that is, for each $i=1,\dots, r$, we have $\Delta_{i}(D) \subseteq \Delta_{Y^{1}_{\bullet},\dots,Y^{r}_{\bullet}}(D)$. Since $\Delta_{Y^{1}_{\bullet},\dots,Y^{r}_{\bullet}}(D)$ is convex, the conclusion holds for big $\mathbb{Q}$-divisors. 

The big $\mathbb{R}$-divisor cases follow immediately by the above arguments and the continuity of extended Okounkov bodies (Theorem \ref{thm:global}). 
\end{proof}

\end{section}

\begin{section} {Variation of extended Okounkov bodies} \label{section:3}
This section is devoted to the proof of Theorem \ref{thm:global}: we construct a closed convex cone 
\begin{align*}
\Delta_{Y^{1}_{\bullet},\dots, Y^{r}_{\bullet}}(X) \subseteq \mathbb{R}^{nr} \times {{\rm N^{1}}(X)}_{\mathbb{R}}
\end{align*}
such that the fiber of $\Delta_{Y^{1}_{\bullet},\dots, Y^{r}_{\bullet}}(X)$ over any big rational class $D$ on $X$ is $\Delta_{Y^{1}_{\bullet},\dots, Y^{r}_{\bullet}}(D)$. 

\begin{proof}[Proof of Theorem \ref{thm:global}]
Let $\rho(X)={\rm dim}_{\mathbb{R}}{{\rm N^{1}}(X)}_{\mathbb{R}}$, and fix divisors $D_{1}, \dots, D_{\rho(X)}$ on $X$ whose classes form a $\mathbb{Z}^{\rho(X)}$-basis of ${\rm N^{1}}(X)$ and $\overline{\rm Eff}(X)$ lies in the positive orthant of $\mathbb{R}^{\rho(X)}$. We define the multigraded semigroup of $X$ with respect to $Y^{1}_{\bullet}, \dots, Y^{r}_{\bullet}$ to be the additive sub-semigroup of $\mathbb{N}^{nr} \times \mathbb{N}^{\rho(X)}$ given by
\begin{align*}
\Gamma_{Y^{1}_{\bullet},\dots, Y^{r}_{\bullet}}(X;D_{1},\dots,D_{\rho(X)}) \overset{\rm def}{=} \{&(\nu_{Y^{1}_{\bullet},\dots, Y^{r}_{\bullet}}(s), a_{1},\dots,a_{\rho(X)}) \in \mathbb{N}^{nr} \times \mathbb{N}^{\rho(X)} \text{ } |\\
&0 \neq s \in H^{0}(X,\OO_{X}(a_{1}D_{1}+\dots+a_{\rho(X)}D_{\rho(X)}))\}.
\end{align*}
Then we simply take 
\begin{align*}
\Delta_{Y^{1}_{\bullet},\dots, Y^{r}_{\bullet}}(X) \overset{\rm def}{=} {\rm Cone}(\Gamma_{Y^{1}_{\bullet},\dots, Y^{r}_{\bullet}}(X;D_{1},\dots,D_{\rho(X)})) \subseteq \mathbb{R}^{nr} \times {{\rm N^{1}}(X)}_{\mathbb{R}}.
\end{align*}
By its construction, everything is clear except that ${\rm pr}_{2}^{-1}(D) \cap \Delta_{Y^{1}_{\bullet},\dots, Y^{r}_{\bullet}}(X)=\Delta_{Y^{1}_{\bullet},\dots, Y^{r}_{\bullet}}(D)$ for any big $\mathbb{Q}$-divisor $D$ on $X$. First, we focus on the case when $D$ is integral. The main ingredient of the proof is \cite[Proposition 4.9]{LM}. 

We claim that $\Gamma(X)=\Gamma(X;D_{1},\dots,D_{\rho(X)})$ generates $\mathbb{Z}^{nr}\times \mathbb{Z}^{\rho(X)}$ as a group. Since $D_{1}, \dots, D_{\rho(X)}$ generate $\mathbb{Z}^{\rho(X)}$, it is enough to show that for any big divisor $D=D_{i}$ with $i=1, \dots, \rho(X)$, $\Gamma_{Y^{1}_{\bullet},\dots, Y^{r}_{\bullet}}(D) \subset \mathbb{N}^{nr+1}$ generates $\mathbb{Z}^{nr+1}$ as a group. Fix such a big divisor $D=D_{i}$. As in the proof of \cite[Lemma 2.2]{LM}, we can write $D=A_{1}-B_{1}$ as the difference of two very ample divisors such that (by adding a further very ample divisor to both $A_{1}$ and $B_{1}$, if necessary)  
\begin{align*}
\nu_{Y^{1}_{\bullet},\dots, Y^{r}_{\bullet}}(s^{(1)}_{0})=\nu_{Y^{1}_{\bullet},\dots, Y^{r}_{\bullet}}(t^{(1)}_{0})&=(0,\dots,0 \text{ ; } \dots \text{ ; } 0, \dots, 0) \in \mathbb{R}^{nr}, \\
(\nu_{1}^{(1)}(t_{i}^{(1)}), \dots, \nu_{n}^{(1)}(t_{i}^{(1)}))&={\bf e}_{i} \in \mathbb{R}^{n}
\end{align*}
for $s^{(1)}_{0} \in H^{0}(X,\OO_{X}(A_{1}))$ and $t^{(1)}_{i} \in H^{0}(X,\OO_{X}(B_{1}))$ with $i=0, \dots, n$, where ${\bf e}_{i} \in \mathbb{Z}^{n}$ is the $i$-th standard basis vector and $\nu_{j}^{(1)}(-)$ is as in the definition of $\nu_{Y^{1}_{\bullet},\dots, Y^{r}_{\bullet}}(-)$. Since $A_{1}, B_{1}$ are sufficiently positive very ample divisors, there are sufficiently many sections satisfying the above properties by Bertini theorem. Since not passing through the other points $x_{2},\dots,x_{r}$ is just an open condition, we may further assume that $x_{2},\dots,x_{r}$ are not contained in any zero loci of $t^{(1)}_{i}$, i.e. for each $i=0, \dots, n$, 
\begin{align*}
\nu_{Y^{1}_{\bullet},\dots, Y^{r}_{\bullet}}(t^{(1)}_{i})=({\bf e}_{i} \text{ ; } 0, \dots,0 \text{ ; } \dots \text{ ; } 0, \dots, 0).
\end{align*}
Similarly, we can construct very ample divisors $A_{1},\dots,A_{r},B_{1},\dots,B_{r}$ with sections $s^{(j)}_{0},t^{(j)}_{i}$ for each $j=1,\dots, r$ and $i=0,\dots,n$ such that 
\begin{align*}
\nu_{Y^{1}_{\bullet},\dots, Y^{r}_{\bullet}}(s^{(j)}_{0})=\nu_{Y^{1}_{\bullet},\dots, Y^{r}_{\bullet}}(t^{(j)}_{0})&=(0,\dots,0 \text{ ; } \dots \text{ ; } 0, \dots, 0) \in \mathbb{R}^{nr}, \\
\nu_{Y^{1}_{\bullet},\dots, Y^{r}_{\bullet}}(t^{(j)}_{i})&={\bf e}^{(j)}_{i} \in \mathbb{R}^{nr},
\end{align*}
where ${\bf e}^{(j)}_{i}$ is the $((j-1)n+i)$-th standard basis vector of $\mathbb{Z}^{nr}$. \

Since $D$ is big, there eixsts $m_{0} \in \mathbb{Z}$ such that $mD-B_{j}$ is linearly equivalent to an effective divisor $F_{m}^{(j)}$ for all $m \ge m_{0}$ and $j=1, \dots, r$, i.e. $mD=B_{j}+F^{(j)}_{m}$. If $f^{(j)}_{m} \in \mathbb{Z}^{nr}$ is the valuation vector of a section defining $F^{(j)}_{m}$, then the valuative property of $\nu_{Y^{1}_{\bullet},\dots, Y^{r}_{\bullet}}$ with the valuations of $t^{(j)}_{i}$ gives that 
\begin{align*}
(f^{(j)}_{m},m), (f^{(j)}_{m}+{\bf e}^{(j)}_{1},m), \dots, (f^{(j)}_{m}+{\bf e}^{(j)}_{n},m) \in \Gamma_{Y^{1}_{\bullet},\dots, Y^{r}_{\bullet}}(D)
\end{align*}
for each $j=1,\dots, r$. Moreover, $(m+1)D=A_{j}+F_{m}^{(j)}$ and so $\Gamma_{Y^{1}_{\bullet},\dots, Y^{r}_{\bullet}}(D)$ also contains $(f_{m}^{(j)}, m+1)$. Hence $\Gamma_{Y^{1}_{\bullet},\dots, Y^{r}_{\bullet}}(D) \subset \mathbb{N}^{nr+1}$ generates $\mathbb{Z}^{nr+1}$ as a group, that is, $\Gamma(X)$ generates $\mathbb{Z}^{nr}\times \mathbb{Z}^{\rho(X)}$ as a group, which proves the claim. 

Since ${\rm Supp}(\Gamma(X))$ (see \cite[Proposition 4.9]{LM} for a definition) is the closed convex cone spanned by $(a_{1}, \dots, a_{\rho(X)}) \in \mathbb{Z}^{\rho(X)}$ such that $H^{0}(X,\OO_{X}(a_{1}D_{1}+\dots+a_{\rho(X)}D_{\rho(X)})) \neq 0$, $(a_{1}, \dots, a_{\rho(X)})$ lies in the interior of ${\rm Supp}(\Gamma(X))$ if and only if $\OO_{X}(a_{1}D_{1}+\dots+a_{\rho(X)}D_{\rho(X)})$ is big. For such a vector ${\bf a}=(a_{1},\dots,a_{\rho(X)}) \in \mathbb{Z}^{\rho(X)}$, we can apply \cite[Proposition 4.9]{LM} so that
\begin{align*}
\Delta_{Y^{1}_{\bullet},\dots, Y^{r}_{\bullet}}(a_{1}D_{1}+\dots+a_{\rho(X)}D_{\rho(X)})&={\rm Cone}(\Gamma(X) \cap (\mathbb{N}^{nr} \times \mathbb{N} \cdot {\bf a})) \cap (\mathbb{R}^{nr} \times \{ {\bf a} \}) \\
&={\rm Cone}(\Gamma(X)) \cap (\mathbb{R}^{nr} \times \{ {\bf a} \}) \\
&=\Delta_{Y^{1}_{\bullet},\dots, Y^{r}_{\bullet}}(X) \cap (\mathbb{R}^{nr} \times \{ {\bf a} \}) \\
&=\Delta_{Y^{1}_{\bullet},\dots, Y^{r}_{\bullet}}(X) \cap {\rm pr}_{2}^{-1}(a_{1}D_{1}+\dots+a_{\rho(X)}D_{\rho(X)}),
\end{align*}
which verifies the theorem for any big integral divisors on $X$. The case of rational classes follows since both $\Delta_{Y^{1}_{\bullet},\dots, Y^{r}_{\bullet}}(D)$ and ${\rm pr}_{2}^{-1}(D) \cap \Delta_{Y^{1}_{\bullet},\dots, Y^{r}_{\bullet}}(X)$ scale linearly with $D$. 
\end{proof}

We refer to $\Delta_{Y^{1}_{\bullet},\dots, Y^{r}_{\bullet}}(X)$ in Theorem \ref{thm:global} as the global extended Okounkov body of $X$ with respect to $Y^{1}_{\bullet},\dots, Y^{r}_{\bullet}$.

\begin{rmk} \label{rmk:R-divisor}
For any $D, D' \in {{\rm Big}(X)}_{\mathbb{R}}$, 
\begin{enumerate}[(1)]
\item $\Delta_{Y^{1}_{\bullet},\dots, Y^{r}_{\bullet}}(D) \overset{\rm def}{=} \Delta_{Y^{1}_{\bullet},\dots, Y^{r}_{\bullet}}(X) \cap {\rm pr}_{2}^{-1}(D)$, and
\item $\Delta_{Y^{1}_{\bullet},\dots, Y^{r}_{\bullet}}(D)+\Delta_{Y^{1}_{\bullet},\dots, Y^{r}_{\bullet}}(D') \subseteq \Delta_{Y^{1}_{\bullet},\dots, Y^{r}_{\bullet}}(D+D')$.
\end{enumerate}
\end{rmk}

\begin{section}{Descriptions of extended Okounkov bodies} \label{section:4}
This section is devoted to some examples and computations. We start with curves. Let $L$ be a big $\mathbb{R}$-divisor on a smooth curve $C$, and let $Y^{i}_{\bullet}:C \supseteq \{p_{i}\}$ be admissible flags for $i=1,\dots,r$. Then it is easy to see that 
\begin{align*}
\Delta_{Y^{1}_{\bullet},\dots,Y^{r}_{\bullet}}(L)=\{a_{1}e_{1}+\cdots+a_{r}e_{r} \in \mathbb{R}^{r} \text{ }|\text{ } \sum_{i=1}^{r}a_{i} \le {\rm deg}L, \text{ } a_{i} \ge 0 \text{ for all } i=1,\dots, r \},
\end{align*}
where $e_{i}$ is the $i$-th standard basis vector. 

\begin{subsection}{Surfaces} \label{subsection:surfaces}
We observe a possible description on surface cases. Let $D$ be a big $\mathbb{R}$-divisor on a smooth projective surface $S$, and let $Y^{i}_{\bullet}:X \supseteq C_{i} \supseteq \{p_{i}\}$ be admissible flags, where $p_{i} \notin C_{j}$ for all $i \neq j$. For each $t_{1},\dots,t_{r} \in \mathbb{R}_{\ge 0}$, where $D-\sum_{i=1}^{r}t_{i}C_{i}$ is big, we write 
\begin{align*}
D-\sum_{i=1}^{r}t_{i}C_{i}=P_{t_{1},\dots,t_{r}}+N_{t_{1},\dots,t_{r}}
\end{align*}
for its Zariski decomposition. We set
\begin{align*}
\Delta=\{(\nu_{1}^{(1)},\nu_{2}^{(1)}; \cdots ; \nu_{1}^{(r)},\nu_{2}^{(r)}) \in \mathbb{R}^{2r} \text{ } | \text{ } &D-\sum_{i=1}^{r}(s_{i}+\nu_{1}^{(i)})C_{i} \text{ is big, } \alpha_{i}(\nu_{1}^{(1)},\dots,\nu_{1}^{(r)}) \le \nu_{2}^{(i)} \le \beta_{i}(\nu_{1}^{(1)},\dots,\nu_{1}^{(r)}) \\
&\text{ for each } i=1,\dots,r\},
\end{align*}
where $\alpha_{i}(\nu_{1}^{(1)},\dots,\nu_{1}^{(r)})={\rm ord}_{p_{i}}({N_{s_{1}+\nu_{1}^{(1)},\dots,s_{r}+\nu_{1}^{(r)}}|}_{C_{i}})$ and $\beta_{i}(\nu_{1}^{(1)},\dots,\nu_{1}^{(r)})=\alpha_{i}(\nu_{1}^{(1)},\dots,\nu_{1}^{(r)})+(P_{s_{1}+\nu_{1}^{(1)},\dots,s_{r}+\nu_{1}^{(r)}}.C_{i})$ for each $i=1,\dots,r$. Note that $\alpha_{i}(\nu_{1}^{(1)},\dots,\nu_{1}^{(r)})$ and $\beta_{i}(\nu_{1}^{(1)},\dots,\nu_{1}^{(r)})$ are piecewise linear (cf. \cite{BKS}). 

\begin{prop} \label{prop:surfaces}
With the above notation, the following inclusion holds:
\begin{align*}
\Delta_{Y^{1}_{\bullet},\dots,Y^{r}_{\bullet}}(D) \subseteq (s_{1},0; \cdots ;s_{r},0)+\overline{\Delta},
\end{align*}
where $s_{i}={\rm ord}_{C_{i}}(N_{0,\dots,0})$ and $\overline{\Delta}$ denotes its closure. 
\end{prop}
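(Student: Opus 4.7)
The plan is to verify the stated inclusion on a dense collection of rational valuative points of $\Delta_{Y^1_\bullet,\ldots,Y^r_\bullet}(D)$ and then extend to the whole body by continuity. Since both sides scale linearly with $D$ and $\overline{\Delta}$ is closed, I would reduce to the case of integral big $D$ and check that, for every nonzero $\sigma \in H^0(S,\OO_S(mD))$ with $m$ sufficiently divisible (which can be arranged by replacing $\sigma$ with a suitable power), the rescaled vector $\tfrac{1}{m}\nu_{Y^1_\bullet,\ldots,Y^r_\bullet}(\sigma) = (a_1^{(1)}, a_2^{(1)}; \ldots ; a_1^{(r)}, a_2^{(r)})$ lies in $(s_1,0;\ldots;s_r,0) + \overline{\Delta}$. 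Setting $\vec{t}=(a_1^{(1)},\ldots,a_1^{(r)})$ and $\nu_1^{(i)}:=a_1^{(i)}-s_i$, this reduces to three assertions: $\nu_1^{(i)}\geq 0$, pseudo-effectivity of $D-\sum_i a_1^{(i)}C_i$, and the interval inequalities $\alpha_i(\nu_1^{(1)},\ldots,\nu_1^{(r)})\leq a_2^{(i)}\leq \beta_i(\nu_1^{(1)},\ldots,\nu_1^{(r)})$ for every $i$.

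First I would dispose of the positivity requirements. Because $\sigma$ cuts out an effective divisor $D_\sigma \sim mD$ with $\mathrm{ord}_{C_i}(D_\sigma)=ma_1^{(i)}$, the divisor $D-\sum_i a_1^{(i)}C_i$ is $\mathbb{Q}$-effective, hence pseudo-effective. Since $mN_{0,\ldots,0}$ lies in the fixed part of $|mD|$ once $m$ is divisible enough, $\sigma$ must vanish along $C_i$ to order at least $m s_i$, yielding $a_1^{(i)}\geq s_i$.

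The heart of the proof is the two-sided estimate on $a_2^{(i)}$. Let $\bar{\sigma}_1 \in H^0(S,\OO_S(mD-\sum_i m a_1^{(i)} C_i))$ be the section produced by dividing out local equations of the $C_j$, so that $\mathrm{ord}_{C_j}(\bar{\sigma}_1)=0$ for every $j$. For the Zariski decomposition $D-\sum_i a_1^{(i)}C_i = P_{\vec{t}} + N_{\vec{t}}$, the fixed part of $|m(D-\sum_i a_1^{(i)}C_i)|$ equals $mN_{\vec{t}}$, and the condition $\mathrm{ord}_{C_i}(\bar{\sigma}_1)=0$ rules out $C_i$ being a component of $N_{\vec{t}}$, making $N_{\vec{t}}|_{C_i}$ a genuine effective divisor on $C_i$. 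Writing the effective divisor of $\bar{\sigma}_1$ as $mN_{\vec{t}}+M$ with $M$ effective and $M\equiv mP_{\vec{t}}$, restriction to $C_i$ produces
\begin{align*}
m a_2^{(i)} = \mathrm{ord}_{p_i}\bigl((mN_{\vec{t}})|_{C_i}\bigr) + \mathrm{ord}_{p_i}(M|_{C_i}).
\end{align*}
The first summand equals $m\alpha_i(\nu_1^{(1)},\ldots,\nu_1^{(r)})$, which is the desired lower bound. For the upper bound, I would invoke the elementary inequality $\mathrm{ord}_{p_i}(M|_{C_i})\leq \deg(M|_{C_i}) = M\cdot C_i = m P_{\vec{t}}\cdot C_i$, yielding $m a_2^{(i)}\leq m(\alpha_i+P_{\vec{t}}\cdot C_i)=m\beta_i(\nu_1^{(1)},\ldots,\nu_1^{(r)})$.

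The main technical obstacle I anticipate is ensuring that $\alpha_i,\beta_i$ are well-defined beyond the valuative rational points, since these formulas only make sense outside the locus where some $C_i$ is a component of $N_{\vec{t}}$. The argument above sidesteps this locus at every valuative rational point, and the remaining points of $\Delta_{Y^1_\bullet,\ldots,Y^r_\bullet}(D)$ are absorbed into $\overline{\Delta}$ by combining density with the continuity of extended Okounkov bodies furnished by Theorem~\ref{thm:global} together with the piecewise linearity of the Zariski decomposition on the big cone (cf.\ \cite{BKS}).
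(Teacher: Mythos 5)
Your proof is correct, but it follows a genuinely different route from the paper's. The paper first invokes the reasoning of Lemma~\ref{lem:slice} to shift $\Delta_{Y^{1}_{\bullet},\dots,Y^{r}_{\bullet}}(D)$ by $(s_{1},0;\dots;s_{r},0)$, reducing to the case where no $C_{i}$ occurs in the negative part, and then fixes the coordinates $\nu_{1}^{(i)}=\alpha^{(i)}$ for $i\geq 2$, uses Remark~\ref{rmk:separate} to embed the resulting slice into the single-flag Okounkov body $\Delta_{Y^{1}_{\bullet}}(D-s_{1}C_{1}-\sum_{i\geq 2}(s_{i}+\nu_{1}^{(i)})C_{i})$, and quotes the Lazarsfeld--Musta\c t\v a description of surface Okounkov bodies (\cite[Theorem 6.4]{LM}) to obtain the two-sided bound on $\nu_{2}^{(1)}$, iterating over the other indices. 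You instead argue directly at the level of sections: divide a generic $\sigma\in H^{0}(S,\OO_{S}(mD))$ by the local equations of the $C_{i}$, decompose $\mathrm{div}(\bar{\sigma}_{1})=mN_{\vec{t}}+M$ via the Zariski decomposition, and extract all $r$ inequalities simultaneously from $\mathrm{ord}_{p_{i}}\bigl((mN_{\vec{t}})|_{C_{i}}\bigr)=m\alpha_{i}$ and the degree inequality $\mathrm{ord}_{p_{i}}(M|_{C_{i}})\leq (M.C_{i})=m(P_{\vec{t}}.C_{i})$. In effect you reprove the content of \cite[Theorem 6.4]{LM} in place rather than citing it; this is a bit longer but is cleaner in that it avoids the slice-and-iterate manoeuvre and handles all indices at once. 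One small imprecision worth fixing: you assert the fixed part of $|m(D-\sum_{i}a_{1}^{(i)}C_{i})|$ \emph{equals} $mN_{\vec{t}}$, but it may be strictly larger since $P_{\vec{t}}$ need not be free; fortunately your argument only uses the containment $mN_{\vec{t}}\leq\mathrm{div}(\bar{\sigma}_{1})$, which does hold for $m$ divisible enough that $mN_{\vec{t}}$ is integral. Also note that your argument, as written, only places valuative points $\tfrac{1}{m}\nu(\sigma)$ with $D-\sum a_{1}^{(i)}C_{i}$ \emph{big} into the shifted $\Delta$; for valuative points on the boundary where this divisor is merely pseudo-effective one lands only in $\overline{\Delta}$, but that is exactly what the statement requires, and your closing density-and-closure remark covers this case.
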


\begin{proof}
The proof of Lemma \ref{lem:slice} implies that
\begin{align*}
\Delta_{Y^{1}_{\bullet},\dots,Y^{r}_{\bullet}}(D) = (s_{1},0; \cdots ;s_{r},0)+\Delta_{Y^{1}_{\bullet},\dots,Y^{r}_{\bullet}}(D-s_{1}C_{1}-\dots -s_{r}C_{r})
\end{align*}
sicne the Zariski decomposition of $D$ is $P_{0,\dots,0}+N_{0,\dots,0}$ and $s_{i}={\rm ord}_{C_{i}}(N_{0,\dots,0})$ for all $i=1,\dots,r$. Note that any $C_{i}$'s do not appear in the negative part of $D-s_{1}C_{1}-\dots-s_{r}C_{r}$. \

It remains to show that $\Delta_{Y^{1}_{\bullet},\dots,Y^{r}_{\bullet}}(D-s_{1}C_{1}-\dots -s_{r}C_{r}) \subseteq \overline{\Delta}$. For a sufficiently large and divisible $m \gg 0$, let $\nu_{1}^{(i)}=\alpha^{(i)}$ for all $2 \le i \le r$ such that $mD-ms_{1}C_{1}-\sum_{i=2}^{r}(ms_{i}+m\alpha^{(i)})C_{i}$ is big and integral. Since all the sections in $H^{0}(X,\OO_{X}(mD))$, vanishing along $C_{1}$ with multiplicity $\ge ms_{1}$ and vanishing along $C_{i}$ with multiplicity exactly $ms_{i}+m\alpha^{(i)}$ for all $i \ge 2$, are contained in $H^{0}(X,\OO_{X}(mD-ms_{1}C_{1}-\sum_{i=2}^{r}(ms_{i}+m\alpha^{(i)})C_{i}))$ naturally, we have:
\begin{align*}
{\Delta_{Y^{1}_{\bullet},\dots,Y^{r}_{\bullet}}(D-s_{1}C_{1}-\sum_{i=2}^{r}(s_{i}+\nu_{1}^{(i)})C_{i})}_{\nu_{1}^{(i)}=\alpha^{(i)} \text{ for all } i \ge 2} \subseteq \Delta_{Y^{1}_{\bullet}}(D-s_{1}C_{1}-\sum_{i=2}^{r}(s_{i}+\nu_{1}^{(i)})C_{i})
\end{align*}
by Remark \ref{rmk:separate}. The above assertion with \cite[Theorem 6.4]{LM} implies that $\nu_{2}^{(1)}$ satisfies the inequality $\alpha_{1}(\nu_{1}^{(1)},\dots,\nu_{1}^{(r)}) \le \nu_{2}^{(1)} \le \beta_{1}(\nu_{1}^{(1)},\dots,\nu_{1}^{(r)})$. By repeating the same process for $\nu_{2}^{(i)}$ for other $i \neq 1$, we conclude that 
\begin{align*}
\Delta_{Y^{1}_{\bullet},\dots,Y^{r}_{\bullet}}(D-s_{1}C_{1}-\dots -s_{r}C_{r}) \subseteq \overline{\Delta}. 
\end{align*}
\end{proof}

\begin{rmk} \label{rmk:surfaces}
We expect that two convex bodies in Proposition \ref{prop:surfaces} are actually the same. 
\end{rmk}

\end{subsection}

\begin{subsection}{Toric varieties}
We describe the extended Okounkov bodies of toric varieties with respect to certain flags. The key point for its proof is the technique used in \cite[Subsection 6.1]{LM}. 

We start by fixing some notation. Let $X_{\Sigma}$ be a smooth projective toric variety induced by a fan $\Sigma \subseteq N_{\mathbb{R}} \cong \mathbb{R}^{n}$. Let $D=\sum_{\rho \text{ : ray}} a_{\rho}D_{\rho}$ be a torus invariant divisor on $X_{\Sigma}$ and write
\begin{align*}
P_{D}=\{ m \in M_{\mathbb{R}} \text{ } | \text{ } <m,u_{\rho}> \ge -a_{\rho} \text{ for all rays } \rho \in \Sigma \},
\end{align*}
where $M$ is the dual lattice of $N$ and $u_{\rho}$ is the ray generator of $\rho$. Note that $P_{mD}=mP_{D}$ for $m \in \mathbb{Z}_{>0}$ and $H^{0}(X_{\Sigma},\OO_{X_{\Sigma}}(D))=\bigoplus\limits_{m \in P_{D} \cap M} \mathbb{C} \cdot \chi^{m}$ by \cite[Proposition 4.3.3]{CLS}, where $\chi^{m}$ is the character map associated to $m$. 

Suppose that $\sigma_{1},\dots,\sigma_{r} \in \Sigma$ are maximal cones such that $\sigma_{1} \cap \cdots \cap \sigma_{r}=\{0\}$. (This is for the assumption on the flags needed to define the extended Okounkov bodies). For such $\sigma_{1},\dots,\sigma_{r}$, define the flags $Y^{i}_{\bullet}:X_{\Sigma}=Y^{i}_{0} \supseteq Y_{1}^{i} \supseteq \cdots \supseteq Y_{n}^{i}$ consisting of torus invariant subvarieties of $X_{\Sigma}$: For each $i=1,\dots,r$, we can order the prime torus invariant divisors $D_{1}^{(i)},\dots,D_{n}^{(i)}$ such that $Y_{j}^{i}=D_{1}^{(i)} \cap \cdots \cap D_{j}^{(i)}$. Moreover, for each ray generator $v_{j}^{(i)}$ of the ray corresponding to $D_{j}^{(i)}$, $\{v_{1}^{(i)},\dots,v_{n}^{(i)}\}$ form a basis of $N$ and they generate a cone $\sigma_{i}$ since the fan $\Sigma$ is smooth. 

Finally, we define a map $\bar{\phi}:M \rightarrow \mathbb{Z}^{nr}, u \mapsto ({(<u,v_{j}^{(1)}>)}_{1 \le j \le n}; \cdots ; {(<u,v_{j}^{(r)}>)}_{1 \le j \le n})$ and it determines an $\mathbb{R}$-linear map ${\bar{\phi}}_{\mathbb{R}}:M_{\mathbb{R}} \rightarrow \mathbb{R}^{nr}$ naturally. Note that $\bar{\phi}$ is injective. 

\begin{prop}
With the above notation, let $L$ be a big $\mathbb{R}$-divisor on $X_{\Sigma}$. Suppose that there exists a torus invariant divisor $D$ such that $L^{\otimes k} \cong \OO_{X_{\Sigma}}(D)$ for some $k$ and ${D|}_{U_{\sigma_{i}}}=0$ for all $i=1,\dots,r$, where $U_{\sigma_{i}}$ is the affine toric variety associated to $\sigma_{i}$. Then 
\begin{align*}
\Delta_{Y^{1}_{\bullet},\dots,Y^{r}_{\bullet}}(L)=\frac{1}{k} \cdot {\bar{\phi}}_{\mathbb{R}}(P_{D}).
\end{align*}
\end{prop}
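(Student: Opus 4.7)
The plan is to adapt the computation of \cite[Section 6.1]{LM} for the ordinary toric Okounkov body to the extended setting. By homogeneity, $\Delta_{Y^1_\bullet,\ldots,Y^r_\bullet}(L) = \tfrac{1}{k}\,\Delta_{Y^1_\bullet,\ldots,Y^r_\bullet}(L^{\otimes k}) = \tfrac{1}{k}\,\Delta_{Y^1_\bullet,\ldots,Y^r_\bullet}(\OO_{X_\Sigma}(D))$, so it suffices to treat the case $k=1$: assume $L = \OO_{X_\Sigma}(D)$ with $D$ torus-invariant and $D|_{U_{\sigma_i}} = 0$ for every $i$.

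The central local computation is the identification $\nu_{Y^1_\bullet,\ldots,Y^r_\bullet}(\chi^u) = \bar{\phi}(u)$ for each character $\chi^u$ with $u \in mP_D \cap M$ and each $m \in \mathbb{N}$. Fix $i$ and work on the affine chart $U_{\sigma_i} \cong \mathbb{A}^n$ with coordinates $x_1,\ldots,x_n$ dual to the basis $\{v_1^{(i)},\ldots,v_n^{(i)}\}$ of $N$. The hypothesis $D|_{U_{\sigma_i}}=0$ identifies $\chi^u$, viewed as a section of $\OO(mD)$, with the regular monomial $\prod_{j=1}^n x_j^{\langle u, v_j^{(i)}\rangle}$, whose exponents are non-negative because $u \in mP_D$. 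Since each $Y_j^{(i)}$ is cut out on $U_{\sigma_i}$ by $x_j$, the iterated restriction defining $\nu_{Y^i_\bullet}$ simply reads off the exponent vector, giving $\nu_{Y^i_\bullet}(\chi^u) = \bar{\phi}_i(u)$, exactly as in \cite[Section 6.1]{LM}. The separation condition $x_i \notin Y_1^{(j)}$ for $j \neq i$ ensures that near $x_i$ none of the other initial flag divisors interfere, so Remark \ref{rmk:separate} applies and the components concatenate to $\nu_{Y^1_\bullet,\ldots,Y^r_\bullet}(\chi^u) = \bar{\phi}(u)$.

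This at once yields one direction: the semigroup $\Gamma_{Y^1_\bullet,\ldots,Y^r_\bullet}(D)$ contains every pair $(\bar{\phi}(u), m)$ with $u \in mP_D \cap M$, and the points $\bar{\phi}(u)/m$ are dense in $\bar{\phi}_{\mathbb{R}}(P_D)$ as $m \to \infty$ by density of rational points of $P_D$, giving the inclusion $\bar{\phi}_{\mathbb{R}}(P_D) \subseteq \Delta_{Y^1_\bullet,\ldots,Y^r_\bullet}(D)$.

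The main obstacle is the reverse inclusion. I would decompose a nonzero section $s = \sum c_u \chi^u$ in $H^0(X_\Sigma,\OO(mD))$ and, using the same local monomial picture on each $U_{\sigma_i}$, argue that $\nu_{Y^i_\bullet}(s)$ equals the lex-minimum of $\bar{\phi}_i(u)$ taken over $u$ in the support of $s$. Combining this with the injectivity of $\bar{\phi}$ and the toric Hilbert function equality $\dim H^0(X_\Sigma,\OO(mD)) = |mP_D \cap M|$ from \cite[Proposition 4.3.3]{CLS}, the aim is to show that after dividing by $m$ and passing to the closed convex cone the achievable valuations collapse into $\bar{\phi}_{\mathbb{R}}(P_D)$. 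The delicate point is ruling out the ``mixed'' valuation vectors that arise when the lex-minima realized on different flags are achieved by different monomials; one expects to control them asymptotically either by a Khovanskii-type dimension argument or by comparing volumes against $\mathrm{vol}_{X_\Sigma}(D) = n!\,\mathrm{vol}(P_D)$.
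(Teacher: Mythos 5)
Your sketch follows the paper's route up through the local monomial computation $\nu_{Y^{1}_{\bullet},\dots,Y^{r}_{\bullet}}(\chi^{u})=\bar{\phi}(u)$ and the easy inclusion $\bar{\phi}_{\mathbb{R}}(P_{D})\subseteq\Delta_{Y^{1}_{\bullet},\dots,Y^{r}_{\bullet}}(L)$, but it stops exactly at the step that carries the weight. The ``delicate point'' you flag --- controlling the mixed valuation vectors produced when the lex-minima on different flags are achieved by different monomials --- is not a side issue to be handled ``asymptotically'' or ``by comparing volumes.'' A volume comparison would at best show equality of volumes, not equality of convex bodies; and neither of your alternative plans gives a mechanism for showing the closed convex cone of the actual semigroup (mixed vectors included) still has base $\bar{\phi}_{\mathbb{R}}(P_{D})$. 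As written, the reverse inclusion remains a plan, not a proof.

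The paper closes the gap with an exact cardinality count that you nearly assembled. For each $m$, the image of $\nu_{Y^{1}_{\bullet},\dots,Y^{r}_{\bullet}}$ on $H^{0}(X_{\Sigma},\OO_{X_{\Sigma}}(mD))\setminus\{0\}$ has at most $h^{0}(X_{\Sigma},\OO_{X_{\Sigma}}(mD))$ distinct values (the property of valuation-like functions underlying \cite[Subsection 6.1]{LM}). On the other hand $h^{0}(X_{\Sigma},\OO_{X_{\Sigma}}(mD))=|mP_{D}\cap M|$ by \cite[Proposition 4.3.3]{CLS}, and since $\bar{\phi}$ is injective the monomials $\chi^{u}$, $u\in mP_{D}\cap M$, already produce $|mP_{D}\cap M|$ distinct values. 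Therefore the image equals $\bar{\phi}(mP_{D}\cap M)$ \emph{exactly}, in every degree $m$ --- no mixed vector ever contributes a new semigroup element, and no passage to the limit is needed before forming the cone. Once one knows $\Gamma_{Y^{1}_{\bullet},\dots,Y^{r}_{\bullet}}(\OO_{X_{\Sigma}}(D))=\{(\bar{\phi}(u),m):u\in mP_{D}\cap M\}$, taking the closed convex cone and slicing at level $1$, together with the observation that for sufficiently divisible $m$ the set $\tfrac1m\bar{\phi}(mP_{D}\cap M)$ has convex hull $\bar{\phi}_{\mathbb{R}}(P_{D})$, gives the conclusion; homogeneity handles the factor $\tfrac1k$. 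You held the two needed facts --- injectivity of $\bar{\phi}$ and the lattice-point count --- but did not compare them against the cardinality bound on the valuation image, which is the one move that actually rules out the mixed vectors.
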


\begin{proof}
Consider $u \in P_{D} \cap M$. Under the isomorphism $H^{0}(X_{\Sigma},\OO_{X_{\Sigma}}(D))=\bigoplus\limits_{m \in P_{D} \cap M} \mathbb{C} \cdot \chi^{m}$, the zero locus corresponding to $u$ is $D+\sum_{\rho \text{: ray}}<u,u_{\rho}>D_{\rho}$, where $u_{\rho}$ is the ray generator of $\rho$. Since ${D|}_{U_{\sigma_{i}}}=0$ for all $i=1,\dots,r$, and $\sigma_{1} \cap \sigma_{r}=\{0\}$, $\nu_{Y^{1}_{\bullet},\dots,Y^{r}_{\bullet}}(\chi^{u})=\bar{\phi}(u)$. Moreover, since $\phi$ is injective and there exists $h^{0}(L^{\otimes k})$ lattice points in $P_{D} \cap M$ (by the isomorphism $H^{0}(X_{\Sigma},L^{\otimes k})=\bigoplus\limits_{m \in P_{D} \cap M} \mathbb{C} \cdot \chi^{m}$), we have 
\begin{align*}
\bar{\phi}(P_{D} \cap M)=\text{Image of } ((H^{0}(X_{\Sigma},L^{\otimes k})-\{0\}) \overset{\small{\nu_{Y^{1}_{\bullet},\dots, Y^{r}_{\bullet}}}}{\longrightarrow} \mathbb{Z}^{nr}).
\end{align*}
Note that for a sufficiently large and divisible $m \gg 0$, $mP_{D}$ has all its vertices in $M$ so that $(\text{the convex hull of } \frac{1}{m} \bar{\phi}(mP_{D} \cap M))={\bar{\phi}}_{\mathbb{R}}(P_{D})$. By the homogeneity of extended Okounkov bodies and the fact that $P_{mD}=mP_{D}$, we conclude that $\Delta_{Y^{1}_{\bullet},\dots,Y^{r}_{\bullet}}(L)=\frac{1}{k} \cdot {\bar{\phi}}_{\mathbb{R}}(P_{D})$. 
\end{proof}

\end{subsection}

\end{section}

\end{section}

\begin{section}{Asymptotic base loci and multi-weight moving Seshadri constant via extended infinitesimal Okounkov bodies} \label{section:5}

This section is the core of the paper. We define the extended infinitesimal Okounkov bodies and discuss the asymptotic base loci and the multi-weight moving Seshadri constants in terms of extended infinitesimal Okounkov bodies. First, we recall the definition of infinitesimal flags. 

\begin{defn} (\cite[Definition 2.1]{KL1507}) 
Let $\phi:{\rm Bl}_{x}(X) \rightarrow X$ be the blow-up of $X$ at a point $x$ with exceptional divisor $E$. An infinitesimal flag $Y_{\bullet}$ over $x$ is an admissible flag $Y_{\bullet}:Y_{0}={\rm Bl}_{x}(X) \supset Y_{1}=E \supset \dots \supset Y_{n}$, where each $Y_{i}$ is a linear subspace of $E \cong \mathbb{P}^{n-1}$ of dimension $n-i$ for each $i=2, \dots, n$. 
\end{defn}

\begin{defn} 
Let $\pi:\tilde{X}={\rm Bl}_{\{x_{1}, \dots, x_{r}\}}(X) \rightarrow X$ be the blow-up of $X$ at $x_{1}, \dots, x_{r} \in X$, and let $Y^{i}_{\bullet}$ be an infinitesimal flag over $x_{i}$ for each $i=1, \dots, r$. The extended infinitesimal Okounkov body of a big $\mathbb{R}$-divisor $D$ with respect to $Y^{1}_{\bullet}, \dots, Y^{r}_{\bullet}$ is the extended Okounkov body $\Delta_{Y^{1}_{\bullet}, \dots, Y^{r}_{\bullet}}(\pi^{*}D)$. We simply write it $\widetilde{\Delta}_{Y^{1}_{\bullet}, \dots, Y^{r}_{\bullet}}(D)$.
\end{defn}

The inverted standard slice simplex $\Delta_{(\xi_{1},\dots,\xi_{r})}^{-r}$ of size $(\xi_{1},\dots,\xi_{r}) \in \mathbb{R}_{\ge 0}^{r}$ in $\mathbb{R}^{nr}$ is the convex hull of the set
\begin{align*}
\{{\bf 0}, \text{ } {\bf v}^{(\xi_{1},\dots,\xi_{r})}_{1}, {\bf v}^{(\xi_{1},\dots,\xi_{r})}_{1}+{\bf v}^{(\xi_{1},\dots,\xi_{r})}_{2}, \dots, \text{ } {\bf v}^{(\xi_{1},\dots,\xi_{r})}_{1}+{\bf v}^{(\xi_{1},\dots,\xi_{r})}_{n}\} \subseteq \mathbb{R}^{nr}.
\end{align*}
In particular, when $\xi_{1}=\dots=\xi_{r}=\xi$, then $\Delta_{(\xi,\dots,\xi)}^{-r}$ will be denoted by $\Delta_{\xi}^{-r}$. 

\begin{rmk}
$\Delta_{(\xi_{1},\dots,\xi_{r})}^{-r}$ can be viewed as an inverted standard simplex of size $\sqrt{\sum_{i=1}^{r} \xi_{i}^{2}}$ in $S_{\xi_{1},\dots,\xi_{r}} \cong \mathbb{R}^{n}$. 
\end{rmk}

A major difference between the infinitesimal Okounkov bodies and the non-infinitesimal ones is that the infinitesimal Okounkov bodies are contained in some inverted standard simplex in a very natural way (\cite[Proposition 2.6]{KL1507}). We can say the similar thing in the case of extended Okounkov bodies. 

\begin{lem} \label{lem:upper bound}
Let $D$ be a big $\mathbb{R}$-divisor on $X$ with $x_{1},\dots,x_{r} \in X$. Then there exists a constant $\mu(D;x_{1},\dots,x_{r})>0$ such that 
\begin{align*}
\Delta_{Y^{1}_{\bullet}, \dots, Y^{r}_{\bullet}}(\pi^{*}D) \subseteq r \cdot {\rm Conv}(\bigcup_{i=1}^{r} {\rm pr}_{i}(\Delta^{-r}_{\mu(D;x_{1},\dots,x_{r})}))
\end{align*}
for any infinitesimal flags $Y^{1}_{\bullet},\dots,Y^{r}_{\bullet}$ over $x_{1},\dots,x_{r}$, where ${\rm pr}_{i}$ is a projection map in Note \ref{note:projection}. 
\end{lem}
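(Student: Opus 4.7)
The plan is to reduce the claim to the known single-point infinitesimal bound of K\"uronya--Lozovanu by splitting the extended body factor by factor via Note \ref{note:projection}-(2).

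First I would apply Note \ref{note:projection}-(2) to the big $\mathbb{R}$-divisor $\pi^{*}D$ on $\tilde{X}$ together with the infinitesimal flags $Y^{1}_{\bullet},\dots,Y^{r}_{\bullet}$. This yields the inclusion
\begin{align*}
\widetilde{\Delta}_{Y^{1}_{\bullet},\dots,Y^{r}_{\bullet}}(D) \;\subseteq\; r\cdot{\rm Conv}\Bigl(\bigcup_{i=1}^{r}\Delta_{i}(\pi^{*}D)\Bigr),
\end{align*}
where each $\Delta_{i}(\pi^{*}D)\subseteq\mathbb{R}^{nr}$ is the image of the single-point infinitesimal Okounkov body $\Delta_{Y^{i}_{\bullet}}(\pi^{*}D)\subseteq\mathbb{R}^{n}$ under the inclusion into the $i$-th factor slab.

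Next I would invoke \cite[Proposition 2.6]{KL1507} for each $i$: it asserts that $\Delta_{Y^{i}_{\bullet}}(\pi^{*}D)$ is contained in the standard inverted simplex in $\mathbb{R}^{n}$ of size equal to the Nakayama-type constant
\begin{align*}
\mu(D;x_{i}) \;=\; \sup\{\,t>0 : \pi^{*}D-tE_{i}\text{ is big}\,\},
\end{align*}
where $E_{i}$ is the exceptional divisor of $\pi$ over $x_{i}$. This quantity is finite and strictly positive since $D$ is big, and, crucially, it depends only on $(D,x_{i})$ and not on the particular infinitesimal flag chosen over $x_{i}$. Setting $\mu(D;x_{1},\dots,x_{r}):=\max_{1\le i\le r}\mu(D;x_{i})$, each $\Delta_{i}(\pi^{*}D)$ lies inside the $i$-th factor copy of the standard inverted simplex of size $\mu(D;x_{1},\dots,x_{r})$, which under the natural identification is precisely ${\rm pr}_{i}(\Delta^{-r}_{\mu(D;x_{1},\dots,x_{r})})$. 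Plugging these bounds into the inclusion from Note \ref{note:projection}-(2) gives the desired estimate.

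I do not expect a substantive obstacle here: the argument is essentially an assembly of Note \ref{note:projection}-(2) with \cite[Proposition 2.6]{KL1507}. The only care needed is the notational identification between ${\rm pr}_{i}(\Delta^{-r}_{\mu})$ and the embedded size-$\mu$ standard inverted simplex in the $i$-th factor, and the remark that the Nakayama-type constants $\mu(D;x_{i})$ are flag-independent, which allows them to be combined into a single uniform constant.
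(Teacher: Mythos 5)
Your proposal is correct and follows essentially the same route as the paper: define $\mu(D;x_{i})=\sup\{t>0:\pi^{*}D-tE_{i}\text{ is big}\}$, take the maximum over $i$, and assemble Note \ref{note:projection}-(2) with \cite[Proposition 2.6]{KL1507}. The paper's proof is a one-liner doing exactly this, and your version simply makes explicit the same notational identification of $\mathrm{pr}_{i}(\Delta^{-r}_{\mu})$ with the size-$\mu$ inverted simplex in the $i$-th factor slab and the flag-independence of the Nakayama constants.
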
 

\begin{proof}
Let $\mu(D;x_{i})=\sup \{s>0 \text{ }|\text{ $\pi^{*}D-sE_{i}$ is big}\}$, and set $\mu(D;x_{1},\dots,x_{r})=\max_{i=1,\dots, r} \{\mu(D;x_{i})\}$. Then the result follows from \cite[Proposition 2.6]{KL1507} and Note \ref{note:projection}. 
\end{proof}

\begin{subsection}{Restricted base loci via extended Okounkov bodies}

One defines the restricted base locus of a big $\mathbb{R}$-divisor $D$ as
\begin{align*}
{\rm \bf{B}}_{-}(D) \overset{\rm def}{=} \bigcup_{A} {\rm \bf{B}}(D+A),
\end{align*}
where the union is taken over all ample divisors $A$, such that $D+A$ is a $\mathbb{Q}$-divisor (\cite[Definition 1.12]{ELMNP2}). This locus is a countable union of closed subvarieties of $X$ (\cite[Proposition 1.19]{ELMNP2}). 

\begin{prop} \label{prop:restricted base loci}
Let $D$ be a big $\mathbb{R}$-divisor on $X$. Then the following are equivalent. 
\begin{enumerate}[(1)]
\item $x_{1},\dots,x_{r} \notin {\rm \bf{B}}_{-}(D)$.
\item There exist infinitesimal flags $Y^{1}_{\bullet},\dots, Y^{r}_{\bullet}$ over $x_{1},\dots,x_{r}$ such that ${\bf 0} \in \widetilde{\Delta}_{Y^{1}_{\bullet}, \dots,Y^{r}_{\bullet}}(D)$. 
\item For every infinitesimal flags $Y^{1}_{\bullet},\dots, Y^{r}_{\bullet}$ over $x_{1},\dots,x_{r}$, one has ${\bf 0} \in \widetilde{\Delta}_{Y^{1}_{\bullet}, \dots,Y^{r}_{\bullet}}(D)$.
\end{enumerate}
\end{prop}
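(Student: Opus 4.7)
The plan is to establish the cyclic chain $(3)\Rightarrow(2)\Rightarrow(1)\Rightarrow(3)$. The first implication is immediate, so only the remaining two require real work.

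For $(2)\Rightarrow(1)$, I would project the extended body down to each single-point slot via Note~\ref{note:projection}(1): the hypothesis $\mathbf{0}\in\widetilde{\Delta}_{Y^1_\bullet,\dots,Y^r_\bullet}(D)$ forces $\mathbf{0}\in\mathrm{pr}_i(\widetilde{\Delta}_{Y^1_\bullet,\dots,Y^r_\bullet}(D))=\Delta_{Y^i_\bullet}(\pi^*D)$ for every $i$. Because the graded semigroup used to build the extended body generates the full lattice (as proved inside Theorem~\ref{thm:global}), the body equals the closure of its normalized valuative points, as in \cite[Proposition~4.1]{LM}. This produces sections $s_k\in H^0(X,m_kD)$ with $\mathrm{mult}_{x_i}(s_k)/m_k\to 0$ for every $i$, so the asymptotic order of vanishing $\sigma_{x_i}(D)$ is zero. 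Combined with the birational invariance $\mathbf{B}_{-}(\pi^*D)=\pi^{-1}(\mathbf{B}_{-}(D))$ and the standard characterization of $\mathbf{B}_{-}$ via asymptotic orders along prime divisors (\cite{ELMNP2}), this forces $x_i\notin\mathbf{B}_{-}(D)$ for all $i$.

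For $(1)\Rightarrow(3)$, I fix arbitrary infinitesimal flags $Y^1_\bullet,\dots,Y^r_\bullet$ and an ample $\mathbb{Q}$-divisor $H$. For each rational $\varepsilon>0$, a small perturbation produces an ample $\mathbb{R}$-divisor $A_\varepsilon$ close to $\varepsilon H$ with $D+A_\varepsilon$ a big $\mathbb{Q}$-divisor. By hypothesis, $x_i\notin\mathbf{B}_{-}(D)\supseteq\mathbf{B}(D+A_\varepsilon)$ for every $i$, so a sufficiently divisible multiple yields a section $s\in H^0(X,m(D+A_\varepsilon))$ that does not vanish at any $x_i$. Pulling back, $\mathrm{ord}_{E_i}(\pi^*s)=0$ for every $i$, and since for an infinitesimal flag the coordinates satisfy $\nu_j^{(i)}\le\nu_1^{(i)}$, every coordinate of $\nu_{Y^1_\bullet,\dots,Y^r_\bullet}(\pi^*s)$ vanishes. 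Hence $(\mathbf{0},D+A_\varepsilon)$ lies in the global extended Okounkov body $\Delta_{Y^1_\bullet,\dots,Y^r_\bullet}(X)$. Letting $\varepsilon\to 0$ and invoking the closedness of this cone from Theorem~\ref{thm:global}, one concludes $(\mathbf{0},D)\in\Delta_{Y^1_\bullet,\dots,Y^r_\bullet}(X)$, i.e.\ $\mathbf{0}\in\widetilde{\Delta}_{Y^1_\bullet,\dots,Y^r_\bullet}(D)$ for the chosen flags.

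The main technical obstacle is the limiting step in $(1)\Rightarrow(3)$: producing ample perturbations $A_\varepsilon\to 0$ with $D+A_\varepsilon$ rational, which is subtle because the open ample cone excludes $\mathbf{0}$. This is handled by a density argument, for instance taking $A_\varepsilon=\varepsilon H-(D-D_\varepsilon)$ where $D_\varepsilon$ is a rational approximation of $D$ with $\|D_\varepsilon-D\|\ll\varepsilon$; openness of the ample cone keeps $A_\varepsilon$ ample while $D+A_\varepsilon=D_\varepsilon+\varepsilon H$ is rational. A secondary point used in $(2)\Rightarrow(1)$ is that the extended Okounkov body equals the closure of its normalized valuative points, which follows from the lattice-generation claim inside Theorem~\ref{thm:global} combined with the abstract Okounkov-body cone criterion of \cite{LM}.
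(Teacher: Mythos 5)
Your chain $(3)\Rightarrow(2)\Rightarrow(1)\Rightarrow(3)$ is logically valid and the individual steps hold up, but you take a genuinely different route from the paper for the crucial implication $(1)\Rightarrow(3)$, and your route is actually the more careful one.

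For $(2)\Rightarrow(1)$, the paper simply projects via Note~\ref{note:projection} and applies \cite[Theorem~3.1]{KL1507} directly: ${\bf 0}\in\widetilde{\Delta}$ gives ${\bf 0}\in\Delta_{Y^i_\bullet}(\pi^*D)$ for each $i$, hence $x_i\notin{\bf B}_-(D)$. You instead re-derive the relevant direction of that theorem through density of valuative points and asymptotic multiplicities; this is correct in spirit but longer than needed, and your citation of \cite[Proposition~4.1]{LM} for the density statement is slightly off target (that proposition concerns volumes). A direct appeal to \cite[Theorem~3.1]{KL1507} would be cleaner.

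For $(1)\Rightarrow(3)$, the paper argues by contradiction: assuming ${\bf 0}\notin\widetilde{\Delta}_{Y^1_\bullet,\dots,Y^r_\bullet}(D)$, it asserts that there must exist some $i$ with ${\bf 0}\notin\mathrm{pr}_i(\widetilde{\Delta})$, and then derives a contradiction with \cite[Theorem~3.1]{KL1507}. The step ``${\bf 0}\notin\Delta$ implies ${\bf 0}\notin\mathrm{pr}_i(\Delta)$ for some $i$'' is not a general property of convex bodies (a segment in $\mathbb{R}^4$ from $(1,0,0,0)$ to $(0,0,1,0)$ projects to contain the origin in both slots), and its contrapositive is precisely the statement being proved, so the paper's argument quietly appeals to the conclusion. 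Your construction avoids this entirely: you perturb to a rational big class $D+A_\varepsilon$ with $x_i\notin{\bf B}(D+A_\varepsilon)$, produce a single section nonvanishing at all the $x_i$ (a generic combination of sections nonvanishing at each point), observe that its pullback then has trivial $\nu_{Y^1_\bullet,\dots,Y^r_\bullet}$-valuation because ${\rm ord}_{E_i}=0$ forces a nonzero restriction to each $E_i$, and conclude by letting $\varepsilon\to 0$ inside the closed global cone of Theorem~\ref{thm:global}. This supplies the missing ingredient in the paper's $(1)\Rightarrow(3)$ and is the approach one would want to record. The one small point to tidy is that the limiting cone lives over $N^1(\tilde X)_\mathbb{R}$, so the relevant limit is $({\bf 0},\pi^*D)$ rather than $({\bf 0},D)$.
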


\begin{proof}
$(1) \Rightarrow (3)$ Suppose not, i.e. there exist infinitesimal flags $Y^{i}_{\bullet}$ over $x_{i}$ such that ${\bf 0} \notin \widetilde{\Delta}_{Y^{1}_{\bullet}, \dots,Y^{r}_{\bullet}}(D)$. Let ${\rm pr}_{i}:\mathbb{R}^{nr} \rightarrow \mathbb{R}^{n}$ be a projection as in Note \ref{note:projection}. Then there exists $i$ such that $(0,\dots,0) \notin {\rm pr}_{i}(\widetilde{\Delta}_{Y^{1}_{\bullet}, \dots,Y^{r}_{\bullet}}(D))$, that is, $(0, \dots, 0) \notin \widetilde{\Delta}_{Y^{i}_{\bullet}}(D)$. However, it contradicts \cite[Theorem 3.1]{KL1507}. 

The implication $(3) \Rightarrow (2)$ is obvious, so we only need to check $(2) \Rightarrow (1)$. It is a direct consequence of Note \ref{note:projection} and \cite[Theorem 3.1]{KL1507}, so we are done.  
\end{proof}

\end{subsection}

\begin{subsection} {Moving multi-weight Seshadri constants and augmented base loci via extended Okounkov bodies}

We recall the necessary information about multi-weight moving Seshadri constants and connect them with the extended Okounkov bodies. 

The augmented base locus of a big $\mathbb{R}$-divisor $D$ is defined by 
\begin{align*}
{\rm \bf{B}}_{+}(D) \overset{\rm def}{=} \bigcap_{A} {\rm \bf{B}}(D-A),
\end{align*}
where the intersection is taken over all ample divisors $A$, such that $D-A$ is a $\mathbb{Q}$-divisor (\cite[Definition 1.12]{ELMNP2}). 

\begin{defn} (\cite{HR, Na, AT}) \label{defn:multi-weight}
\begin{enumerate}[(1)]
\item The multi-weight Seshadri constant of an ample $\mathbb{R}$-divisor $A$ on $X$ with weight ${\bf m}=(m_{1},\dots,m_{r}) \in \mathbb{N}^{r}$ at $x_{1},\dots,x_{r} \in X$ is the real number
\begin{align*}
\epsilon_{\bf m}(A;x_{1},\dots,x_{r}) \overset{\rm def}{=} \sup \{a \in \mathbb{R} \text{ }|\text{ } \pi^{*}A-a\cdot \sum_{i=1}^{r}m_{i}E_{i} \text{ is ample on } \tilde{X}\}.
\end{align*}
\item The multi-weight moving Seshadri constant of a big $\mathbb{R}$-divisor $D$ on $X$ with weight ${\bf m}=(m_{1},\dots,m_{r}) \in \mathbb{N}^{r}$ at $x_{1},\dots,x_{r} \in X$ is the real number
\begin{align*}
\epsilon_{\bf m}(\lVert D \rVert;x_{1},\dots,x_{r})&=\sup_{f^{*}L=A+E} \epsilon_{\bf m}(A;f^{-1}(x_{1}),\dots,f^{-1}(x_{r})) \text{ if } x_{1},\dots,x_{r} \notin {\rm \bf{B}}_{+}(D) \\
& =0 \text{ otherwise}
\end{align*}
where the supremum is taken over all projective resolutions $f:Y \rightarrow X$ with $f$ an isomorphism around $x_{1},\dots,x_{r}$ and over all decompositions $f^{*}D=A+E$, where $A$ is an ample $\mathbb{Q}$-divisor and $E$ is effective with $f^{-1}(x_{i}) \notin {\rm Supp}(E)$ for all $i$. 

When ${\bf m}=(1,\dots,1)$, we write it simply $\epsilon(\lVert D \rVert;x_{1},\dots,x_{r})$. 
\end{enumerate}
\end{defn}

\begin{lem} \label{lem:inequality}
Let $D$ be a big $\mathbb{R}$-divisor on $X$, ${\bf m}=(m_{1},\dots,m_{r}) \in \mathbb{N}^{r}$, and $x_{1}, \dots,x_{r} \notin {\rm \bf{B}}_{-}(D)$. For any infinitesimal flags $Y^{1}_{\bullet}, \dots, Y^{r}_{\bullet}$ over $x_{1}, \dots,x_{r}$, 
\begin{align*}
\Delta^{-r}_{(\eta_{1}, \dots, \eta_{r})} \subseteq \widetilde{\Delta}_{Y^{1}_{\bullet}, \dots, Y^{r}_{\bullet}}(D), 
\end{align*}
where $\eta_{i}=m_{i} \cdot \epsilon_{\bf m}(\lVert D \rVert;x_{1},\dots,x_{r})$. 
\end{lem}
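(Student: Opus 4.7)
The plan is to reduce to an ample representative via the definition of the moving Seshadri constant, produce sections on the blow-up realising each vertex of the inverted simplex, and conclude by convexity together with a limit.

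First, if $\epsilon := \epsilon_{\mathbf{m}}(\lVert D \rVert;x_1,\ldots,x_r) = 0$, the simplex collapses to $\{\mathbf{0}\}$ and the claim reduces to $\mathbf{0} \in \widetilde{\Delta}_{Y^{1}_{\bullet},\ldots,Y^{r}_{\bullet}}(D)$, which is Proposition \ref{prop:restricted base loci} applied under $x_i \notin \mathbf{B}_-(D)$. Assume therefore that $\epsilon > 0$, so that $x_i \notin \mathbf{B}_+(D)$ by the definition of the moving constant.

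Fix $0<a<\epsilon$. By Definition \ref{defn:multi-weight}, I choose a projective resolution $f:Y\to X$ that is an isomorphism near each $x_i$, and a decomposition $f^{*}D=A+E'$ in which $A$ is an ample $\mathbb{Q}$-divisor, $E'$ is effective with $f^{-1}(x_i)\notin \mathrm{Supp}(E')$, and $\epsilon_{\mathbf{m}}(A;f^{-1}(x_\bullet))>a$. Write $\pi_Y:\tilde Y\to Y$ for the blow-up at the $f^{-1}(x_i)$'s with exceptional divisors $E_i^{Y}$; then $\pi_Y^{*}A-a\sum_{i} m_i E_i^{Y}$ is ample on $\tilde Y$. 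Since $f$ is an isomorphism near each $x_i$, it lifts to an isomorphism $\tilde f:\tilde Y\to \tilde X$ of neighbourhoods of the respective exceptional loci; the infinitesimal flags $Y^{i}_{\bullet}$ transport to $\tilde Y$ and $\tilde f^{*}$ gives an isomorphism $H^0(\tilde X, k\pi_X^{*}D)\cong H^0(\tilde Y, k\pi_Y^{*}f^{*}D)$ that preserves $\nu_{Y^{1}_{\bullet},\ldots,Y^{r}_{\bullet}}$-valuations. Thus it suffices to produce, for each of the $n+1$ vertices of $\Delta^{-r}_{(m_1 a,\ldots,m_r a)}$, a section of $k\pi_Y^{*}f^{*}D$ whose valuation vector is $k$ times that vertex.

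Choose $k$ large and divisible enough that every $kam_i$ is an integer and Serre vanishing is in force. The vertex $\mathbf{0}$ is realised by a section of $k\pi_Y^{*}A$ nonvanishing along every $E_i^{Y}$ (available by ampleness of $A$), multiplied by the canonical section of $k\pi_Y^{*}E'$, which by the support hypothesis on $E'$ is nonvanishing along each $E_i^{Y}$. For each of the vertices $\mathbf{v}_1$ and $\mathbf{v}_1+\mathbf{v}_j$ with $j=2,\ldots,n$, I proceed in two steps. First, on each $E_i^{Y}\cong\mathbb{P}^{n-1}$, the iterated surjectivity of hyperplane restrictions produces a section $\bar s_i\in H^0(E_i^{Y},\OO(kam_i))$ with the prescribed flag-vanishing profile: all entries zero except an entry $kam_i$ in slot $j$ (respectively, all entries zero for the vertex $\mathbf{v}_1$). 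Second, ampleness of $\pi_Y^{*}A-a\sum_i m_i E_i^{Y}$ together with the disjointness of the $E_i^{Y}$ makes Serre vanishing apply to the kernel of the simultaneous restriction map, lifting the tuple $(\bar s_1,\ldots,\bar s_r)$ to a global section $s'\in H^0(\tilde Y, k(\pi_Y^{*}A-a\sum_i m_i E_i^{Y}))$. Multiplication of $s'$ by $\prod_i \sigma_{E_i^{Y}}^{kam_i}$ (canonical sections of the exceptional divisors) upgrades it to a section of $k\pi_Y^{*}A$ whose valuation has $\nu_1^{(i)}=kam_i$, the entry in slot $j$ equal to $kam_i$, and all remaining entries zero; multiplying once more by the canonical section of $k\pi_Y^{*}E'$ yields the sought section of $k\pi_Y^{*}f^{*}D$.

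By convexity of $\widetilde{\Delta}_{Y^{1}_{\bullet},\ldots,Y^{r}_{\bullet}}(D)$, the whole simplex $\Delta^{-r}_{(m_1 a,\ldots,m_r a)}$ lies inside it for every $a<\epsilon$; since the body is closed and the simplex depends continuously on $a$, letting $a\to\epsilon$ gives the stated inclusion. I expect the main obstacle to be the prescribed-restriction step of the third paragraph: one must simultaneously realise on each $E_i^{Y}$ the exact flag-vanishing profile, and also guarantee that the lifted section does not pick up extra vanishing along any $E_i^{Y}$ (so that $\nu_1^{(i)}$ is exactly $kam_i$, not more). Both points are handled by the ampleness of $\pi_Y^{*}A-a\sum_i m_i E_i^{Y}$ and a careful choice of $k$. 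The reduction to an ample $A$ via Definition \ref{defn:multi-weight} and the transport of valuations across $\tilde f$ are essentially formal.
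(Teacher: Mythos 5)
Your proof is correct and follows essentially the same strategy as the paper's: exhibit sections of a large multiple of $\pi^*D$ realising each vertex of $\Delta^{-r}_{(m_1 a,\dots,m_r a)}$ by restricting to the disjoint exceptional divisors, lifting via the short exact sequence and Serre vanishing for the ample twist $\pi_Y^*A-a\sum m_iE_i^Y$, and multiplying back by powers of the canonical sections of the $E_i^Y$; then conclude by convexity, closedness, and $a\to\epsilon$. The only differences are organisational: the paper first proves the statement for an ample $D$ directly on $\tilde X$ with $K_\epsilon=\pi^*D-\epsilon\sum m_iE_i$, and then reduces the big case to the ample one via the containment $\widetilde{\Delta}_{\tilde Y^1_\bullet,\dots,\tilde Y^r_\bullet}(A)\subseteq\widetilde{\Delta}_{\tilde Y^1_\bullet,\dots,\tilde Y^r_\bullet}(f^*D)$ obtained by tensoring with the section cutting out $E'$; it also treats the mono-graded case $\mathbf m=(1,\dots,1)$ first and then indicates the modification for general $\mathbf m$. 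You fold these steps together in a single pass, which is a harmless streamlining.
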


\begin{proof}
Once we show it for big line bundles, the general one follows easily by the continuity of the extended Okounkov bodies. So we only deal with a big line bundle $D$ on $X$. First, we consider the mono-graded case, i.e. ${\bf m}=(1,\dots,1)$. 

Assume that $D$ is ample. Choose and fix a rational number $0 < \epsilon < \epsilon(D;x_{1},\dots,x_{r})$ and an infinitesimal flag $Y^{i}_{\bullet}$ over $x_{i}$ for each $i=1,\dots,r$. It is enough to show that $\Delta^{-r}_{\epsilon} \subseteq \widetilde{\Delta}_{Y^{1}_{\bullet}, \dots, Y^{r}_{\bullet}}(D)$. We write 
\begin{align*}
K_{\epsilon}=\pi^{*}D-\epsilon \cdot \sum_{i}^{r} E_{i}. 
\end{align*}
Note that $E_{i} \cong \mathbb{P}^{n-1}$ for all $i=1, \dots, r$, and $E_{i}$'s are all disjoint. Moreover, 
\begin{align*}
{\rm deg}_{E_{i}}({(mK_{\epsilon})|}_{E_{i}})={\rm deg}_{E_{i}}({(m\pi^{*}D-m\epsilon E_{i})|}_{E_{i}})=({(m\pi^{*}D-m\epsilon E_{i})|}_{E_{i}}.\OO_{E_{i}}(1)^{n-2})=m\epsilon
\end{align*}
for all sufficiently divisible $m>0$. \

For all sufficiently divisible $m \gg 0$, a short exact sequence 
\begin{align*}
0 \rightarrow \OO_{\tilde{X}}(mK_{\epsilon}-E_{1}-\dots-E_{r}) \rightarrow \OO_{\tilde{X}}(mK_{\epsilon}) \rightarrow \OO_{E_{1} \sqcup \dots \sqcup E_{r}}({mK_{\epsilon}|}_{E_{1} \sqcup \dots \sqcup E_{r}}) \rightarrow 0
\end{align*}
induces an exact sequence
{\small
\begin{align*}
0 \rightarrow H^{0}(\tilde{X}, \OO_{\tilde{X}}(mK_{\epsilon}-E_{1}-\dots-E_{r})) \rightarrow H^{0}(\tilde{X}, \OO_{\tilde{X}}(mK_{\epsilon})) \overset{\nu}{\rightarrow} H^{0}(E_{1} \sqcup \dots \sqcup E_{r}, \OO_{E_{1} \sqcup \dots \sqcup E_{r}}({mK_{\epsilon}|}_{E_{1} \sqcup \dots \sqcup E_{r}})) \rightarrow 0
\end{align*}
}
since $K_{\epsilon}$ is ample and $m \gg 0$. Note that 
\begin{align*}
H^{0}(E_{1} \sqcup \dots \sqcup E_{r}, \OO_{E_{1} \sqcup \dots \sqcup E_{r}}({mK_{\epsilon}|}_{E_{1} \sqcup \dots \sqcup E_{r}})) \cong H^{0}(E_{1}, \OO_{E_{1}}(m\epsilon)) \oplus \cdots \oplus H^{0}(E_{r}, \OO_{E_{r}}(m\epsilon)).
\end{align*}
Consider a non-zero section $(s^{(1)}_{1}, \dots, s^{(1)}_{r})$ whose $s^{(1)}_{i} \in H^{0}(E_{i}, \OO_{E_{i}}({mK_{\epsilon}|}_{E_{i}}))=H^{0}(E_{i}, \OO_{E_{i}}(m\epsilon))$ is a section such that $\nu_{{E_{i}}_{\bullet}}(s_{i}^{(1)})=(m\epsilon, 0, \dots, 0)$ for each $i=1, \dots, r$, where $\nu_{{E_{i}}_{\bullet}}$ is the valuation-like function attached to ${E_{i}}_{\bullet}$ (\cite[Subsection 1.1]{LM}). By the surjectivity of $\nu$, $(s_{1}^{(1)},\dots,s_{r}^{(1)})$ can be lifted to $\bar{s}^{(1)} \in H^{0}(\tilde{X},\OO_{\tilde{X}}(mK_{\epsilon}))$ and it induces a section $s^{(1)} \in H^{0}(\tilde{X}, \OO_{\tilde{X}}(m\pi^{*}D))$ that satisfies 
\begin{align*}
\nu_{Y^{1}_{\bullet}, \dots, Y^{r}_{\bullet}}(s^{(1)})=(m\epsilon, m\epsilon, 0, \dots, 0 \text{ ; } \dots \text{ ; } m\epsilon, m\epsilon, 0, \dots, 0). 
\end{align*}

Similarly, we can construct $s^{(0)}, \dots, s^{(r)} \in H^{0}(\tilde{X}, \OO_{\tilde{X}}(m\pi^{*}D))$ such that 
\begin{align*}
\nu_{Y^{1}_{\bullet}, \dots, Y^{r}_{\bullet}}(s^{(0)})&=(m\epsilon, 0, \dots, 0 \text{ ; } \dots \text{ ; } m\epsilon, 0, \dots, 0), \\
\nu_{Y^{1}_{\bullet}, \dots, Y^{r}_{\bullet}}(s^{(1)})&=(m\epsilon, m\epsilon, 0, \dots, 0 \text{ ; } \dots \text{ ; } m\epsilon, m\epsilon, 0, \dots, 0), \\
& \vdots \\
\nu_{Y^{1}_{\bullet}, \dots, Y^{r}_{\bullet}}(s^{(r)})&=(m\epsilon, 0, \dots, 0, m\epsilon \text{ ; } \dots \text{ ; } m\epsilon, 0, \dots, 0, m\epsilon). 
\end{align*}
It implies that ${\bf v}^{(\epsilon,\dots,\epsilon)}_{1}, {\bf v}^{(\epsilon,\dots,\epsilon)}_{1}+{\bf v}^{(\epsilon,\dots,\epsilon)}_{2}, \dots, {\bf v}^{(\epsilon,\dots,\epsilon)}_{1}+{\bf v}^{(\epsilon,\dots,\epsilon)}_{n}$ are contained in $\widetilde{\Delta}_{Y^{1}_{\bullet}, \dots, Y^{r}_{\bullet}}(D)$. Since ${\bf 0} \in \widetilde{\Delta}_{Y^{1}_{\bullet}, \dots, Y^{r}_{\bullet}}(D)$, we have $\Delta^{-r}_{\epsilon(D;x_{1},\dots,x_{r})} \subseteq \widetilde{\Delta}_{Y^{1}_{\bullet}, \dots, Y^{r}_{\bullet}}(D)$. 

Next, assume that $D$ is big. If one of the $x_{i}$'s is contained in ${\rm \bf{B}}_{+}(D)$, then $\epsilon(\lVert D \rVert;x_{1},\dots,x_{r})=0$. However, since $x_{1},\dots,x_{r} \notin {\rm \bf{B}}_{-}(D)$, $\widetilde{\Delta}_{Y^{1}_{\bullet}, \dots, Y^{r}_{\bullet}}(D)$ contains the origin (Proposition \ref{prop:restricted base loci}), so we are done in this case. So let $x_{1},\dots,x_{r} \notin {\rm \bf{B}}_{+}(D)$. Then we can find a projective resolution $f$ and a decomposition $f^{*}D=A+E$ as in Definition \ref{defn:multi-weight}. Fix any such $f$, $A$, and $E$. Then we have the commutative diagram: 
\begin{displaymath}
\xymatrix{
{\rm Bl}_{\{f^{-1}(x_{1}),\dots,f^{-1}(x_{r})\}}(Y) \ar[d]^{\pi_{Y}} \ar[rr]^{\phi} && {\rm Bl}_{\{x_{1},\dots,x_{r}\}}(X)=\tilde{X} \ar[d]^{\pi} \\
Y \ar[rr]_{f} && X.}
\end{displaymath}
Since $f$ is an isomorphism around $x_{1},\dots,x_{r}$, there are induced infinitesimal flags $\tilde{Y}^{1}_{\bullet},\dots, \tilde{Y}^{r}_{\bullet}$ over $f^{-1}(x_{1}),\dots,f^{-1}(x_{r})$ by the strict transforms of $Y^{1}_{\bullet},\dots,Y^{r}_{\bullet}$. It is easy to see that 
\begin{align*}
\widetilde{\Delta}_{Y^{1}_{\bullet}, \dots, Y^{r}_{\bullet}}(D)=\widetilde{\Delta}_{\tilde{Y}^{1}_{\bullet}, \dots, \tilde{Y}^{r}_{\bullet}}(f^{*}D). 
\end{align*}
Choose a non-zero section $s$ whose support is $E$. Then, for any non-zero section $t_{m} \in H^{0}(Y,\OO_{Y}(mA))$, $t_{m} \otimes s^{\otimes m} \in H^{0}(Y,f^{*}\OO_{Y}(mD))$. Since $f^{-1}(x_{1}),\dots,f^{-1}(x_{r}) \notin E$, 
\begin{align*}
\widetilde{\Delta}_{\tilde{Y}^{1}_{\bullet}, \dots, \tilde{Y}^{r}_{\bullet}}(A) \subseteq \widetilde{\Delta}_{\tilde{Y}^{1}_{\bullet}, \dots, \tilde{Y}^{r}_{\bullet}}(f^{*}D). 
\end{align*}
Since we proved the conclusion for ample line bundles in the mono-graded case, we have
\begin{align*}
\Delta^{-r}_{\epsilon(A;f^{-1}(x_{1}),\dots,f^{-1}(x_{r}))} \subseteq \widetilde{\Delta}_{\tilde{Y}^{1}_{\bullet}, \dots, \tilde{Y}^{r}_{\bullet}}(A) \subseteq \widetilde{\Delta}_{\tilde{Y}^{1}_{\bullet}, \dots, \tilde{Y}^{r}_{\bullet}}(f^{*}D)=\widetilde{\Delta}_{Y^{1}_{\bullet}, \dots, Y^{r}_{\bullet}}(D). 
\end{align*}
Since we choose arbitrary $f$, $A$, and $E$ in Definition \ref{defn:multi-weight}, $\Delta^{-r}_{\epsilon(\lVert D \rVert;x_{1},\dots,x_{r})} \subseteq \widetilde{\Delta}_{Y^{1}_{\bullet}, \dots, Y^{r}_{\bullet}}(D)$, which proves in the mono-graded case. 

For a general ${\bf m} \in \mathbb{N}^{r}$, by changing $K_{\epsilon}=\pi^{*}D-\epsilon \cdot \sum_{i=1}^{r} E_{i}$ (respectively ${\rm deg}_{E_{i}}({(mK_{\epsilon})|}_{E_{i}})=m\epsilon$) to $K_{\epsilon}=\pi^{*}D-\epsilon \cdot \sum_{i=1}^{r}m_{i}E_{i}$ (respectively ${\rm deg}_{E_{i}}({(mK_{\epsilon})|}_{E_{i}})=mm_{i}\epsilon$) and using the same argument, we can draw the conclusion. 
\end{proof}

We recall the relation between multi-weight moving Seshadri constants and jet separations. Main references are \cite[Chapter 5]{L}, \cite{I}, \cite{ELMNP1}, and \cite{AT}.  

\begin{defn} \label{defn:multi jet separation}
Let $D$ be a line bundle on $X$. 
\begin{enumerate}[(i)]
\item We say that $D$ separates ${\bf m}$-jets at $x_{1},\dots,x_{r}$ if the natural map 
\begin{align*}
H^{0}(X,\OO_{X}(D)) \rightarrow H^{0}(X,\OO_{X}(D)\otimes (\oplus_{i=1}^{r}\OO_{X}/m_{x_{i}}^{m_{i}+1}))
\end{align*}
is surjective, where $m_{x_{i}}$ is the ideal sheaf of $x_{i}$. 
\item Denote by $s_{\bf m}(D;x_{1},\dots,x_{r})$ the largest real number $s$ such that $D$ separates $\lceil s \cdot {\bf m} \rceil$-jets at $x_{1},\dots,x_{r}$, where $\lceil s \cdot {\bf m} \rceil:=(\lceil s \cdot m_{1} \rceil,\dots, \lceil s \cdot m_{r} \rceil)$.
\end{enumerate}
\end{defn}

\begin{lem} (\cite{I, AT}) \label{lem:continuity}
Let $D$ be a big $\mathbb{Q}$-divisor on $X$. 
\begin{enumerate}[(1)]
\item $\sup_{k \in \mathbb{N}} \frac{s_{\bf m}(kD;x_{1},\dots,x_{r})}{k}=\limsup_{k \rightarrow \infty} \frac{s_{\bf m}(kD;x_{1},\dots,x_{r})}{k}=\lim_{k \rightarrow \infty} \frac{s_{\bf m}(kD;x_{1},\dots,x_{r})}{k}$.
\item $\epsilon_{\bf m}(\lVert D \rVert;x_{1},\dots,x_{r})=\lim_{k \rightarrow \infty} \frac{s_{\bf m}(kD;x_{1},\dots,x_{r})}{k}$. 
\item The multi-weight moving Seshadri constant $\epsilon(\lVert - \rVert; x_{1},\dots,x_{r})$ is continuous on ${\rm Big}(X)_{\mathbb{R}}$. 
\end{enumerate}
\end{lem}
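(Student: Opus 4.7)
The plan is to handle the three assertions in order: (1) by a Fekete-type super-additivity, (2) by combining the ample case with the decomposition in the definition of the moving Seshadri constant, and (3) by a concavity argument.

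For (1), the key is super-additivity of $k \mapsto s_{\bf m}(kD;x_{1},\dots,x_{r})$. If $\sigma \in H^{0}(X,\OO_{X}(k_1 D))$ witnesses separation of $\lceil s_1 \cdot {\bf m}\rceil$-jets and $\tau \in H^{0}(X,\OO_{X}(k_2 D))$ witnesses $\lceil s_2 \cdot {\bf m}\rceil$-jets at $(x_1,\dots,x_r)$, then $\sigma \otimes \tau \in H^{0}(X,\OO_{X}((k_1+k_2)D))$ separates $\lceil s_{1} m_{i}\rceil + \lceil s_{2} m_{i}\rceil$-jets at each $x_{i}$, which dominates $\lceil (s_1+s_2) m_i \rceil$. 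Hence $s_{\bf m}((k_1+k_2)D;\dots) \ge s_{\bf m}(k_1 D;\dots)+s_{\bf m}(k_2 D;\dots)$, and Fekete's lemma immediately yields the equality of supremum, limsup, and limit claimed in (1).

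For (2), I would first treat the case where $D=A$ is an ample $\mathbb{Q}$-divisor. One direction, $\epsilon_{\bf m}(A;\dots) \ge \lim_{k\to\infty} s_{\bf m}(kA;\dots)/k$, follows by observing that separation of $\lceil s \cdot {\bf m}\rceil$-jets forces $\pi^{*}A - (s/k)\sum m_i E_i$ to be a limit of globally generated (hence nef) classes on $\tilde X$, and perturbing by a small ample class. The reverse direction is the crucial step: for rational $t < \epsilon_{\bf m}(A;\dots)$ the class $\pi^{*}A - t\sum m_i E_i$ is ample on $\tilde X$, so Serre vanishing gives, for $k\gg 0$, a surjection $H^{0}(\tilde X, k\pi^{*}A) \twoheadrightarrow H^{0}(\tilde X, k\pi^{*}A \otimes \OO_{\sum(ktm_i+1)E_i})$, which translates into separation of $\lfloor kt\cdot {\bf m}\rfloor$-jets of $kA$ at $(x_1,\dots,x_r)$. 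For a general big $D$, any decomposition $f^{*}D=A+E$ from Definition \ref{defn:multi-weight} lets us multiply the jet-separating sections of $kA$ by the defining section of $kE$; since $f^{-1}(x_i)\notin {\rm Supp}(E)$, the jet data at $f^{-1}(x_i)$ is preserved, and taking suprema over such decompositions matches $\sup_k s_{\bf m}(kD;\dots)/k$ identified in (1).

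Part (3) then combines (2) with the super-additivity proved in (1): the function $D \mapsto \epsilon_{\bf m}(\lVert D \rVert;x_1,\dots,x_r)$ inherits positive homogeneity and concavity on the open cone ${\rm Big}(X)_{\mathbb{R}}$, and any concave function on an open convex subset of a finite-dimensional real vector space is automatically continuous. I expect the main obstacle to be the reverse inequality in the ample case of (2): upgrading ampleness of $\pi^{*}A - t\sum m_i E_i$ to an actual surjection of global sections at the correct infinitesimal thickening is where the precise numerics of the jet orders, the choice of vanishing theorem, and the support condition $f^{-1}(x_i)\notin {\rm Supp}(E)$ all have to cooperate simultaneously.
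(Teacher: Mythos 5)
The paper itself does not prove this lemma; it is cited from Ito \cite{I} and Trusiani \cite{AT}, so there is no in-paper argument to compare against. Your outline for (1) and (3) is sound and matches the standard route in those references: super-additivity of jet separation plus Fekete for (1), and for (3) the super-additivity of $\epsilon_{\bf m}(\lVert \cdot \rVert;x_1,\dots,x_r)$ combined with homogeneity gives concavity, hence continuity on the open cone ${\rm Big}(X)_\mathbb{R}$. (Two small caveats in (1): jet separation is a property of the linear system, not of a single section, so the "$\sigma\otimes\tau$ witnesses" phrasing should be replaced by the surjection $\mathcal{O}/\mathfrak{m}^{a+1}\otimes\mathcal{O}/\mathfrak{m}^{b+1}\twoheadrightarrow\mathcal{O}/\mathfrak{m}^{a+b+1}$; and you should dispose of the finitely many $k$ for which $s_{\bf m}(kD;\cdot)$ is undefined before invoking Fekete.)

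The genuine gap is in (2), and you have actually mislocated the hard step. The Serre-vanishing argument in the ample case is standard. What your sketch does \emph{not} establish is the inequality $\lim_k s_{\bf m}(kD;\cdot)/k \le \epsilon_{\bf m}(\lVert D\rVert;\cdot)$ for a merely big $D$. Your argument shows, for every decomposition $f^*D=A+E$ as in Definition \ref{defn:multi-weight}, that $s_{\bf m}(kD;\cdot)\ge s_{\bf m}(kA;f^{-1}(x_i))$, hence $\lim_k s_{\bf m}(kD;\cdot)/k \ge \epsilon_{\bf m}(A;\cdot)$, and after taking the supremum you get one inequality. Asserting that this "matches" $\sup_k s_{\bf m}(kD;\cdot)/k$ is not a proof of the converse. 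To get the converse one must start from the hypothesis that $kD$ separates $\lceil s\,{\bf m}\rceil$-jets at the $x_i$, pass to a resolution of the base locus of the relevant linear subsystem so that $kD$ decomposes as a free part $M_k$ plus a fixed divisor $F_k$ with $x_i\notin \mathrm{Supp}(F_k)$, perturb $M_k$ to an ample class, and show this produces a decomposition $f^*D=A+E$ whose multi-weight Seshadri constant is at least $s-\delta$ for arbitrarily small $\delta$. This construction (as in \cite[Lemma 2.1]{I} and the corresponding argument in \cite{AT}, following \cite{ELMNP1}) is the genuinely nontrivial step, and it is the one your proposal omits.
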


In order to study the multi-weight moving Seshadri constants via extended infinitesimal Okounkov bodies, we follow the steps of the proof of \cite[Proposition 4.10]{KL1507}. For them, we mimic the proofs of \cite[Theorem 4.24]{LM} and \cite[Proposition 4.9]{KL1507} with some modifications. 

\begin{lem} \label{lem:slice}
Let $D$ be a big $\mathbb{R}$-divisor on $X$. For any infinitesimal flags $Y^{1}_{\bullet}, \dots, Y^{r}_{\bullet}$ over $x_{1}, \dots,x_{r}$, and $a, m_{1}, \dots, m_{r} \ge 0$ such that $\pi^{*}D-a \cdot \sum_{i=1}^{r}m_{i}E_{i}$ is big, 
\begin{align*}
{\Delta_{Y^{1}_{\bullet}, \dots, Y^{r}_{\bullet}}(D)}_{\nu_{1}^{(1)} \ge am_{1}, \dots, \nu_{1}^{(r)} \ge am_{r}}=\Delta_{Y^{1}_{\bullet}, \dots, Y^{r}_{\bullet}}(\pi^{*}D-a \cdot \sum_{i=1}^{r}m_{i}E_{i})+(am_{1},0,\dots,0 \text{ ; } \dots \text{ ; }am_{r},0,\dots,0).
\end{align*}
\end{lem}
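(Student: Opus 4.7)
The plan is to adapt the slicing lemma of Lazarsfeld--Musta\c{t}\u{a} (\cite[Theorem 4.24]{LM}) and its infinitesimal refinement (\cite[Proposition 4.9]{KL1507}) to the multi-point setting. The crucial geometric ingredient is that the exceptional divisors $E_1,\ldots,E_r \subseteq \tilde{X}$ are mutually disjoint, so the infinitesimal flags $Y^1_\bullet,\ldots,Y^r_\bullet$ decouple cleanly from one another (Remark \ref{rmk:separate}).

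\emph{Reduction to the integral case.} Using the homogeneity $\Delta_{Y^1_\bullet,\ldots,Y^r_\bullet}(pD)=p\cdot\Delta_{Y^1_\bullet,\ldots,Y^r_\bullet}(D)$ and the continuity of the extended Okounkov body on $\mathrm{Big}(X)_{\mathbb{R}}$ provided by Theorem \ref{thm:global} and Remark \ref{rmk:R-divisor}, I first reduce to the case where $D$ is a big integral divisor and $a, m_1,\ldots,m_r$ are nonnegative integers with $\pi^{*}D - a\sum_i m_i E_i$ big integral; the general real case then follows by clearing denominators and taking limits, since both sides depend continuously on $(D,a)$.

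\emph{Semigroup bijection.} Let $e_i\in H^{0}(\tilde{X},\OO_{\tilde{X}}(E_i))$ be the canonical section cutting out $E_i$. For each $k\ge 1$, multiplication by $\prod_{i=1}^{r}e_i^{akm_i}$ yields a linear isomorphism
\begin{align*}
\Phi_k: H^{0}\!\bigl(\tilde{X},\,k\pi^{*}D - ak\textstyle\sum_{i}m_iE_i\bigr) \xrightarrow{\ \sim\ } \bigl\{s\in H^{0}(\tilde{X},k\pi^{*}D) : \mathrm{ord}_{E_i}(s)\ge akm_i\ \text{for all } i\bigr\}.
\end{align*}
The key claim is that for every $s = \Phi_k(s')$,
\begin{align*}
\nu_{Y^1_\bullet,\ldots,Y^r_\bullet}(s) = \nu_{Y^1_\bullet,\ldots,Y^r_\bullet}(s') + (akm_1,0,\ldots,0\,;\,\cdots\,;\,akm_r,0,\ldots,0).
\end{align*}
For the first coordinate in each block, disjointness of $E_i$ from $E_j$ ($j\ne i$) makes $e_j$ a unit in a neighborhood of $E_i$, so $\nu_1^{(i)}(s)=akm_i+\nu_1^{(i)}(s')$. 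For the remaining coordinates, the ``purged'' sections
\begin{align*}
\bar{s}_1 \;=\; s\bigm/\prod\nolimits_{j}e_j^{\nu_1^{(j)}(s)} \qquad\text{and}\qquad \bar{s}'_1\;=\;s'\bigm/\prod\nolimits_{j}e_j^{\nu_1^{(j)}(s')}
\end{align*}
coincide as sections of the same line bundle, because the total exponents cancel exactly. Consequently, the restrictions $s_1^{(i)}=\bar{s}_1|_{E_i}$ match $(s'_1)^{(i)}=\bar{s}'_1|_{E_i}$, and the recursive construction of $\nu$ along the linear flag on $E_i\cong\mathbb{P}^{n-1}$ forces $\nu_j^{(i)}(s)=\nu_j^{(i)}(s')$ for all $j\ge 2$.

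\emph{Passage to convex bodies.} Aggregating the bijections $\Phi_k$ for $k\ge 1$ and translating graded-semigroup points by $\mathbf{w}_k = (akm_1,0,\ldots,0\,;\,\cdots\,;\,akm_r,0,\ldots,0\,;\,0)$ at each level $k$ yields the semigroup identity
\begin{align*}
\bigl\{(\nu,k)\in\Gamma_{Y^1_\bullet,\ldots,Y^r_\bullet}(\pi^{*}D):\nu_1^{(i)}\ge akm_i\ \text{for all } i\bigr\} \;=\; \Gamma_{Y^1_\bullet,\ldots,Y^r_\bullet}\!\bigl(\pi^{*}D-a\textstyle\sum_i m_iE_i\bigr)+\mathbf{w}_k.
\end{align*}
Taking the closed convex cone of both sides and intersecting with $\mathbb{R}^{nr}\times\{1\}$ yields the claimed identity of convex bodies. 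The main obstacle is verifying the compatibility $\bar{s}_1=\bar{s}'_1$ and confirming that subsequent valuation coordinates are indeed preserved; this is where the mutual disjointness of the $E_i$, which is the infinitesimal incarnation of the standing hypothesis $x_i\notin Y^j_1$ ($i\ne j$), plays the decisive role.
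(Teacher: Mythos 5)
Your proposal is correct and follows essentially the same route as the paper: reduce to the integral case by homogeneity and continuity, identify the sections of $m\pi^{*}D$ with $\nu_1^{(i)}(s)\ge mam_i$ with $H^0(\tilde X, m\pi^*D - ma\sum m_iE_i)$ via multiplication by the canonical sections of the $E_i$, observe that this shifts $\nu_{Y^1_\bullet,\ldots,Y^r_\bullet}$ by $(mam_1,0,\ldots;\;\cdots;\;mam_r,0,\ldots)$, and then pass to cones and slice. Your "purged section" discussion just makes explicit what the paper compresses into "In view of $\nu_{Y^1_\bullet,\ldots,Y^r_\bullet}$," and your grade-dependent shift $\mathbf{w}_k$ plays the same role as the paper's linear map $\phi_{(am_1,\ldots,am_r)}$.
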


\begin{proof}
By the continuity of the extended Okounkov bodies (Theorem \ref{thm:global}), we may assume that $D$ is a big $\mathbb{Q}$-divisor. Multiplying some scalars to both sides if necessary, we may assume that $a \in \mathbb{N}$ and that $D$ is integral. Note that 
\begin{align*}
H^{0}(\tilde{X},\OO_{\tilde{X}}(m\pi^{*}D-ma \cdot \sum_{i=1}^{r}m_{i}E_{i}))&=\{s \in H^{0}(\tilde{X},\OO_{\tilde{X}}(m\pi^{*}D)) \text{ }|\text{ } {\rm ord}_{E_{i}}(s) \ge mam_{i} \text{ for all $i$}\} \\
&=\{s \in H^{0}(\tilde{X},\OO_{\tilde{X}}(m\pi^{*}D)) \text{ }|\text{ } \nu_{1}^{(i)}(s) \ge mam_{i} \text{ for all $i$}\}.
\end{align*}
In view of $\nu_{Y^{1}_{\bullet}, \dots, Y^{r}_{\bullet}}$, 
\begin{align*}
{\Gamma_{Y^{1}_{\bullet}, \dots, Y^{r}_{\bullet}}(\pi^{*}D)}_{\nu_{1}^{(1)} \ge am_{1}, \dots, \nu_{1}^{(r)} \ge am_{r}}=\phi_{(am_{1},\dots,am_{r})}(\Gamma_{Y^{1}_{\bullet}, \dots, Y^{r}_{\bullet}}(\pi^{*}D-a \cdot \sum m_{i}E_{i})), 
\end{align*}
where $\phi_{(am_{1},\dots,am_{r})}:\mathbb{N}^{nr}\times \mathbb{N} \rightarrow \mathbb{N}^{nr}\times \mathbb{N}$, $(s, m) \mapsto (s+mam_{1}\nu_{1}^{(1)}+\dots+mam_{r}\nu_{1}^{(r)}, m)$. \

Passing to cones, 
\begin{align*}
{\rm Cone}({\Gamma_{Y^{1}_{\bullet}, \dots, Y^{r}_{\bullet}}(\pi^{*}D)}_{\nu_{1}^{(1)} \ge am_{1}, \dots, \nu_{1}^{(r)} \ge am_{r}})=\phi_{(am_{1},\dots,am_{r}),\mathbb{R}}({\rm Cone}(\Gamma_{Y^{1}_{\bullet}, \dots, Y^{r}_{\bullet}}(\pi^{*}D-a \cdot \sum m_{i}E_{i})),
\end{align*}
where $\phi_{(am_{1},\dots,am_{r}),\mathbb{R}}:\mathbb{R}^{nr}\times \mathbb{R} \rightarrow \mathbb{R}^{nr}\times \mathbb{R}$ is a natural map induced by $\phi_{(am_{1},\dots,am_{r})}$. Cutting them by $\mathbb{R}^{nr} \times \{1\}$, the conclusion holds.
\end{proof}


\begin{prop} \label{prop:jet separation}
Let $D$ be a big line bundle on $X$. Assume that there exists $\epsilon \in \mathbb{R}_{>0}$ and $k \in \mathbb{N}$ such that $\Delta_{(n+km_{1}+\epsilon,\dots,n+km_{r}+\epsilon)}^{-r} \subseteq \widetilde{\Delta}_{Y^{1}_{\bullet}, \dots, Y^{r}_{\bullet}}(D)$ for all infinitesimal flags $Y^{1}_{\bullet}, \dots, Y^{r}_{\bullet}$ over $x_{1}, \dots, x_{r}$. Then $K_{X}+D$ separates $k\cdot {\bf m}$-jets at $x_{1},\dots, x_{r}$, where ${\bf m}=(m_{1},\dots,m_{r})$. 
\end{prop}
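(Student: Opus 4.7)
My strategy is to convert the simplex-containment hypothesis into a big-and-nef statement on the blow-up $\pi:\tilde{X}\to X$ at $x_{1},\dots,x_{r}$, and then apply Kawamata--Viehweg vanishing together with the projection formula to deduce jet separation for $K_{X}+D$.

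\textbf{Step 1 (Positivity on $\tilde{X}$).} Setting $\xi_{i}=n+km_{i}+\epsilon$, I would first show that the $\mathbb{R}$-divisor $\pi^{*}D-\sum_{i=1}^{r}\xi_{i}E_{i}$ is big and nef on $\tilde{X}$. Bigness follows from Lemma \ref{lem:slice} (the proof of which extends verbatim to non-proportional weights $(\xi_{1},\dots,\xi_{r})$ in place of $(am_{1},\dots,am_{r})$), together with the observation that the ``top'' vertex ${\bf v}_{1}^{(\xi_{1},\dots,\xi_{r})}+{\bf v}_{n}^{(\xi_{1},\dots,\xi_{r})}$ of $\Delta^{-r}_{(\xi_{1},\dots,\xi_{r})}$ lies in $\widetilde{\Delta}_{Y^{1}_{\bullet},\dots,Y^{r}_{\bullet}}(D)$. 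For nefness I would follow the argument proving Theorem \ref{thm:multi-point Seshadri constants}, adapted from the proportional weight vector $(am_{1},\dots,am_{r})$ to the real weight tuple $(\xi_{1},\dots,\xi_{r})$; this is where the quantifier ``for all infinitesimal flags'' in the hypothesis is used in an essential way. Lowering the coefficient of each $E_{i}$ from $\xi_{i}$ to $n+km_{i}$ only improves positivity: for $C\subset E_{j}$ the intersection is $(n+km_{j})\deg(C)>0$ because $\pi^{*}D|_{E_{j}}=0$, the $E_{i}$ with $i\neq j$ are disjoint from $E_{j}$, and $-E_{j}|_{E_{j}}\cong\mathcal{O}_{E_{j}}(1)$ is ample; while for $C\not\subset\bigcup_{j}E_{j}$ decreasing each $\xi_{i}$ only raises $(\pi^{*}D-\sum\xi_{i}E_{i})\cdot C$. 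Therefore $\pi^{*}D-\sum_{i=1}^{r}(n+km_{i})E_{i}$ is also big and nef on $\tilde{X}$.

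\textbf{Step 2 (Vanishing and descent).} Kawamata--Viehweg vanishing applied to this big-and-nef divisor yields, after using the ramification formula $K_{\tilde{X}}=\pi^{*}K_{X}+(n-1)\sum_{i=1}^{r}E_{i}$,
\begin{align*}
H^{1}\!\left(\tilde{X},\ \pi^{*}(K_{X}+D)-\sum_{i=1}^{r}(km_{i}+1)E_{i}\right)=0.
\end{align*}
The short exact sequence associated with $Z=\sum_{i=1}^{r}(km_{i}+1)E_{i}$ then gives a surjection $H^{0}(\tilde{X},\pi^{*}(K_{X}+D))\twoheadrightarrow H^{0}(Z,\pi^{*}(K_{X}+D)|_{Z})$. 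Pushing forward along $\pi$ and using the projection formula together with the smooth-point identity $\pi_{*}\mathcal{O}_{\tilde{X}}(-\sum(km_{i}+1)E_{i})=\bigoplus_{i}m_{x_{i}}^{km_{i}+1}$, this descends to the surjection
\begin{align*}
H^{0}(X,K_{X}+D)\twoheadrightarrow H^{0}\!\left(X,(K_{X}+D)\otimes\bigoplus_{i=1}^{r}\mathcal{O}_{X}/m_{x_{i}}^{km_{i}+1}\right),
\end{align*}
which is precisely the separation of $k\mathbf{m}$-jets at $x_{1},\dots,x_{r}$ in the sense of Definition \ref{defn:multi jet separation}.

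\textbf{Main obstacle.} The crux is the nefness portion of Step 1. Theorem \ref{thm:multi-point Seshadri constants} handles only the proportional weight case $(am_{1},\dots,am_{r})$, whereas the hypothesis uses the non-proportional tuple $(n+km_{i}+\epsilon)_{i}$; I need to verify that the argument of Theorem \ref{thm:multi-point Seshadri constants}, and in particular the simplex-implies-Seshadri-bound direction of Lemma \ref{lem:inequality} together with its converse, passes to arbitrary positive real weight tuples. This should be a routine scaling/continuity argument, but it is the step that genuinely uses the hypothesis for \emph{every} infinitesimal flag. Once this positivity is secured, the remainder is a standard Kawamata--Viehweg plus projection formula argument.
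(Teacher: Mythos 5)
There is a genuine gap, and it is concentrated in Step~1: you cannot establish nefness of $\pi^{*}D-\sum_{i}\xi_{i}E_{i}$ (or of $\pi^{*}D-\sum_{i}(n+km_{i})E_{i}$) from the hypothesis without circularity. The passage from ``the extended Okounkov body contains a large inverted simplex for all flags'' to ``the corresponding twist down by the exceptional divisors is nef'' is exactly the hard direction of Theorem~\ref{thm:multi-point Seshadri constants}, and the paper's proof of that theorem \emph{uses} Proposition~\ref{prop:jet separation} (via the jet-separation bound $s_{\bf m}(K_{X}+mtD)\ge mt\alpha-n$ and Lemma~\ref{lem:continuity}). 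Your own ``Main obstacle'' paragraph proposes to fill the gap by adapting the argument of Theorem~\ref{thm:multi-point Seshadri constants}; this would make the logic circular. Nor is there an elementary direct argument: the simplex hypothesis controls valuation vectors of sections, hence bigness of $B=\pi^{*}D-\sum_{i}(n+km_{i})E_{i}$ (via Lemma~\ref{lem:slice}, as you say), but it does not control intersection numbers with arbitrary curves $C\not\subset\bigcup E_{j}$, which is what nefness requires and what your Step~1 leaves unaddressed.

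The paper's proof avoids this entirely by never asking for nefness. It only needs that $B$ is big, and then runs Nadel vanishing: $H^{1}(\tilde X,\mathcal O_{\tilde X}(K_{\tilde X}+B)\otimes\mathcal J(\tilde X,\lVert B\rVert))=0$ holds for any big $B$ by \cite[Theorem~11.2.12(ii)]{L}. The place where the quantifier ``for all infinitesimal flags'' is genuinely used is not to prove nefness but to show that the multiplier ideal does not interfere with the exceptional divisors: since ${\bf 0}\in\Delta_{Y^{1}_{\bullet},\dots,Y^{r}_{\bullet}}(B)$ for all infinitesimal flags, \cite[Theorem~2.1]{KL1506} applied to each projected Okounkov body gives ${\bf B}_{-}(B)\cap E_{i}=\emptyset$, and then \cite[Corollary~2.10]{ELMNP2} yields ${\rm Zeroes}(\mathcal J(\tilde X,\lVert B\rVert))\cap E_{i}=\emptyset$. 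This lets the short exact sequence in the paper split the cokernel into $\mathcal O_{(k+1)E_{i}}$-pieces and gives the surjection onto $H^{0}$ of the thickened exceptional divisors, exactly the jet-separation statement. Your Step~2 (projection formula and descent to $X$) would be fine if Step~1 were available, but the correct fix is to replace the Kawamata--Viehweg argument by the Nadel-vanishing-with-multiplier-ideal argument, which only requires bigness of $B$ and uses the ``all flags'' hypothesis in the way described above.
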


\begin{proof}
First, we deal with the mono-graded case, i.e. $m_{1}=\dots=m_{r}=1$. Projection formula implies that it suffices to show that 
\begin{align*}
H^{0}(\tilde{X}, \OO_{\tilde{X}}(\pi^{*}(K_{X}+D))) \rightarrow H^{0}(\tilde{X}, \OO_{\tilde{X}}(\pi^{*}(K_{X}+D)) \otimes (\oplus_{i=1}^{r}\OO_{\tilde{X}}/\OO_{\tilde{X}}(-(k+1)E_{i})))
\end{align*}
is surjective. Let $B:=\pi^{*}D-\sum_{i=1}^{r}(n+k)E_{i}$. Since $\Delta_{n+k+\epsilon}^{-r} \subseteq {\widetilde{\Delta}_{Y^{1}_{\bullet}, \dots, Y^{r}_{\bullet}}(D)}$, $B$ is big on $\tilde{X}$. So by Lemma \ref{lem:slice}, 
\begin{align*}
{\Delta_{Y^{1}_{\bullet}, \dots, Y^{r}_{\bullet}}(B)}={\widetilde{\Delta}_{Y^{1}_{\bullet}, \dots, Y^{r}_{\bullet}}(D)}_{\nu_{1}^{(1)}, \dots, \nu_{1}^{(r)} \ge n+k}-(n+k,0,\dots,0 \text{ ; } \dots \text{ ; }n+k,0,\dots,0).
\end{align*}
Moreover, since ${\Delta_{Y^{1}_{\bullet}, \dots, Y^{r}_{\bullet}}(B)}$ contains the origin, so is $\Delta_{Y^{i}_{\bullet}}(B)$ for each $i=1, \dots, r$. Since it holds for any infinitesimal flags $Y^{i}_{\bullet}$, any point $z$ in $E_{i}$ is not contained in ${\rm \bf{B}}_{-}(B)$ by \cite[Theorem 2.1]{KL1506}, that is, ${\rm \bf{B}}_{-}(B) \cap E_{i}=\emptyset$ for all $i=1, \dots, r$. Then \cite[Corollary 2.10]{ELMNP2} gives that ${\rm Zeroes}(\mathcal{J}(X,||B||)) \cap E_{i} =\emptyset$ for all $i=1, \dots, r$. \

Note that $K_{\tilde{X}}=\pi^{*}K_{X}+\sum_{i=1}^{r}(n-1)E_{i}$. Thus we have the short exact sequence
\begin{align*}
0 \rightarrow &\OO_{\tilde{X}}(K_{\tilde{X}}+B) \otimes \mathcal{J}(\tilde{X},||B||) \rightarrow \OO_{\tilde{X}}(\pi^{*}(K_{X}+D)) \rightarrow \mathcal{F} \rightarrow 0, 
\end{align*}
where $\mathcal{F}=\OO_{\tilde{X}}(\pi^{*}(K_{X}+D)) \otimes (\OO_{{\rm Zeroes}(\mathcal{J}(\tilde{X},||B||))} \oplus \OO_{(k+1)E_{1}} \oplus \dots \oplus \OO_{(k+1)E_{r}})$ since ${\rm Zeroes}(\mathcal{J}(\tilde{X},||B||))$, $E_{1}, \dots, E_{r}$ are all mutually disjoint. Since $B$ is big, by \cite[Theorem 11.2.12-(ii)]{L}, 
\begin{align*}
H^{1}(\tilde{X},\OO_{\tilde{X}}(K_{\tilde{X}}+B)\otimes \mathcal{J}(X,||B||))=0,
\end{align*}
that is, 
\begin{align*}
H^{0}(\tilde{X}, \OO_{\tilde{X}}(\pi^{*}(K_{X}+D))) \rightarrow H^{0}(\tilde{X}, \mathcal{F}) 
\end{align*}
is surjective. Hence the desired surjectivity holds. 

For a general ${\bf m} \in \mathbb{N}^{r}$, the proof is the same except $B=\pi^{*}D-\sum_{i=1}^{r}(n+km_{i})E_{i}$.
\end{proof}

We are in a position to connect the multi-weight moving Seshadri constants with the extended Okounkov bodies. 

\begin{defn} \label{defn:largest inverted standard simplex constant}
The largest inverted slice simplex constant $\xi_{\bf m}(D;x_{1},\dots,x_{r})$ of a big $\mathbb{R}$-divisor $D$ at $x_{1},\dots,x_{r}$ with multi-weight ${\bf m}=(m_{1},\dots,m_{r}) \in \mathbb{N}^{r}$ is the real number
\begin{align*}
\xi_{\bf m}(D;x_{1},\dots,x_{r}) \overset{\rm def}{=} \sup_{Y^{i}_{\bullet}} \{a \ge 0 \text{ } | &\text{ } \Delta_{(m_{1}a,\dots,m_{r}a)}^{-r} \subseteq \widetilde{\Delta}_{Y^{1}_{\bullet}, \dots, Y^{r}_{\bullet}}(D)\},
\end{align*}
where the supremum is taken over all infinitesimal flags $Y^{i}_{\bullet}$ over $x_{i}$. When $x_{i} \in {\rm \bf B}_{-}(D)$ for some $i$, we let $\xi_{\bf m}(D;x_{1},\dots,x_{r})=0$. 
\end{defn}

\begin{rmk} 
\begin{enumerate}[(1)]
\item $\xi_{\bf m}(D;x_{1},\dots,x_{r})$ can be interpreted as follows:
\begin{align*}
\xi_{\bf m}(D;x_{1},\dots,x_{r})=\sup_{Y^{i}_{\bullet}} \{a \ge 0 \text{ } | &\text{ } \Delta_{m_{1}a}^{-1} \times \dots \times \Delta_{m_{r}a}^{-1} \subseteq \widetilde{\Delta}_{Y^{1}_{\bullet}, \dots, Y^{r}_{\bullet}}(D)\},
\end{align*}
where the supremum is taken over all infinitesimal flags $Y^{i}_{\bullet}$ over $x_{i}$. This follows from the following easy fact: for a big $\mathbb{R}$-divisor $D$ on $X$ and any infinitesimal flags $Y^{1}_{\bullet},\dots,Y^{r}_{\bullet}$ over $x_{1},\dots,x_{r}$, we have ${\rm Conv}(\bigcup_{i=1}^{r}\Delta_{\epsilon(\lVert D \rVert;x_{i})}(D)) \subseteq \Delta_{Y^{1}_{\bullet},\dots,Y^{r}_{\bullet}}(D)$, where $\Delta_{\epsilon(\lVert D \rVert;x_{i})}(D)=(0, \dots, 0) \times \dots \times \Delta_{\epsilon(\lVert D \rVert;x_{i})}^{-1} \times \dots \times (0, \dots, 0) \subseteq \mathbb{R}^{nr}$ for each $i=1,\dots,r$. 
\item From \cite[Proposition 4.7]{KL1507}, we raise the following question:

Let $D$ be a big $\mathbb{R}$-divisor on $X$, and fix any infinitesimal flags $Y^{1}_{\bullet}, \dots, Y^{r}_{\bullet}$ over $x_{1},\dots,x_{r} \notin {\rm \bf B}_{+}(D)$. For a vector ${\bf m}=(m_{1},\dots,m_{r}) \in \mathbb{N}^{r}$, write
\begin{align*}
\xi_{{\bf m}; Y^{1}_{\bullet},\dots,Y^{r}_{\bullet}}(D;x_{1},\dots,x_{r}) \overset{\rm def}{=} \sup \{a \ge 0 \text{ } | &\text{ } \Delta_{(m_{1}a,\dots,m_{r}a)}^{-r} \subseteq \widetilde{\Delta}_{Y^{1}_{\bullet}, \dots, Y^{r}_{\bullet}}(D)\}.
\end{align*}
Then, does $\xi_{\bf m}(D;x_{1},\dots,x_{r})=\xi_{{\bf m};Y^{1}_{\bullet},\dots,Y^{r}_{\bullet}}(D;x_{1},\dots,x_{r})$ hold?
\end{enumerate}
\end{rmk}

Now, we are ready to prove our main result (Theorem \ref{thm:multi-point Seshadri constants}).

\begin{proof}[Proof of Theorem \ref{thm:multi-point Seshadri constants}]
We may assume that $D$ is a $\mathbb{Q}$-divisor since both invariants are continuous. Moreover, if $x_{i} \in {\rm \bf B}_{-}(D)$ for some $i$, the result is trivial. So we may assume that $x_{i} \notin {\rm \bf B}_{-}(D)$ for all $i$. 

First, we deal with the mono-graded case. For notational convenience, we omit the subscript $(1,\dots,1)$. The inequality $\epsilon(\lVert D \rVert;x_{1},\dots,x_{r}) \le \xi(D;x_{1},\dots,x_{r})$ follows from Lemma \ref{lem:inequality} and Proposition \ref{prop:restricted base loci}. For the reverse inequality, it is sufficient to show that $\epsilon(\lVert D \rVert;x_{1},\dots,x_{r}) \ge \alpha$, where $\alpha$ is any rational number that satisfies $\xi(D;x_{1},\dots,x_{r}) >\alpha$. Choose $t \in \mathbb{N}$ so that $t\alpha$ and $tD$ are integral. Since $\xi(D;x_{1},\dots,x_{r})>\alpha$, $\xi(mtD;x_{1},\dots,x_{r})>mt\alpha$ for any $m \in \mathbb{N}$. Also, Proposition \ref{prop:jet separation} gives that $s(K_{X}+mtD;x_{1},\dots,x_{r}) \ge mt\alpha-n$. By choosing $m \gg 0$, we may assume that $K_{X}+mtD$ is big. Then the superadditivity of $s(k(K_{X}+mtD);x_{1},\dots,x_{r})$ (see the proof of \cite[Lemma 3.7]{I}) induces the inequality 
\begin{align*}
\frac{s(k(K_{X}+mtD);x_{1},\dots,x_{r})}{k} \ge mt\alpha-n
\end{align*}
for any $k \ge 1$ and $m \gg 0$. By Lemma \ref{lem:continuity}, 
\begin{align*}
\epsilon(\lVert K_{X}+mtD \rVert;x_{1},\dots,x_{r})=\lim_{k \rightarrow \infty} \frac{s(k(K_{X}+mtD);x_{1},\dots,x_{r})}{k} \ge mt\alpha-n.
\end{align*}
Let $\beta_{m}=\frac{1}{m}$ so that the above arguments can be rephrased as 
\begin{align*}
\epsilon(\lVert \beta_{m} \cdot K_{X}+tD \rVert;x_{1},\dots,x_{r}) \ge t\alpha-n\beta_{m}
\end{align*}
for all $0<\beta_{m} \ll 1$. Again, by Lemma \ref{lem:continuity}, 
\begin{align*}
\epsilon(\lVert tD \rVert;x_{1},\dots,x_{r})=\lim_{\beta_{m} \rightarrow 0} \epsilon(\lVert \beta_{m} \cdot K_{X}+tD \rVert;x_{1},\dots,x_{r}) \ge t\alpha.
\end{align*}
Then the homogeneity of $\epsilon(-;x_{1},\dots,x_{r})$ gives the inequality $\epsilon(\lVert D \rVert;x_{1},\dots,x_{r}) \ge \xi(D;x_{1},\dots,x_{r})$.

For a general ${\bf m} \in \mathbb{N}^{r}$, the inequality $\epsilon_{\bf m}(\lVert D \rVert;x_{1},\dots,x_{r}) \le \xi_{\bf m}(D;x_{1},\dots,x_{r})$ follows from Lemma \ref{lem:inequality}. For the reverse inequality, it is also sufficient to show that $\epsilon_{\bf m}(\lVert D \rVert;x_{1},\dots,x_{r}) \ge \alpha$, where $\alpha$ is any rational number that satisfies $\xi_{\bf m}(D;x_{1},\dots,x_{r})>\alpha$. Choose $t \in \mathbb{N}$ such that $t\alpha$ and $tD$ are integral. Since $\xi_{\bf m}(D;x_{1},\dots,x_{r})>\alpha$, the inequality $\xi_{\bf m}(mtD;x_{1},\dots,x_{r})>mt\alpha$ holds for any $m \in \mathbb{N}$. That is, we have
\begin{align*}
\Delta_{(mt\alpha m_{1},\dots,mt\alpha m_{r})}^{-r} \subseteq \widetilde{\Delta}_{Y^{1}_{\bullet}, \dots, Y^{r}_{\bullet}}(mtD)
\end{align*}
for any infinitesimal flags $Y^{i}_{\bullet}$ over $x_{i}$. Let $\epsilon:=\min_{i}\{(m_{i}-1)n\}$. Since $mt\alpha m_{i} \ge n+(mt\alpha -n)m_{i}+\epsilon$, Proposition \ref{prop:jet separation} gives the inequality: 
\begin{align*}
s_{\bf m}(K_{X}+mtD;x_{1},\dots,x_{r}) \ge mt\alpha -n. 
\end{align*}
The rest of the arguments are exactly the same as in the mono-graded case, so we omit it. 
\end{proof}

Theorem \ref{thm:multi-point Seshadri constants} immediately gives the characterization of augmented base loci in terms of extended infinitesimal Okounkov bodies. 

\begin{cor} \label{cor:augmented base loci}
Let $D$ be a big $\mathbb{R}$-divisor on $X$. Then the following are equivalent. 
\begin{enumerate}[(1)]
\item $x_{1},\dots,x_{r} \notin {\rm \bf{B}}_{+}(D)$.
\item For any ${\bf m}=(m_{1},\dots,m_{r}) \in \mathbb{N}^{r}$ and every infinitesimal flags $Y^{1}_{\bullet},\dots, Y^{r}_{\bullet}$ over $x_{1},\dots,x_{r}$, there is $\xi_{\bf m}>0$, depending on ${\bf m}$, such that $\Delta^{-r}_{(m_{1} \xi_{\bf m}, \dots, m_{r} \xi_{\bf m})} \subseteq \widetilde{\Delta}_{Y^{1}_{\bullet}, \dots,Y^{r}_{\bullet}}(D)$.
\end{enumerate}
\end{cor}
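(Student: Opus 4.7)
The plan is to derive this corollary as a direct consequence of Theorem \ref{thm:multi-point Seshadri constants} (the identity $\epsilon_{\bf m}(\lVert D \rVert;x_1,\dots,x_r)=\xi_{\bf m}(D;x_1,\dots,x_r)$) together with Lemma \ref{lem:inequality} (which guarantees the containment $\Delta^{-r}_{(\eta_1,\dots,\eta_r)} \subseteq \widetilde{\Delta}_{Y^1_\bullet,\dots,Y^r_\bullet}(D)$ with $\eta_i=m_i\epsilon_{\bf m}(\lVert D\rVert;x_1,\dots,x_r)$ for \emph{every} infinitesimal flag). The key observation is that by Definition \ref{defn:multi-weight}, the number $\epsilon_{\bf m}(\lVert D \rVert;x_1,\dots,x_r)$ is strictly positive if and only if $x_1,\dots,x_r \notin {\rm \bf B}_+(D)$: the vanishing is built into the definition in the bad case, and in the good case one picks a decomposition $f^*D=A+E$ with $A$ ample, so the multi-weight Seshadri constant $\epsilon_{\bf m}(A;f^{-1}(x_1),\dots,f^{-1}(x_r))$ of the ample divisor $A$ is itself positive, making the supremum positive.

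For the implication $(1)\Rightarrow (2)$, I assume $x_1,\dots,x_r \notin {\rm \bf B}_+(D)$, so $\epsilon_{\bf m}(\lVert D \rVert;x_1,\dots,x_r)>0$ for any ${\bf m}\in \mathbb{N}^r$. Setting $\xi_{\bf m}:=\epsilon_{\bf m}(\lVert D \rVert;x_1,\dots,x_r)>0$, Lemma \ref{lem:inequality} immediately yields the inclusion
\begin{align*}
\Delta^{-r}_{(m_1 \xi_{\bf m},\dots,m_r \xi_{\bf m})} \subseteq \widetilde{\Delta}_{Y^1_\bullet,\dots,Y^r_\bullet}(D)
\end{align*}
for every choice of infinitesimal flags $Y^1_\bullet,\dots,Y^r_\bullet$ over $x_1,\dots,x_r$, which is exactly condition $(2)$.

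For the converse $(2)\Rightarrow (1)$, I argue contrapositively (though one can argue directly as well). Suppose $(2)$ holds. Fix any ${\bf m}\in \mathbb{N}^r$; by hypothesis there is some $\xi_{\bf m}>0$ and some infinitesimal flags with $\Delta^{-r}_{(m_1\xi_{\bf m},\dots,m_r\xi_{\bf m})}\subseteq \widetilde{\Delta}_{Y^1_\bullet,\dots,Y^r_\bullet}(D)$. Taking the supremum over all infinitesimal flags in Definition \ref{defn:largest inverted standard simplex constant} gives $\xi_{\bf m}(D;x_1,\dots,x_r)\ge \xi_{\bf m}>0$, and then Theorem \ref{thm:multi-point Seshadri constants} forces $\epsilon_{\bf m}(\lVert D \rVert;x_1,\dots,x_r)>0$. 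By the dichotomy in Definition \ref{defn:multi-weight}, this rules out $x_i \in {\rm \bf B}_+(D)$ for any $i$, so $(1)$ holds.

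There is essentially no obstacle: all the technical work has been absorbed into Theorem \ref{thm:multi-point Seshadri constants} and Lemma \ref{lem:inequality}. The only subtle point worth checking carefully is the equivalence $\epsilon_{\bf m}(\lVert D \rVert;x_1,\dots,x_r)>0 \iff x_1,\dots,x_r \notin {\rm \bf B}_+(D)$, and in particular ensuring that the convention $\epsilon_{\bf m}(\lVert D \rVert;x_1,\dots,x_r)=0$ for $x_i \in {\rm \bf B}_+(D)$ in Definition \ref{defn:multi-weight} is consistent with $\xi_{\bf m}(D;x_1,\dots,x_r)=0$ under the same hypothesis (for this note that if some $x_i\in{\rm \bf B}_+(D)\supseteq {\rm \bf B}_-(D)$ one can appeal to Definition \ref{defn:largest inverted standard simplex constant} together with the restricted base locus case treated via Proposition \ref{prop:restricted base loci}).
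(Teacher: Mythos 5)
Your argument is correct and is exactly the route the paper intends: the paper states the corollary as an "immediate" consequence of Theorem \ref{thm:multi-point Seshadri constants} without further proof, and your write-up fills in precisely those details, using Lemma \ref{lem:inequality} to obtain the containment for \emph{every} infinitesimal flag (which is what condition (2) demands, not merely positivity of the supremum $\xi_{\bf m}(D;x_1,\dots,x_r)$) and using the dichotomy built into Definition \ref{defn:multi-weight} to translate $\epsilon_{\bf m}(\lVert D\rVert;x_1,\dots,x_r)>0$ back to $x_1,\dots,x_r\notin{\rm\bf B}_+(D)$.
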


The existence of an effective divisor with prescribed vanishing behavior is an interesting problem in itself, and the presence of valuative points gives an answer to this question. However, from the definition of (extended) Okounkov bodies, it is quite unclear which rational points are valuative (\cite[Definition 1.7]{KL1507}), and the problem is much more difficult when it comes to boundary points, e.g. \cite[Corollary D]{KL1507} or \cite[Question 7.3]{LM}). A by-product of our result is about the existence of valuative boundary points. 

\begin{cor} \label{cor:valuative}
Let $D$ be a big $\mathbb{R}$-divisor on $X$, $Y^{1}_{\bullet},\dots,Y^{r}_{\bullet}$ infinitesimal flags over $x_{1},\dots,x_{r}$, and ${\bf m}=(m_{1},\dots,m_{r}) \in \mathbb{N}^{r}$. If $\Delta^{-r}_{(m_{1} \xi, \dots, m_{r} \xi)} \subseteq \widetilde{\Delta}_{Y^{1}_{\bullet}, \dots, Y^{r}_{\bullet}}(D)$ for a rational $\xi \neq \epsilon_{\bf m}(\lVert D \rVert;x_{1},\dots,x_{r})$, then all vertice points of $\Delta^{-r}_{(m_{1} \xi, \dots, m_{r} \xi)}$ are valuative. 
\end{cor}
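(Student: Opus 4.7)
The plan is to combine the hypothesis with Theorem \ref{thm:multi-point Seshadri constants} to force the strict inequality $\xi<\epsilon_{\bf m}(\lVert D\rVert;x_{1},\dots,x_{r})$, and then to rerun the explicit section-construction from the proof of Lemma \ref{lem:inequality} with $\xi$ in place of $\epsilon$, thereby producing sections of $m\pi^{*}D$ whose $\nu_{Y^{1}_{\bullet},\dots,Y^{r}_{\bullet}}$-values land exactly on $m$ times each vertex of $\Delta^{-r}_{(m_{1}\xi,\dots,m_{r}\xi)}$.

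First I would note that the containment $\Delta^{-r}_{(m_{1}\xi,\dots,m_{r}\xi)}\subseteq\widetilde{\Delta}_{Y^{1}_{\bullet},\dots,Y^{r}_{\bullet}}(D)$ yields $\xi\le \xi_{\bf m}(D;x_{1},\dots,x_{r})$ directly from Definition \ref{defn:largest inverted standard simplex constant}, and Theorem \ref{thm:multi-point Seshadri constants} identifies the right-hand side with $\epsilon_{\bf m}(\lVert D\rVert;x_{1},\dots,x_{r})$. Combined with $\xi\ne \epsilon_{\bf m}(\lVert D\rVert;x_{1},\dots,x_{r})$ this produces the strict inequality $\xi<\epsilon_{\bf m}(\lVert D\rVert;x_{1},\dots,x_{r})$; since $\xi\ge 0$, the right-hand side is strictly positive, so $x_{1},\dots,x_{r}\notin\mathbf{B}_{+}(D)$ by Definition \ref{defn:multi-weight}.

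Next I would reuse the construction in the proof of Lemma \ref{lem:inequality} with $\xi$ in place of $\epsilon$. In the ample case, $K_{\xi}:=\pi^{*}D-\xi\sum_{i=1}^{r}m_{i}E_{i}$ is still ample because $\xi<\epsilon_{\bf m}(\lVert D\rVert;x_{1},\dots,x_{r})$, and for $m\gg 0$ the short exact sequence
\[
0\to \mathcal{O}_{\tilde{X}}\bigl(mK_{\xi}-\textstyle\sum_{i=1}^{r} E_{i}\bigr)\to \mathcal{O}_{\tilde{X}}(mK_{\xi})\to \bigoplus_{i=1}^{r}\mathcal{O}_{E_{i}}(mm_{i}\xi)\to 0
\]
remains cohomologically exact on global sections. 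Choosing, on each $E_{i}\cong\mathbb{P}^{n-1}$, a section of $\mathcal{O}_{E_{i}}(mm_{i}\xi)$ with prescribed valuation along the infinitesimal subflag $Y^{i}_{\bullet}$, lifting simultaneously to $H^{0}(\tilde{X},mK_{\xi})$, and finally tensoring by the canonical section of $\mathcal{O}_{\tilde{X}}(m\xi\sum m_{i}E_{i})$, produces sections $s^{(0)},\dots,s^{(n-1)}\in H^{0}(\tilde{X},\mathcal{O}_{\tilde{X}}(m\pi^{*}D))$ whose $\nu_{Y^{1}_{\bullet},\dots,Y^{r}_{\bullet}}$-values are exactly $m\mathbf{v}^{(m_{1}\xi,\dots,m_{r}\xi)}_{1}$ and $m\bigl(\mathbf{v}^{(m_{1}\xi,\dots,m_{r}\xi)}_{1}+\mathbf{v}^{(m_{1}\xi,\dots,m_{r}\xi)}_{j}\bigr)$ for $j=2,\dots,n$, certifying the valuativity of the $n$ non-origin vertices. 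The origin $\mathbf{0}$ is also valuative: since $x_{i}\notin\mathbf{B}_{+}(D)$, there exist $m\in\mathbb{N}$ and $s\in H^{0}(X,\mathcal{O}_{X}(mD))$ with $s(x_{i})\ne 0$ for every $i$, and the pullback $\pi^{*}s$ restricts to a non-zero constant on each $E_{i}$, forcing $\nu_{Y^{1}_{\bullet},\dots,Y^{r}_{\bullet}}(\pi^{*}s)=\mathbf{0}$. The big case reduces to the ample one by the resolution-and-splitting trick used in the second half of the proof of Lemma \ref{lem:inequality}: pick $f\colon Y\to X$ isomorphic near the $x_{i}$'s with $f^{*}D=A+E$, $A$ ample and $f^{-1}(x_{i})\notin\mathrm{Supp}(E)$, run the ample construction for $A$ on the blowup of $Y$ at $f^{-1}(x_{1}),\dots,f^{-1}(x_{r})$, and tensor the resulting sections by a fixed section of $mE$, which leaves the valuations at the $f^{-1}(x_{i})$'s unchanged.

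The main obstacle is purely bookkeeping: verifying that the valuation vectors output by the Koszul-type surjection land exactly on the vertices of $\Delta^{-r}_{(m_{1}\xi,\dots,m_{r}\xi)}$ rather than in its relative interior. The strict inequality $\xi<\epsilon_{\bf m}(\lVert D\rVert;x_{1},\dots,x_{r})$ obtained in the first step is precisely what secures the ampleness of $K_{\xi}$ and the surjectivity on global sections that the construction requires; without this gap the construction of Lemma \ref{lem:inequality} would break down at the boundary value $\xi$.
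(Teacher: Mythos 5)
Your proposal is correct and follows essentially the same route as the paper, which in the relevant spot simply notes that the hypotheses combined with Theorem~\ref{thm:multi-point Seshadri constants} force $\xi<\epsilon_{\bf m}(\lVert D\rVert;x_{1},\dots,x_{r})$ and then declares the conclusion "an immediate consequence of the proof of Lemma~\ref{lem:inequality}." You have filled in exactly the content the paper gestures at: the strict inequality secures ampleness of $K_{\xi}$ (or, in the big case, of $A$ after passing to a suitable resolution), the short exact sequence on the blow-up then yields lifts realizing $m\,\mathbf{v}_{1}^{(m_{1}\xi,\dots,m_{r}\xi)}$ and $m(\mathbf{v}_{1}^{(m_{1}\xi,\dots,m_{r}\xi)}+\mathbf{v}_{j}^{(m_{1}\xi,\dots,m_{r}\xi)})$ as exact valuation vectors, and tensoring by a divisorial section of $E$ in the big case preserves the valuations at the $f^{-1}(x_{i})$. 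Your explicit treatment of the origin (taking a section of $mD$ nonvanishing at all $x_{i}$, possible because finitely many points lie outside $\mathbf{B}_{+}(D)$, so that $\pi^{*}s$ restricts to a nonzero constant on each $E_{i}$) is an improvement in completeness, since the proof of Lemma~\ref{lem:inequality} records $\mathbf{0}\in\widetilde{\Delta}_{Y^{1}_{\bullet},\dots,Y^{r}_{\bullet}}(D)$ via Proposition~\ref{prop:restricted base loci} rather than by exhibiting a section with valuation $\mathbf{0}$.
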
 

\begin{proof}
Fix a rational number $\xi$ that satisfies the above condition. By Theorem \ref{thm:multi-point Seshadri constants}, $\Delta^{-r}_{(m_{1} \xi, \dots, m_{r} \xi)} \subseteq \widetilde{\Delta}_{Y^{1}_{\bullet}, \dots, Y^{r}_{\bullet}}(D)$ means that $\xi < \epsilon_{\bf m}(\lVert D \rVert;x_{1},\dots,x_{r})$. Then this is an immediate consequence of the proof of Lemma \ref{lem:inequality}. 
\end{proof}

\end{subsection}

\end{section}

\begin{section}{Applications} \label{section:6}

\begin{subsection}{Nagata's conjecture and volumes} \label{subsection:Nagata}

In this subsection, we see how the Nagata conjecture is interpreted in terms of convex bodies and study volumes of slices of the extended infinitesimal Okounkov bodies. In his work (\cite{N}), Nagata made the following conjecture:

\begin{conj} \label{conj:Nagata} (Nagata)
Let $\pi:{\rm Bl}_{r}(\mathbb{P}^{2}) \rightarrow \mathbb{P}^{2}$ be the blow-up of $\mathbb{P}^{2}$ at $r \ge 9$ general points. If $\pi^{*}\OO_{\mathbb{P}^{2}}(d)-\sum_{i=1}^{r}m_{i}E_{i}$ is effective, then 
\begin{align*}
d \ge \frac{1}{\sqrt{r}} \cdot \sum_{i=1}^{r}m_{i}. 
\end{align*}
\end{conj}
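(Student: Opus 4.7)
The plan is to translate Conjecture \ref{conj:Nagata} into a containment of an inverted simplex inside an extended infinitesimal Okounkov body and identify what additional input the paper's machinery would need. Since Nagata's conjecture is a well-known open problem for $r\ge 10$, what follows is a strategy of reduction rather than a complete proof.

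First, I would reformulate the effectivity hypothesis in valuative terms: a section cutting out an effective representative of $\pi^{*}\OO_{\mathbb{P}^{2}}(d)-\sum_{i=1}^{r}m_{i}E_{i}$ has ${\rm ord}_{E_{i}}\ge m_{i}$, so for any infinitesimal flags $Y^{1}_{\bullet},\dots,Y^{r}_{\bullet}$ the point $\sum_{i=1}^{r}(m_{i}/d)\nu_{1}^{(i)}$ lies in $\widetilde{\Delta}_{Y^{1}_{\bullet},\dots,Y^{r}_{\bullet}}(\OO_{\mathbb{P}^{2}}(1))$. In the mono-weight case Conjecture \ref{conj:Nagata} is classically equivalent to the Seshadri bound $\epsilon(\OO_{\mathbb{P}^{2}}(1);x_{1},\dots,x_{r})=1/\sqrt{r}$, which by Theorem \ref{thm:multi-point Seshadri constants} becomes the sharp statement $\xi(\OO_{\mathbb{P}^{2}}(1);x_{1},\dots,x_{r})=1/\sqrt{r}$ about the largest inscribed inverted simplex. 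The general multi-weight version should follow from the analogous sharp-inscription statement along the slice $S_{(m_{1},\dots,m_{r})}$ via the weighted form of Theorem \ref{thm:multi-point Seshadri constants}.

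Second, I would split the resulting convex-geometric statement into two directions. The upper bound $\xi\le 1/\sqrt{r}$ should follow by volume comparison: combining the $\mathbb{P}^{2}$ case of Conjecture \ref{conj:Shin} (slice volume $=1/2$, since the factor $(\sqrt{r})^{n-2}=1$ when $n=2$) with the elementary inequality ${\rm vol}(\Delta^{-r}_{a})\le$ slice volume forces $a\le 1/\sqrt{r}$. The main obstacle is the reverse direction $\xi\ge 1/\sqrt{r}$, which is the deep content of Nagata: one must exhibit, for a dense family of rational points in $\Delta^{-r}_{1/\sqrt{r}}$, actual sections of $\OO_{\mathbb{P}^{2}}(d)$ whose multiplicities approach $d/\sqrt{r}$ at every $x_{i}$ simultaneously. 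The extended Okounkov body is an a posteriori record of such sections, not a construction mechanism; absent a new geometric input (the SHGH conjecture, or a section-theoretic strengthening of Conjecture \ref{conj:Shin}), my proposed strategy stops at a clean reformulation rather than a proof, and I would expect the final subsection of the paper to develop the volume side of this picture rather than attempt the conjecture itself.
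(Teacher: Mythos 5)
You are right that Conjecture~\ref{conj:Nagata} is an open problem that the paper does not prove; it is stated only to motivate the convex-geometric reformulation that follows, namely Proposition~\ref{prop:Nagata}, which shows Nagata is equivalent to $\Delta_{\mu_{r}}^{-r}\subseteq\widetilde{\Delta}_{Y^{1}_{\bullet},\dots,Y^{r}_{\bullet}}(\OO_{\mathbb{P}^{2}}(1))$ for all infinitesimal flags, and that either statement forces the slice volume to be $1/2$. Your reduction matches this: the translation to simplex inscription via Theorem~\ref{thm:multi-point Seshadri constants}, the appeal to the $\mathbb{P}^{2}$ case of Conjecture~\ref{conj:Shin} (Proposition~\ref{prop:mono-graded}) on the volume side, and the correct diagnosis that the lower bound $\xi\ge 1/\sqrt{r}$ is the genuine open content are all aligned with what the paper does. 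One small point of difference: you route the easy upper bound $\xi\le 1/\sqrt{r}$ through a volume comparison, whereas the paper gets it more directly from the elementary observation that $\epsilon(\OO_{\mathbb{P}^{2}}(1);r)\le 1/\sqrt{r}$ (nef implies $L^{2}\ge 0$) together with Lemma~\ref{lem:upper bound}; both are fine, but the paper's version does not need Conjecture~\ref{conj:Shin} at all for that direction. Your final expectation about the organization of Section~\ref{section:6} is also accurate, so the proposal is a correct reading of the situation rather than a gap.
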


Taking this into account, it is clear that Conjecture \ref{conj:Nagata} is equivalent to saying that 
\begin{align*}
\epsilon(\OO_{\mathbb{P}^{2}}(1);r)=\frac{1}{\sqrt{r}}
\end{align*}
if $r \ge 9$. It still remains open, apart from the case when the number of blown up points $r$ is a square, which was settled by Nagata himself. Note that the inequality $\epsilon(\OO_{\mathbb{P}^{2}}(1);r) \le \frac{1}{\sqrt{r}}$ is obvious, so the Nagata conjecture implies that $\OO_{\mathbb{P}^{2}}(1)$ has its possible maximal positivity at $r$ general points. Some results on this conjecture can be found in \cite{B, BH1, BH2, ST, X}. 

For homogeneous linear systems, the Nagata conjecture reads as follows.
\begin{conj} \label{conj:homogeneous Nagata} (Homogeneous Nagata)
Let $\pi:{\rm Bl}_{r}(\mathbb{P}^{2}) \rightarrow \mathbb{P}^{2}$ be the blow-up of $\mathbb{P}^{2}$ at $r \ge 9$ general points. If $\pi^{*}\OO_{\mathbb{P}^{2}}(d)-\sum_{i=1}^{r}mE_{i}$ is effective, then 
\begin{align*}
d \ge \sqrt{r} \cdot m. 
\end{align*}
\end{conj}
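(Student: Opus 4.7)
Since Conjecture \ref{conj:homogeneous Nagata} coincides with the celebrated Nagata conjecture for homogeneous systems, I can only outline a strategy within the extended Okounkov body framework of the paper and mark where the essential obstruction sits. The bound to establish is equivalent to $\epsilon(\OO_{\mathbb{P}^{2}}(1);x_{1},\ldots,x_{r})=1/\sqrt{r}$ for $r\ge 9$ general points, and since the upper bound $\le 1/\sqrt{r}$ follows from $(\pi^{*}H-\epsilon\sum E_{i})^{2}\ge 0$, only the lower bound matters. By Theorem \ref{thm:multi-point Seshadri constants} applied with weight ${\bf m}=(1,\ldots,1)$, this lower bound is equivalent to
\[
\xi(\OO_{\mathbb{P}^{2}}(1);x_{1},\ldots,x_{r})\;\ge\;\tfrac{1}{\sqrt{r}},
\]
i.e.\ to producing infinitesimal flags $Y^{1}_{\bullet},\ldots,Y^{r}_{\bullet}$ over very general points with $\Delta^{-r}_{1/\sqrt{r}}\subseteq\widetilde{\Delta}_{Y^{1}_{\bullet},\ldots,Y^{r}_{\bullet}}(\OO_{\mathbb{P}^{2}}(1))$.

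The plan is to work on the homogeneous slice $S_{(1,\ldots,1)}\cong\mathbb{R}^{2}$. The mono-graded slice volume formula (Proposition \ref{prop:mono-graded}) gives
\[
\mathrm{vol}_{\mathbb{R}^{2}}\bigl(\widetilde{\Delta}(\OO_{\mathbb{P}^{2}}(1))\cap S_{(1,\ldots,1)}\bigr)=\frac{(\sqrt{r})^{0}}{2!}\cdot \mathrm{vol}_{X}(\OO_{\mathbb{P}^{2}}(1))=\frac{1}{2},
\]
while $\Delta^{-r}_{1/\sqrt{r}}$ sits in that slice as a planar inverted simplex of size $\sqrt{r}\cdot(1/\sqrt{r})=1$, hence of area $1/2$ as well. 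Equality of these two areas is precisely the tightness predicted by the conjecture, which suggests a shape-rigidity attack: I would try to upgrade volume matching to containment by combining Proposition \ref{prop:inclusion 2} to supply the symmetric ``spine'' of the target simplex through the constituent Okounkov bodies $\Delta_{Y^{i}_{\bullet}}(\OO_{\mathbb{P}^{2}}(1))$, and then use the Zariski-decomposition description of the surface case (Proposition \ref{prop:surfaces}) applied on $\mathrm{Bl}_{r}(\mathbb{P}^{2})$ to pin down the remaining coordinates of the slice.

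The main obstacle is that equality of volumes does not force containment, and realizing the extremal vertices ${\bf v}^{(1/\sqrt{r},\ldots,1/\sqrt{r})}_{1}$ and ${\bf v}^{(1/\sqrt{r},\ldots,1/\sqrt{r})}_{1}+{\bf v}^{(1/\sqrt{r},\ldots,1/\sqrt{r})}_{2}$ as valuations of honest sections amounts precisely to exhibiting plane curves of degree $d\to\infty$ through $r$ very general points with uniform multiplicity $m$ and $m/d\to 1/\sqrt{r}$ \textemdash\ the classical open half of Nagata's conjecture. Thus the proposal does not resolve Nagata; rather it reduces Conjecture \ref{conj:homogeneous Nagata} to a convex-geometric rigidity statement: namely, that the slice $\widetilde{\Delta}(\OO_{\mathbb{P}^{2}}(1))\cap S_{(1,\ldots,1)}$ is itself an inverted simplex, and not some other planar convex region of area $1/2$. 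The virtue of this reformulation is that the Zariski-chamber description on $\mathrm{Bl}_{r}(\mathbb{P}^{2})$ produces the explicit piecewise-linear functions $\alpha_{i},\beta_{i}$ of Subsection \ref{subsection:surfaces} whose global geometry encodes exactly the extremal rays needed, pinpointing the concrete convex-geometric input that would complete the proof.
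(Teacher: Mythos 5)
The statement you were asked to prove is Conjecture \ref{conj:homogeneous Nagata}, the homogeneous form of Nagata's conjecture: the paper offers no proof of it, and none is known, so no complete argument was possible here. Your proposal correctly recognizes this and does not claim one. The reduction you sketch is faithful to the paper's own treatment: the equivalence of the effectivity bound with $\epsilon(\OO_{\mathbb{P}^{2}}(1);x_{1},\dots,x_{r})=1/\sqrt{r}$ is exactly what Proposition \ref{prop:general relation} provides (and is how the paper's final, unlabeled corollary deduces that homogeneous Nagata implies Nagata), and the further translation into the containment $\Delta^{-r}_{1/\sqrt{r}}\subseteq\widetilde{\Delta}_{Y^{1}_{\bullet},\dots,Y^{r}_{\bullet}}(\OO_{\mathbb{P}^{2}}(1))$ via Theorem \ref{thm:multi-point Seshadri constants} is precisely the content of Proposition \ref{prop:Nagata}. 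Your diagnosis of the obstruction is also the right one: the slice-volume identity of Proposition \ref{prop:mono-graded} shows the candidate simplex has the same area as the slice, but equality of volumes does not yield containment, and producing the extremal valuation vectors as limits of honest sections is the open half of Nagata's conjecture. The one point to flag is that your appeal to Proposition \ref{prop:inclusion 2} cannot supply the needed ``spine'': it only yields the convex hull of the single-point Okounkov bodies placed on the coordinate blocks, whose homogeneous slice is far smaller than $\Delta^{-r}_{1/\sqrt{r}}$, so the rigidity statement you isolate is genuinely where all the difficulty lives.
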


Now, we look at the Nagata conjecture in terms of convex geometry. In this viewpoint, it is equivalent to saying that the inverted standard simplex $\Delta_{\xi}^{-r}$ can be embedded into $\widetilde{\Delta}_{Y^{1}_{\bullet},\dots,Y^{r}_{\bullet}}(\OO_{\mathbb{P}^{2}}(1))$ in its possible maximal size for all infinitesimal flags $Y^{1}_{\bullet}, \dots, Y^{r}_{\bullet}$ over $x_{1},\dots,x_{r}$. 

\begin{prop} \label{prop:Nagata}
Let $\pi:{\rm Bl}_{\{x_{1},\dots,x_{r}\}}(\mathbb{P}^{2}) \rightarrow \mathbb{P}^{2}$ be the blow-up of $\mathbb{P}^{2}$ at $r \ge 9$ general points $x_{1},\dots,x_{r}$ with exceptional divisors $E_{1},\dots,E_{r}$. Let $\mu_{r}=\sup \{m>0 \text{ } | \text{ } \pi^{*}\OO_{\mathbb{P}^{2}}(1)-m \cdot \sum_{i=1}^{r}E_{i} \text{ is big}\}$. Then the following are equivalent. 
\begin{enumerate}[(1)]
\item The Nagata conjecture holds for $r$. 
\item $\Delta_{\mu_{r}}^{-r} \subseteq \widetilde{\Delta}_{Y^{1}_{\bullet},\dots,Y^{r}_{\bullet}}(\OO_{\mathbb{P}^{2}}(1))$ for all infinitesimal flags $Y^{1}_{\bullet},\dots,Y^{r}_{\bullet}$ over $x_{1},\dots,x_{r}$. 
\end{enumerate}
Moreover, if either $(1)$ or $(2)$ holds, then 
\begin{enumerate}[(3)]
\item ${\rm vol}_{\mathbb{R}^{2}}({\widetilde{\Delta}_{Y^{1}_{\bullet},\dots,Y^{r}_{\bullet}}(\OO_{\mathbb{P}^{2}}(1))}_{\nu_{1}^{(1)}=\cdots=\nu_{1}^{(r)}, \text{ }\nu_{2}^{(1)}=\cdots=\nu_{2}^{(r)}})=\frac{1}{2}$ for all infinitesimal flags $Y^{1}_{\bullet},\dots,Y^{r}_{\bullet}$ over $x_{1},\dots,x_{r}$, where $\mathbb{R}^{2} = S_{(1,1)} \subset \mathbb{R}^{2r}$. 
\end{enumerate}
\end{prop}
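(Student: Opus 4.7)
My plan for $(1) \Leftrightarrow (2)$ is to combine Theorem \ref{thm:multi-point Seshadri constants} (which identifies $\xi(\OO_{\mathbb{P}^{2}}(1);x_{1},\dots,x_{r})$ with the moving Seshadri constant) with the universal chain
\[\epsilon(\OO_{\mathbb{P}^{2}}(1);x_{1},\dots,x_{r}) \le \tfrac{1}{\sqrt{r}} \le \mu_{r},\]
together with the general inequality $\epsilon \le \mu_{r}$ (ample implies big). The left bound follows from Nakai-Moishezon applied to $D_{a}:=\pi^{*}\OO_{\mathbb{P}^{2}}(1) - a\sum E_{i}$, whose self-intersection $1 - ra^{2}$ must be positive for ampleness; the right bound follows from surface Riemann-Roch, since for $a < 1/\sqrt{r}$ we have $D_{a}^{2}>0$ and $D_{a}\cdot H > 0$ for any ample $H=\pi^{*}\OO_{\mathbb{P}^{2}}(N)-\sum E_{i}$ with $N \gg 0$, so $D_{a}$ is big.

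For $(1) \Rightarrow (2)$: Conjecture \ref{conj:Nagata} itself forces $\mu_{r} \le 1/\sqrt{r}$, because otherwise a positive multiple of a big $D_{m}$ with $m > 1/\sqrt{r}$ would be effective and produce a plane curve of degree $k$ with total multiplicity $rkm > k\sqrt{r}$, violating the Nagata inequality. Combined with the universal chain, this gives $\epsilon = \mu_{r} = 1/\sqrt{r}$, and Lemma \ref{lem:inequality} then delivers $\Delta^{-r}_{\mu_{r}}=\Delta^{-r}_{\epsilon} \subseteq \widetilde{\Delta}_{Y^{1}_{\bullet},\dots,Y^{r}_{\bullet}}(\OO_{\mathbb{P}^{2}}(1))$ for every infinitesimal flag configuration, which is $(2)$. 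Conversely, $(2)$ yields $\xi \ge \mu_{r}$; invoking Theorem \ref{thm:multi-point Seshadri constants} and the universal chain then pins down $\epsilon = 1/\sqrt{r}$, which is precisely Conjecture \ref{conj:Nagata}.

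For $(3)$, assuming $\epsilon = \mu_{r} = 1/\sqrt{r}$, I would pin down the slice as an exact triangle by combining Proposition \ref{prop:surfaces} (upper bound) with condition $(2)$ (lower bound). Since $\pi^{*}\OO_{\mathbb{P}^{2}}(1)$ is already nef, the shifts $s_{i}$ in Proposition \ref{prop:surfaces} vanish; along the diagonal $\nu_{1}^{(1)}=\cdots=\nu_{1}^{(r)}=t_{1}$, the divisor $\pi^{*}\OO_{\mathbb{P}^{2}}(1)-t_{1}\sum E_{i}$ remains nef for $t_{1} \in [0,1/\sqrt{r}]$ (by Nagata), so its Zariski decomposition is trivial and $\alpha_{i} \equiv 0$, $\beta_{i}=t_{1}$. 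Proposition \ref{prop:surfaces} then confines the slice inside the triangle $T=\{(t_{1},t_{2}) : 0 \le t_{2} \le t_{1} \le 1/\sqrt{r}\}$, while the inclusion $\Delta^{-r}_{\mu_{r}} \subseteq \widetilde{\Delta}$ from $(2)$, combined with the direct check that $\Delta^{-r}_{\mu_{r}}$ meets $S$ in exactly $T$ (inspect the three vertices $\mathbf{0}$, ${\bf v}_{1}^{(\mu_{r},\dots,\mu_{r})}$, ${\bf v}_{1}^{(\mu_{r},\dots,\mu_{r})}+{\bf v}_{2}^{(\mu_{r},\dots,\mu_{r})}$), supplies the reverse inclusion. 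The slice is therefore $T$, and under the identification $S \cong \mathbb{R}^{2}$ induced by the Euclidean structure of $\mathbb{R}^{2r}$ (each diagonal basis direction being $\tfrac{1}{\sqrt{r}}(\nu_{j}^{(1)}+\cdots+\nu_{j}^{(r)})$), its area is $r \cdot \tfrac{1}{2r}=\tfrac{1}{2}$. The main obstacle is that Proposition \ref{prop:surfaces} in isolation only yields a containment (cf.\ Remark \ref{rmk:surfaces}); the tight slice description is secured only after pairing it with the reverse inclusion coming from $(2)$.
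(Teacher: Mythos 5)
Your argument is correct and reaches the same conclusions, but along a route that differs from the paper's in a couple of spots; one step in your $(1)\Rightarrow(2)$ direction is stated too loosely.

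For the equivalence, the paper does not invoke your ``universal chain'' $1/\sqrt{r}\le\mu_r$ via surface Riemann--Roch. Instead, starting from $(2)$ it first concludes $\epsilon=\mu_r$ (using Theorem~\ref{thm:multi-point Seshadri constants} and the maximality of $\mu_r$ among admissible inverted-simplex sizes), and then shows $\epsilon\ge 1/\sqrt{r}$ by contradiction: if not, $\pi^*H-\frac{1}{\sqrt{r}}\sum E_i$ would fail to be pseudo-effective, and a Cauchy--Schwarz estimate applied to any ample class $eH-\sum n_iE_i$ with $e^2>\sum n_i^2$ produces an immediate contradiction. Your Riemann--Roch argument for $\mu_r\ge 1/\sqrt{r}$ (self-intersection positive plus positive intersection with an ample) is an equally standard and somewhat more direct way to get the same inequality, so this is a legitimate alternative. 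In $(1)\Rightarrow(2)$, however, your written reasoning only extracts $\mu_r\le 1/\sqrt{r}$ from Nagata; combined with the chain this yields $\mu_r=1/\sqrt{r}$ and $\epsilon\le 1/\sqrt{r}$, but by itself it does not force $\epsilon\ge 1/\sqrt{r}$. To close that gap you must use explicitly that Conjecture~\ref{conj:Nagata} is equivalent to $\epsilon(\OO_{\mathbb{P}^2}(1);r)=1/\sqrt{r}$ (the paper records this right before the proposition), and only then does Lemma~\ref{lem:inequality} give $\Delta^{-r}_{\mu_r}=\Delta^{-r}_{\epsilon}\subseteq\widetilde\Delta$.

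For $(3)$ you also diverge from the paper's method, in a productive way. The paper bounds the slice volume from below by $\tfrac12 r\epsilon^2$ via Theorem~\ref{thm:multi-point Seshadri constants}, and from above by $\tfrac12 r\mu_r^2$ via Lemma~\ref{lem:upper bound}; it never identifies the slice explicitly. You instead identify the slice as the triangle $T=\{0\le t_2\le t_1\le 1/\sqrt{r}\}$ by sandwiching: the outer bound from Proposition~\ref{prop:surfaces} (with $s_i=0$, trivial Zariski decomposition along the diagonal, $\alpha_i\equiv 0$ and $\beta_i=t_1$), and the inner bound from $(2)$ since $\Delta^{-r}_{\mu_r}$ lies entirely in $S_{(1,\dots,1)}$ and equals $T$ there. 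You are right to flag that Proposition~\ref{prop:surfaces} is only one-sided (cf.\ Remark~\ref{rmk:surfaces}) and that the reverse inclusion must come from $(2)$; this makes your argument sound. The area computation (Jacobian factor $r$ from the isometric embedding $S_{(1,\dots,1)}\hookrightarrow\mathbb{R}^{2r}$, giving $r\cdot\tfrac{1}{2r}=\tfrac12$) agrees with the paper's $\tfrac12 r\mu_r^2$. Your $(3)$ argument is more explicit, at the cost of relying on Proposition~\ref{prop:surfaces} rather than the more self-contained Lemma~\ref{lem:upper bound}; both yield the same bound.
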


\begin{proof}
Put $H=\pi^{*}\OO_{\mathbb{P}^{2}}(1)$. First, we prove  $(2) \Rightarrow (1)$. Note that $\epsilon(\OO_{\mathbb{P}^{2}}(1);r) \ge \mu_{r}$ by combining $(2)$ with Theorem \ref{thm:multi-point Seshadri constants}. Moreover, since $\mu_{r}$ is the possible maximal number $\xi$ such that $\Delta_{\xi}^{-r}$ can be embedded into $\widetilde{\Delta}_{Y^{1}_{\bullet},\dots,Y^{r}_{\bullet}}(\OO_{\mathbb{P}^{2}}(1))$ for all infinitesimal $Y^{i}_{\bullet}$ over $x_{i}$, we have $\epsilon(\OO_{\mathbb{P}^{2}}(1);r)=\mu_{r}$. For a contradiction, suppose that $\epsilon(\OO_{\mathbb{P}^{2}}(1);r)<\frac{1}{\sqrt{r}}$. Then the above argument gives that $H-\frac{1}{\sqrt{r}} \cdot \sum_{i=1}^{r} E_{i}$ is not pseudo-effective. So there exists an ample divisor $A=eH-\sum_{i=1}^{r}n_{i}E_{i}$ such that $(A.H-\frac{1}{\sqrt{r}} \cdot \sum_{i=1}^{r}E_{i})<0$. Since $A^{2}>0$, we have $e^{2}>\sum_{i=1}^{r}n_{i}^{2}$. Moreover, 
\begin{align*}
e^{2}>(\sum_{i=1}^{r}{|n_{i}|}^{2}) \cdot (\sum_{i=1}^{r} {(\frac{1}{\sqrt{r}})}^{2}) \ge {(\sum_{i=1}^{r}  \frac{|n_{i}|}{\sqrt{r}})}^{2},
\end{align*}
where $|\bullet|$ is the absolute value of $\bullet$. Since $A$ is ample, we have $e>0$ so that $e>\sum_{i=1}^{r} \frac{|n_{i}|}{\sqrt{r}} \ge \frac{1}{\sqrt{r}} \cdot \sum_{i=1}^{r} n_{i}$. Hence
\begin{align*}
0>(A.H-\frac{1}{\sqrt{r}} \cdot \sum_{i=1}^{r}E_{i})=e-\frac{1}{\sqrt{r}} \cdot \sum_{i=1}^{r} n_{i} >0,
\end{align*}
which is a contradiction. 

Furthermore, the above argument gives that $\mu_{r}=\epsilon(\OO_{\mathbb{P}^{2}}(1);r)=\frac{1}{\sqrt{r}}$ holds under the Nagata conjecture. Now, the converse direction $(1) \Rightarrow (2)$ is clear by Theorem \ref{thm:multi-point Seshadri constants}. 

Finally, we are left with checking the implication $(1) \Rightarrow (3)$. Note that the Nagata conjecture implies $\epsilon(\OO_{\mathbb{P}^{2}}(1);r)=\mu_{r}=\frac{1}{\sqrt{r}}$; in fact, since $\epsilon(\OO_{\mathbb{P}^{2}}(1);r)=\frac{1}{\sqrt{r}}$, $H-\frac{1}{\sqrt{r}} \cdot \sum_{i=1}^{r}E_{i}$ is nef but not ample. Since ${(H-\frac{1}{\sqrt{r}} \cdot \sum_{i=1}^{r}E_{i})}^{2}=0$, its nefness implies that it is not big, that is, $\frac{1}{\sqrt{r}}=\epsilon(\OO_{\mathbb{P}^{2}}(1);r)=\mu_{r}$. By Theorem \ref{thm:multi-point Seshadri constants} and the Nagata conjecture, 
\begin{align*}
{\rm vol}_{\mathbb{R}^{2}}({\widetilde{\Delta}_{Y^{1}_{\bullet},\dots,Y^{r}_{\bullet}}(\OO_{\mathbb{P}^{2}}(1))}_{\nu_{1}^{(1)}=\cdots=\nu_{1}^{(r)}, \text{ }\nu_{2}^{(1)}=\cdots=\nu_{2}^{(r)}}) \ge \frac{{\sqrt{r \cdot {\epsilon(\OO_{\mathbb{P}^{2}}(1);r)}^{2}}}^{2}}{2}=\frac{1}{2}.
\end{align*}
Moreover, Lemma \ref{lem:upper bound} gives that 
\begin{align*}
{\rm vol}_{\mathbb{R}^{2}}({\widetilde{\Delta}_{Y^{1}_{\bullet},\dots,Y^{r}_{\bullet}}(\OO_{\mathbb{P}^{2}}(1))}_{\nu_{1}^{(1)}=\cdots=\nu_{1}^{(r)}, \text{ }\nu_{2}^{(1)}=\cdots=\nu_{2}^{(r)}}) \le \frac{{\sqrt{r \cdot {\mu_{r}}^{2}}}^{2}}{2}=\frac{1}{2}.
\end{align*}
Hence, we have $(3)$ as wanted. 
\end{proof}

From $(3)$ in Proposition \ref{prop:Nagata}, we propose Conjecture \ref{conj:Shin} that not only provides a great help in dealing with the multi-weight moving Seshadri constants, but also connects the extended Okounkov bodies with intersection theory. For this conjecutre, we have a positive answer when $m_{1}=\dots=m_{r}=1$. For notational convenience, we write 
\begin{align*}
{\widetilde{\Delta}_{Y^{1}_{\bullet},\dots,Y^{r}_{\bullet}}(D)}_{(m_{1},\dots,m_{r})}={\widetilde{\Delta}_{Y^{1}_{\bullet},\dots,Y^{r}_{\bullet}}(D)}_{\frac{\nu_{1}^{(1)}}{m_{1}}=\cdots=\frac{\nu_{1}^{(r)}}{m_{r}}, \dots, \text{ } \frac{\nu_{n}^{(1)}}{m_{1}}=\cdots=\frac{\nu_{n}^{(r)}}{m_{r}}}.
\end{align*} 

\begin{prop} \label{prop:mono-graded}
For any infinitesimal flags $Y^{1}_{\bullet},\dots,Y^{r}_{\bullet}$ over general points $x_{1},\dots,x_{r} \in X$, 
\begin{enumerate}[(1)]
\item ${\rm vol}_{\mathbb{R}^{n}}({\widetilde{\Delta}_{Y^{1}_{\bullet},\dots,Y^{r}_{\bullet}}(D)}_{(1,\dots,1)})=\frac{(\sqrt{r})^{n-2}}{n!} \cdot {\rm vol}_{X}(D)$, and
\item ${\rm vol}_{\mathbb{R}^{2}}({\widetilde{\Delta}_{Y^{1}_{\bullet},\dots,Y^{r}_{\bullet}}(D)}_{(m_{1},\dots,m_{r})}) \le \frac{1}{2} \cdot {\rm vol}_{X}(D)$ when ${\rm dim}X=2$. 
\end{enumerate}
\end{prop}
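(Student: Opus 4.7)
For Part (2), the plan is to reduce by continuity and Zariski decomposition to the case where $D$ is nef and big, then parametrize the slice $S_{(m_{1},\dots,m_{r})}$ by $(t,s)\in\mathbb{R}^{2}$ via $\nu_{1}^{(i)}=m_{i}t$ and $\nu_{2}^{(i)}=m_{i}s$. Using the surface description from Proposition \ref{prop:surfaces} with Zariski decomposition $\pi^{*}D-t\sum_{i}m_{i}E_{i}=P(t)+N(t)$, the length of the $s$-interval of the diagonal slice at level $t$ is $[\min_{i}\beta_{i}(t)/m_{i}-\max_{i}\alpha_{i}(t)/m_{i}]_{+}$, where $\beta_{i}-\alpha_{i}=P(t)\cdot E_{i}$. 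I would then bound this from above, via the weighted-mean inequality with weights $w_{i}=m_{i}^{2}/\sum_{j}m_{j}^{2}$, by $P(t)\cdot(\sum_{i}m_{i}E_{i})/\sum_{j}m_{j}^{2}$, integrate over $t$, and use the surface identity $\tfrac{d}{dt}P(t)^{2}=-2\,P(t)\cdot\sum_{i}m_{i}E_{i}$ together with $P(0)^{2}={\rm vol}(D)$ and $P(t_{\max})^{2}=0$ to get a standard-Lebesgue area bound of ${\rm vol}(D)/(2\sum_{j}m_{j}^{2})$. Multiplying by the Jacobian factor $\sum_{j}m_{j}^{2}$—to pass from the standard Lebesgue measure on $\mathbb{R}^{2}$ to the ambient-induced measure on $S_{(m_{1},\dots,m_{r})}$, which is the convention underlying the statement (cf.\ the proof of Proposition \ref{prop:Nagata})—then gives the desired bound ${\rm vol}(D)/2$.

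For Part (1), I would use Fubini along $t=\nu_{1}^{(i)}$ on $S_{(1,\dots,1)}\cong\mathbb{R}^{n}$: the standard-Lebesgue slice volume becomes $\int_{0}^{t_{\max}}{\rm vol}_{\rm std}(A_{t})\,dt$ with $A_{t}$ the diagonal cross-section at level $t$. By Lemma \ref{lem:slice} and the disjointness of the $E_{i}$, the full cross-section of $\widetilde{\Delta}(D)$ at $\nu_{1}^{(i)}=t$ for all $i$ should equal the product of the single-point restricted Okounkov bodies $\Delta^{B}_{i}\subset\mathbb{R}^{n-1}$ on the $E_{i}\cong\mathbb{P}^{n-1}$. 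For general points each factor is isometric to a common body $\Delta^{B}$ whose standard Lebesgue volume is $V(t)/(n-1)!$, where $V(t)={\rm vol}_{\tilde{X}|E_{i}}(\pi^{*}D-t\sum_{j}E_{j})$; the diagonal slice of the $r$-fold product is $\Delta^{B}$ itself, so ${\rm vol}_{\rm std}(A_{t})=V(t)/(n-1)!$. Then the Ein-Lazarsfeld-Musta\c t\v a-Nakamaye-Popa identity $\tfrac{d}{dt}{\rm vol}(\pi^{*}D-t\sum_{j}E_{j})=-n\sum_{i}V_{i}(t)$, combined with $V_{i}=V$ by symmetry, gives $\int_{0}^{t_{\max}}V(t)\,dt={\rm vol}(D)/(nr)$; hence ${\rm vol}_{\rm std}(A)={\rm vol}(D)/(r\cdot n!)$, and the Jacobian $(\sqrt{r})^{n}$ converts this to the claimed ambient value $(\sqrt{r})^{n-2}{\rm vol}(D)/n!$.

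The main obstacle is the slicing identification used in Part (1)—establishing that the cross-section at $\nu_{1}^{(i)}=t$ for all $i$ is exactly the product of the restricted single-point Okounkov bodies. This is a multi-point analogue of the classical (restricted) slicing theorem for Okounkov bodies, and it reduces—via the disjointness of the $E_{i}$ and the generality of the points—to a surjectivity statement for the asymptotic restriction $H^{0}(\tilde{X},m(\pi^{*}D-t\sum_{j}E_{j}))\to\bigoplus_{i}H^{0}(E_{i},\,\cdot\,)$, modulo the base locus controlled by $N(t)$. Verifying this carefully, so that only the restricted volume $V(t)$ (rather than the total degree $t$) appears in ${\rm vol}_{\rm std}(A_{t})$, is where the main technical work would lie.
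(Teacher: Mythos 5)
Your route is genuinely different from the paper's. The paper's proof leans entirely on Trusiani's multipoint Okounkov body theory \cite{AT}: it identifies $\mathrm{pr}_{j}\bigl({\widetilde{\Delta}_{Y^{1}_{\bullet},\dots,Y^{r}_{\bullet}}(D)}_{(1,\dots,1)}\bigr)$ with the image $F(\Delta_{j}(D))$ of Trusiani's multipoint body $\Delta_{j}(D)$ under the unimodular map $F:(y_{1},\dots,y_{n})\mapsto(y_{1}+\dots+y_{n},y_{1},\dots,y_{n-1})$, invokes the equality of the volumes $\mathrm{vol}(\Delta_{j}(D))$ at general points, and then reads off part $(1)$ directly from \cite[Theorem~A]{AT}, namely $\sum_{j}\mathrm{vol}(\Delta_{j}(D))=\tfrac{1}{n!}\mathrm{vol}_{X}(D)$; part $(2)$ uses a weighted variant $\Delta_{j}^{(m_{1},\dots,m_{r})}(D)$ of Trusiani's bodies together with the elementary bound $\sum_{j}\mathrm{vol}_{\mathbb{R}^{2}}(\Delta_{j}^{(m_{1},\dots,m_{r})}(D))\le\tfrac{1}{2}\mathrm{vol}_{X}(D)$. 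You instead run a Fubini/slicing computation: for $(2)$ you combine the inclusion of Proposition~\ref{prop:surfaces} with a weighted-mean estimate and the Zariski-chamber derivative formula, and for $(1)$ you slice along $t$ and use the Ein--Lazarsfeld--Musta\c t\v a--Nakamaye--Popa derivative of the volume function. The two approaches buy different things: the paper outsources the genuinely hard multi-point volume identity to \cite{AT}, while your argument, if completed, would be self-contained and would explain the slice volume in terms of restricted volumes on the $E_{i}$. For part $(2)$ your sketch is essentially sound: you only need an upper bound, and Proposition~\ref{prop:surfaces} supplies the required inclusion, while the Jacobian factor $\sum_{j}m_{j}^{2}$ you compute is exactly the ambient-induced scaling used elsewhere in the paper.

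For part $(1)$, however, there is a real gap, not merely a ``technical work'' placeholder. You claim that the cross-section of $\widetilde{\Delta}_{Y^{1}_{\bullet},\dots,Y^{r}_{\bullet}}(D)$ at $\nu_{1}^{(i)}=t$ for all $i$ equals the product $\Delta^{B}_{1}\times\dots\times\Delta^{B}_{r}$ of single-point restricted Okounkov bodies, and that the diagonal slice of this product is a common body $\Delta^{B}$. Two separate problems arise. First, even granting the product structure, the diagonal slice of $\Delta^{B}_{1}\times\dots\times\Delta^{B}_{r}$ is the intersection $\bigcap_{i}\Delta^{B}_{i}\subseteq\mathbb{R}^{n-1}$, and this equals $\Delta^{B}$ only if the $\Delta^{B}_{i}$ are \emph{literally equal}, not just isometric or equi-voluminous. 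For arbitrary infinitesimal flags over general points there is no reason for the restricted bodies on distinct $E_{i}$ to coincide as sets, and if they differ the Fubini integral undershoots, contradicting the claimed \emph{equality} in $(1)$. Second, the product claim itself requires a surjectivity/decoupling statement for the joint asymptotic restriction to $\bigsqcup_{i}E_{i}$; this is precisely where the paper's detour through \cite{AT} earns its keep, because Trusiani's partition of sections according to which point carries the smallest leading exponent replaces the needed decoupling with a clean volume additivity theorem. Without an argument replacing \cite[Theorem~A]{AT}, your sketch for $(1)$ does not close.

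Two smaller points: in $(2)$ you should state the slice length as an upper bound (``is at most'') rather than an equality, since Remark~\ref{rmk:surfaces} makes clear that Proposition~\ref{prop:surfaces} is only known as an inclusion; and your reduction to the nef case should note explicitly that for general $x_{i}$ the negative part $N$ of $D$ misses the $x_{i}$, so $\widetilde{\Delta}(D)=\widetilde{\Delta}(P)$.
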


\begin{proof}
(1) Let ${\rm pr}_{j}$ be the $j$-th projection map defined in Note \ref{note:projection}, and let $\Delta_{j}(D)$ be the multipoint Okounkov body of $D$ at $x_{j}$ (\cite{AT}). Since $x_{1},\dots,x_{r}$ are general, it is easy to see that $F(\Delta_{j}(D))={\rm pr}_{j}({\widetilde{\Delta}_{Y^{1}_{\bullet}, \dots,Y^{r}_{\bullet}}(D)}_{(1,\dots,1)})$, where $F:\mathbb{R}^{n} \rightarrow \mathbb{R}^{n}, (y_{1},\dots,y_{r}) \mapsto (y_{1}+\dots+y_{n}, y_{1},\dots,y_{n-1})$, and all the volumes of $F(\Delta_{j}(D))$ coincide for all $j$. Moreover, note that
\begin{align*}
{\rm vol}_{\mathbb{R}^{n}}({\widetilde{\Delta}_{Y^{1}_{\bullet}, \dots,Y^{r}_{\bullet}}(D)}_{(1,\dots,1)})&=(\sqrt{r})^{n} \cdot {\rm vol}_{\mathbb{R}^{n}}({\rm pr}_{j}({\widetilde{\Delta}_{Y^{1}_{\bullet}, \dots,Y^{r}_{\bullet}}(D)}_{(1,\dots,1)}) \\
&=(\sqrt{r})^{n} \cdot {\rm vol}_{\mathbb{R}^{n}}(F(\Delta_{j}(D))).
\end{align*}
Since $F$ is bijective and its Jacobian is $1$, \cite[Theorem 3.3.2, p.96]{EG} implies that ${\rm vol}_{\mathbb{R}^{n}}(\Delta_{j}(D))={\rm vol}_{\mathbb{R}^{n}}(F(\Delta_{j}(D)))$. Now the result is an immediate consequence of \cite[Theorem A]{AT}. 

(2) Let $W_{k,j}^{(m_{1},\dots,m_{r})}:=\{s \in H^{0}(X,kD)-\{0\} \text{ } | \text{ } \frac{1}{m_{j}}\nu^{p_{j}}(s)<\frac{1}{m_{i}}\nu^{p_{i}}(s) \text{ for all } i \neq j\}$, where $\nu^{p_{j}}(s)$ is the leading term exponent at $p_{j}$ with respect to a total additive order on $\mathbb{Z}^{n}$ (cf. \cite{AT}). We define a convex body $\Delta_{j}^{(m_{1},\dots,m_{r})}(D):=\overline{\bigcup_{k \ge 1} \{\frac{\nu^{p_{j}}(s)}{k}, s \in W_{k,j}^{(m_{1},\dots,m_{r})}\}} \subseteq \mathbb{R}^{n}$. Note that $F(\Delta_{j}^{(m_{1},\dots,m_{r})}(D))={\rm pr}_{j}({\widetilde{\Delta}_{Y^{1}_{\bullet}, \dots,Y^{r}_{\bullet}}(D)}_{(m_{1},\dots,m_{r})})$, and that 
\begin{align*}
{\rm vol}_{\mathbb{R}^{2}}({\widetilde{\Delta}_{Y^{1}_{\bullet}, \dots,Y^{r}_{\bullet}}(D)}_{(m_{1},\dots,m_{r})})&=(\frac{\sqrt{m_{1}^{2}+\dots+m_{r}^{2}}}{m_{j}})^{2} \cdot {\rm vol}_{\mathbb{R}^{2}}({\rm pr}_{j}({\widetilde{\Delta}_{Y^{1}_{\bullet}, \dots,Y^{r}_{\bullet}}(D)}_{(m_{1},\dots,m_{r})}) \\
&=\frac{m_{1}^{2}+\dots+m_{r}^{2}}{m_{j}^{2}} \cdot {\rm vol}_{\mathbb{R}^{2}}(\Delta_{j}^{(m_{1},\dots,m_{r})}(D)).
\end{align*}
Since it is easy to see that $\sum_{j=1}^{r} {\rm vol}_{\mathbb{R}^{2}}(\Delta_{j}^{(m_{1},\dots,m_{r})}(D)) \le \frac{1}{2} \cdot {\rm vol}_{X}(D)$, we are done. 
\end{proof}

\begin{subsection} {${\overline{\rm Eff}({\rm Bl}_{r}(S))}_{\mathbb{R}}$ and ${\overline{\rm Nef}({\rm Bl}_{r}(S))}_{\mathbb{R}}$ for a surface $S$.}

We observe a general relation between ${\overline{\rm Eff}({\rm Bl}_{r}(S))}_{\mathbb{R}}$ and ${\overline{\rm Nef}({\rm Bl}_{r}(S))}_{\mathbb{R}}$ for a surface $S$. 

\begin{defn}
Let $\pi:{\rm Bl}_{\{x_{1},\dots,x_{r}\}}(X) \rightarrow X$. The Nakayama constant of a big $\mathbb{R}$-divisor $D$ on $X$ at $x_{1},\dots,x_{r}$ is the real number
\begin{align*}
\mu(D;x_{1},\dots,x_{r})=\sup \{a \ge 0 \text{ } | \text{ } \pi^{*}D-a \cdot \sum_{i=1}^{r}E_{i} \text{ is big}\}.
\end{align*} 
\end{defn}

\begin{prop} \label{prop:general relation}
Let $S$ be a smooth projective surface, $L$ an ample $\mathbb{R}$-divisor on $S$, and $x_{1},\dots,x_{r} \in S$. Then 
\begin{align*}
\mu(L;x_{1},\dots,x_{r})-\sqrt{{\mu(L;x_{1},\dots,x_{r})}^{2}-\frac{1}{r} \cdot L^{2}} \le \epsilon(L;x_{1},\dots,x_{r}) \le \frac{L^{2}}{r \cdot \mu(L;x_{1},\dots,x_{r})}. 
\end{align*}
In particular, $\epsilon(L;x_{1},\dots,x_{r})=\sqrt{\frac{L^{2}}{r}}$ if and only if $\mu(L;x_{1},\dots,x_{r})=\sqrt{\frac{L^{2}}{r}}$. 
\end{prop}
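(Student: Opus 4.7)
Let $\pi:\tilde X:=\mathrm{Bl}_{\{x_1,\dots,x_r\}}(S)\to S$ denote the blow-up with exceptional divisors $E_i$, and write $D_a:=\pi^*L-a\sum_i E_i$. By definition, $\epsilon:=\epsilon(L;x_1,\dots,x_r)=\sup\{a\ge 0:D_a\text{ is nef}\}$ and $\mu:=\mu(L;x_1,\dots,x_r)=\sup\{a\ge 0:D_a\text{ is big}\}$, with intersection numbers $D_a\cdot D_b=L^2-rab$. For the upper bound, $D_\epsilon$ is nef and $D_\mu$ lies on the boundary of the big cone, hence is pseudoeffective. Since the intersection of a nef class with a pseudoeffective class on a surface is non-negative,
\[
0\le D_\epsilon\cdot D_\mu = L^2 - r\epsilon\mu,
\]
yielding $\epsilon\le L^2/(r\mu)$.

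For the lower bound, a short manipulation shows it is equivalent to the inequality $L^2\le r\epsilon(2\mu-\epsilon)$. The plan is to analyze the Zariski decomposition $D_\mu=P_\mu+N_\mu$. First I would invoke Boucksom's $C^1$-smoothness of the volume function: since ${\rm vol}$ is identically zero past $\mu$, the directional derivative at $D_\mu$ along $-\sum_i E_i$ must vanish, and this derivative equals $-2P_\mu\cdot\sum_i E_i$. Combined with $P_\mu\cdot E_i\ge 0$ (nef against an irreducible curve), this forces $P_\mu\cdot E_i=0$ for every $i$. The standard Zariski relation $P_\mu\cdot D_\mu=P_\mu^2+P_\mu\cdot N_\mu=0$ then gives $P_\mu\cdot\pi^*L=0$, and Hodge Index applied to the big nef class $\pi^*L$ (with $(\pi^*L)^2=L^2>0$) forces $P_\mu=0$. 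Hence $D_\mu=N_\mu$ is effective and decomposes as $\sum_k\alpha_k C_k$ with $\alpha_k>0$ and irreducible $C_k$ whose intersection matrix is negative definite.

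Each $C_k$ is either an exceptional $E_{i_k}$ or the strict transform $\tilde{\bar C}_k$ of an irreducible curve $\bar C_k\subset S$ with multiplicities $m_i^{(k)}=\mathrm{mult}_{x_i}(\bar C_k)$. Matching classes in $\pi^*L=\mu\sum_i E_i+\sum_k\alpha_k C_k$, the $\pi^*$-part yields $L=\sum_{k\in J}\alpha_k\bar C_k$ (summing over strict-transform indices $J$), while the coefficient of each $E_i$ yields $\sum_{k\in J}\alpha_k m_i^{(k)}\ge\mu$. Combining the Seshadri inequalities $L\cdot\bar C_k\ge\epsilon m^{(k)}$ (with $m^{(k)}=\sum_i m_i^{(k)}$), these linear constraints, and a Hodge-Index estimate on each component that is made effective by the negative-definiteness of $(C_k\cdot C_{k'})$, one squeezes out the quadratic inequality $L^2\le r\epsilon(2\mu-\epsilon)$.

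The ``in particular'' iff is immediate from both bounds: $\epsilon=\sqrt{L^2/r}$ forces $\mu\le\sqrt{L^2/r}$ by the upper bound, while the trivial observation that $D_a^2>0$ and $D_a\cdot\pi^*L>0$ force $D_a$ big for $a<\sqrt{L^2/r}$ gives $\mu\ge\sqrt{L^2/r}$; conversely if $\mu=\sqrt{L^2/r}$ the lower bound collapses to $\epsilon\ge\sqrt{L^2/r}$, matched by the upper. The main obstacle is the last step of the lower-bound argument, namely extracting the quadratic inequality $L^2\le r\epsilon(2\mu-\epsilon)$ from the Zariski data: a naïve application of Seshadri to the decomposition $D_\mu=\sum_k\alpha_k C_k$ only reproduces the upper bound $L^2\ge r\epsilon\mu$, so genuinely new numerical input coming from the negative-definiteness of the intersection matrix is required.
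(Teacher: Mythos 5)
Your upper bound is correct and is in fact cleaner than the paper's: $D_\epsilon = \pi^*L - \epsilon\sum E_i$ is nef, $D_\mu$ is pseudoeffective (limit of big classes), and $0 \le D_\epsilon \cdot D_\mu = L^2 - r\epsilon\mu$ does the job with no convex geometry. The paper instead derives both inequalities simultaneously by sandwiching the slice $\widetilde{\Delta}_{Y^1_\bullet,\dots,Y^r_\bullet}(L)_{(1,\dots,1)}$ of the extended Newton--Okounkov body between a triangle with vertices ${\bf 0}$, ${\bf v}_1^{(\sqrt r\epsilon,\dots)}+{\bf v}_2^{(\sqrt r\epsilon,\dots)}$, ${\bf v}_1^{(\sqrt r\mu,\dots)}$ (area $\tfrac{r}{2}\epsilon\mu$) and a trapezoid with the additional vertex ${\bf v}_1^{(\sqrt r\mu,\dots)}+{\bf v}_2^{(\sqrt r\epsilon,\dots)}$ (area $\tfrac{r}{2}\epsilon(2\mu-\epsilon)$), then reads off both bounds from the exact volume formula ${\rm vol}_{\mathbb{R}^2}\bigl(\widetilde{\Delta}(L)_{(1,\dots,1)}\bigr)=\tfrac12 L^2$ established in Proposition~\ref{prop:mono-graded}. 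The outer inclusion comes from the Zariski-decomposition description of extended Okounkov bodies on surfaces (Proposition~\ref{prop:surfaces}), following the single-point argument in K\"uronya--Lozovanu.

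Your lower-bound argument, however, breaks down at the very first step. The $C^1$-smoothness of the volume function (Boucksom--Favre--Jonsson; for surfaces Bauer--K\"uronya--Szemberg) holds on the \emph{open} big cone, not across its boundary, and there is no reason for the one-sided directional derivative of $a \mapsto {\rm vol}(D_a)$ at $a=\mu$ to vanish. Concretely, take $S=\mathbb{P}^2$, $L=H$, $r=1$: then $\mu=1$, ${\rm vol}(D_a)=1-a^2$ for $a\in[0,1]$, so the one-sided derivative at $a=\mu$ equals $-2\neq 0$; moreover $D_\mu=\pi^*H-E$ is nef, so $P_\mu=D_\mu$ and $P_\mu\cdot E=1\neq 0$. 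Thus the chain $P_\mu\cdot E_i=0\Rightarrow P_\mu\cdot\pi^*L=0\Rightarrow P_\mu=0$ collapses already at its first link, and the rest of the argument (parametrizing $N_\mu$ by strict transforms and exceptional divisors, using negative-definiteness) never gets off the ground. You are right to flag the final ``squeeze'' as the hard part, but the real gap is earlier: $P_\mu$ is in general a genuine nef class with $P_\mu^2=0$, not zero, and no argument restricted to the decomposition of $D_\mu$ alone will see the quadratic term $-\tfrac12\epsilon^2$; the paper gets it precisely because the trapezoid, unlike a triangle, has a truncated corner of area $\tfrac{r}{2}\epsilon^2$ cut off by the upper constraint $\nu_2\le\nu_1$ built into the infinitesimal Okounkov body.
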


\begin{proof}
Let $\epsilon=\epsilon(L;x_{1},\dots,x_{r})$ and $\mu=\mu(L;x_{1},\dots,x_{r})$. Note that a similar argument to the proof of \cite[Proposition 4.2]{KL1411} says that ((R.H.S) in Proposition \ref{prop:surfaces}) $\cap S_{(1,\dots,1)}$ is contained in the convex hull of the set $\{{\bf 0}, {\rm v}_{1}^{(\sqrt{r}\epsilon, \dots, \sqrt{r} \epsilon)}+{\rm v}_{2}^{(\sqrt{r}\epsilon, \dots, \sqrt{r} \epsilon)}, {\rm v}_{1}^{(\sqrt{r}\mu, \dots, \sqrt{r} \mu)}, {\rm v}_{1}^{(\sqrt{r}\mu, \dots, \sqrt{r} \mu)}+{\rm v}_{2}^{(\sqrt{r}\epsilon, \dots, \sqrt{r} \epsilon)}\}$ in $S_{(1, \dots, 1)}$, denoted by $\Delta$. Thus ${\widetilde{\Delta}_{Y^{1}_{\bullet},\dots,Y^{r}_{\bullet}}(L)}_{(1,\dots,1)} \subseteq \Delta$ in $S_{(1,\dots,1)}$. By Proposition \ref{prop:mono-graded}, 
\begin{align*}
\frac{L^{2}}{2}={\rm vol}_{\mathbb{R}^{2}}({\widetilde{\Delta}_{Y^{1}_{\bullet},\dots,Y^{r}_{\bullet}}(L)}_{(1,\dots,1)}) \le {\rm vol}_{\mathbb{R}^{2}}(\Delta)=\frac{1}{2}r\epsilon^{2}+r\epsilon(\mu-\epsilon).
\end{align*}
So $\mu-\sqrt{\mu^{2}-\frac{1}{r} \cdot L^{2}} \le \epsilon$. Furthermore, for very general infinitesimal flags $Y^{1}_{\bullet},\dots,Y^{r}_{\bullet}$ over $x_{1},\dots,x_{r}$, it is easy to see that the convex hull of the set $\{{\bf 0}, {\rm v}_{1}^{(\sqrt{r}\epsilon, \dots, \sqrt{r} \epsilon)}+{\rm v}_{2}^{(\sqrt{r}\epsilon, \dots, \sqrt{r} \epsilon)}, {\rm v}_{1}^{(\sqrt{r}\mu, \dots, \sqrt{r} \mu)}\}$ in $S_{(1, \dots, 1)}$. So we have $\frac{L^{2}}{2} \ge \frac{r}{2} \cdot \epsilon \mu$, i.e. $\epsilon \le \frac{L^{2}}{r\mu}$. 
\end{proof}

\end{subsection}

\begin{rmk}
If Conjecture \ref{conj:Shin} holds for arbitrary $(m_{1},\dots,m_{r}) \in \mathbb{N}^{r}$, then we can induce a general relation between ${\overline{\rm Eff}({\rm Bl}_{r}(S))}_{\mathbb{R}}$ and ${\overline{\rm Nef}({\rm Bl}_{r}(S))}_{\mathbb{R}}$ by using $\epsilon_{\bf m}(L;x_{1},\dots,x_{r})$ and the multi-weight version of Nakayama constants. 
\end{rmk}

\end{subsection}

\begin{subsection} {Irrationality of Seshadri constant, ${\overline{\rm Eff}({\rm Bl}_{s}(\mathbb{P}^{2}))}_{\mathbb{R}}$ and ${\overline{\rm Nef}({\rm Bl}_{s}(\mathbb{P}^{2}))}_{\mathbb{R}}$}
From Proposition \ref{prop:general relation}, we can draw a stronger conclusion than \cite{DKMS, HH} under a weaker assumption. 

Let $r=1$ and $S={\rm Bl}_{s}(\mathbb{P}^{2})$. Then Proposition \ref{prop:general relation} reads as follows: 
\begin{align*}
\mu(L;x) -\sqrt{\mu(L;x)^{2}-L^{2}} \le \epsilon(L;x) \le \frac{L^{2}}{\mu(L;x)}.
\end{align*}

\begin{defn} \label{defn:standard form}
A line bundle $F=dH-\sum_{i=1}^{s}m_{i}E_{i}$ on ${\rm Bl}_{s}(\mathbb{P}^{2})$ is in standard form if 
\begin{enumerate}[(i)]
\item $m_{1} \ge \dots \ge m_{s} \ge 0$, and 
\item $d \ge m_{1}+m_{2}+m_{3}$. 
\end{enumerate}
\end{defn}

From SHGH Conjecture (\cite{DKMS}), we present the following conjecture:

\begin{conj} \label{conj:standard}
Let $D$ be a line bundle in standard form on ${\rm Bl}_{s}(\mathbb{P}^{2})$ for $s \ge 1$. If $D^{2}<0$, then $D$ is not effective. 
\end{conj}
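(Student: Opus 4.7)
The plan is to attempt a proof by contradiction combining Cremona reduction with Zariski decomposition. Suppose $D=dH-\sum_{i=1}^{s}m_{i}E_{i}$ is in standard form with $D^{2}<0$ and $D$ effective. I would first dispose of the low-$s$ cases: for $s\le 2$, the standard form forces $d\ge m_{1}+m_{2}$ (with $m_{i}=0$ for $i>s$), which already gives $D^{2}=d^{2}-\sum m_{i}^{2}\ge 0$, so the hypothesis is vacuous. This reduces the problem to $s\ge 3$.

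For $s\ge 3$ the natural reduction tool is the quadratic Cremona transformation $T$ centered at $x_{1},x_{2},x_{3}$, which acts on $\mathrm{Pic}(\mathrm{Bl}_{s}(\mathbb{P}^{2}))$ by $D\mapsto D'=(2d-m_{1}-m_{2}-m_{3})H-(d-m_{2}-m_{3})E_{1}-(d-m_{1}-m_{3})E_{2}-(d-m_{1}-m_{2})E_{3}-\sum_{i\ge 4}m_{i}E_{i}$, preserving $D^{2}$ and effectivity. The standard-form inequality $d\ge m_{1}+m_{2}+m_{3}$ is precisely what causes $T$ to not decrease $d$, so naive induction on $d$ by iterated Cremona is unavailable; this is the structural reason why the standard-form case is harder than the non-standard one. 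I would supplement Cremona with the Zariski decomposition $D=P+N$: since $D^{2}<0$ and $D$ is effective, $N$ is nonzero and contains an irreducible curve $C$ with $C^{2}<0$ and $D\cdot C<0$. The program is then to test $C$ against the strict transforms $\tilde\ell_{ij}=H-E_{i}-E_{j}$ of lines through pairs $x_{i},x_{j}$ and against strict transforms of higher-degree rational curves through subsets of the $x_{i}$'s, and to use the inequalities $d\ge m_{1}+m_{2}+m_{3}\ge m_{i}+m_{j}+m_{k}$ and the ordering $m_{1}\ge\cdots\ge m_{s}\ge 0$ to constrain the class of $C$. Each candidate class for $C$ should either contradict $C^{2}<0$ or contradict $D\cdot C<0$ once the standard-form inequalities are fed in.

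The main obstacle, and the reason the statement remains conjectural, will be ruling out all possible negative-self-intersection curves that could appear in the support of a standard-form effective divisor. For $s\ge 10$ general points this is essentially equivalent to controlling the cone of $(-1)$-curves on $\mathrm{Bl}_{s}(\mathbb{P}^{2})$, which is the content of the $(-1)$-curve conjecture cited in Lemma \ref{lem:(-1) and standard}. A realistic short-term target is therefore a conditional proof: assuming either the $(-1)$-curve conjecture or a tailored classification of negative curves, the Cremona/Zariski dichotomy above should close the argument, and indeed Lemma \ref{lem:(-1) and standard} gives exactly such an implication in one direction. An unconditional proof appears to require at least a partial classification of effective classes of negative square on $\mathrm{Bl}_{s}(\mathbb{P}^{2})$ which bypasses both SHGH and the $(-1)$-curve conjecture, and I expect this step to be the genuine bottleneck of any attack.
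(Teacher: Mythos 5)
The statement you were asked to prove is \emph{Conjecture}~\ref{conj:standard}; the paper does not prove it and presents it as an open problem. The only result in the paper that bears on it is Lemma~\ref{lem:(-1) and standard}, which shows the $(-1)$-curve conjecture (Conjecture~\ref{conj:(-1)}) implies Conjecture~\ref{conj:standard} via exactly the Zariski-decomposition argument you sketch: if $D$ is effective in standard form with $D^{2}<0$, write $D=P+N$, note that some prime component $C$ of $N$ satisfies $C^{2}<0$ and $D\cdot C<0$, and observe that a $(-1)$-curve always meets a standard-form class non-negatively. Your assessment that no unconditional proof is within reach, and that the conditional route through negative-curve classification is the realistic one, is therefore fully consistent with what the paper actually does, and your identification of the structural obstruction (Cremona does not lower $d$ precisely when $d\ge m_{1}+m_{2}+m_{3}$) is correct and is the standard explanation for why this is the hard side of the Cremona dichotomy.

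One small inaccuracy: you say the hypothesis $D^{2}<0$ is vacuous for $s\le 2$ and hence reduce to $s\ge 3$, but the vacuous range is much larger. If $m_{3}>0$ (otherwise $s$ effectively drops to $2$), then $d^{2}\ge(m_{1}+m_{2}+m_{3})^{2}\ge m_{1}^{2}+m_{2}^{2}+m_{3}^{2}+6m_{3}^{2}$ and $\sum_{i\ge 4}m_{i}^{2}\le(s-3)m_{3}^{2}$, so
\[
D^{2}\;=\;d^{2}-\sum_{i=1}^{s}m_{i}^{2}\;\ge\;(6-(s-3))\,m_{3}^{2}\;=\;(9-s)\,m_{3}^{2},
\]
which is $\ge 0$ for all $s\le 9$. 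The conjecture therefore has content only from $s\ge 10$ onward (the homogeneous class $3H-E_{1}-\cdots-E_{10}$ being the first nontrivial example), which matches the paper's observation after Theorem~\ref{thm:irrationality} that the relevant range is $s\ge 9$ general points. This strengthens, rather than undermines, your reduction, but it is worth getting the threshold right since it indicates that the conjecture is genuinely a statement about many points and is not amenable to low-$s$ case analysis.
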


Recall the $(-1)$-curve conjecture:
\begin{conj}((-1)-curve conjecture) \label{conj:(-1)}
A prime divisor $C$ on ${\rm Bl}_{s}(\mathbb{P}^{2})$ with $C^{2}<0$ is a $(-1)$-curve. 
\end{conj}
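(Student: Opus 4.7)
The plan is to reduce Conjecture~\ref{conj:(-1)} to Conjecture~\ref{conj:standard} via iterated Cremona transformations. Let $C = dH - \sum_{i=1}^{s} m_i E_i$ be a prime divisor with $C^2 < 0$. After permuting the $E_i$, I may assume $m_1 \ge m_2 \ge \cdots \ge m_s$. If $C$ happens to be in standard form (Definition~\ref{defn:standard form}), then $C^2 < 0$ combined with primality (hence effectivity) immediately contradicts Conjecture~\ref{conj:standard}, which is the clean case; in particular this case cannot actually occur for prime $C$, so one learns that every prime negative curve has $d < m_1 + m_2 + m_3$.

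The main step is then to apply, in this non-standard case, the quadratic Cremona transformation $T$ centered at $x_1, x_2, x_3$. On the Picard lattice $T$ acts as an isometry sending $(d; m_1, m_2, m_3, m_4, \ldots, m_s)$ to $(2d - m_1 - m_2 - m_3;\, d - m_2 - m_3,\, d - m_1 - m_3,\, d - m_1 - m_2,\, m_4, \ldots, m_s)$. Crucially, $T$ preserves the intersection form and sends prime divisors to prime divisors, except for the six $(-1)$-curves $E_i$ and $H - E_i - E_j$ (with $i,j \in \{1,2,3\}$) that are blown down or blown up by $T$. Since $d$ strictly decreases under this step, reordering and iterating terminates after finitely many iterations in one of two ways: either we reach a standard-form class, which loops us back into the clean case above and contradicts primality; or the process halts because $C$ itself coincides with one of the Cremona-contracted $(-1)$-curves, yielding the desired conclusion.

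The main obstacle is, frankly, that Conjecture~\ref{conj:standard} is itself an open problem lying squarely in the SHGH-conjecture circle of ideas, so any such proof is conditional; this is in fact precisely the advantage the author wishes to advertise, since the reduction above shows Conjecture~\ref{conj:standard} is \emph{provably weaker} than Conjecture~\ref{conj:(-1)} (this is presumably the content of Lemma~\ref{lem:(-1) and standard}). A secondary but still genuine technical point is verifying that the Cremona reduction preserves primality at each intermediate step: the strict transform of a prime divisor under $T$ can fail to be prime if $C$ passes through the three centers with very special multiplicities, so one must either assume $x_1, \ldots, x_s$ are in sufficiently general position, or else carry out a careful case analysis at the terminal step to identify the surviving $(-1)$-curves.
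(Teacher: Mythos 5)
The statement you are addressing is a \emph{conjecture}, not a theorem; the $(-1)$-curve conjecture is a well-known open problem and the paper does not prove it. It appears in the text only as a hypothesis so that Lemma~\ref{lem:(-1) and standard} can compare its logical strength with that of Conjecture~\ref{conj:standard}. There is therefore no proof in the paper to match your argument against, and you should not expect to find one.

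What your Cremona-reduction sketch actually argues is that Conjecture~\ref{conj:standard} implies Conjecture~\ref{conj:(-1)} for points in sufficiently general position. That is the \emph{converse} of what Lemma~\ref{lem:(-1) and standard} proves. The lemma starts from an effective $D$ in standard form with $D^{2}<0$, passes to the prime components $C_i$ of the negative part of its Zariski decomposition, notes that each $C_i^{2}<0$ forces $C_i$ to be a $(-1)$-curve under Conjecture~\ref{conj:(-1)}, and then obtains a contradiction because standard-form classes meet every $(-1)$-curve nonnegatively. In other words, the paper shows that Conjecture~\ref{conj:(-1)} implies Conjecture~\ref{conj:standard}, so \ref{conj:standard} is the \emph{weaker} hypothesis, and that is exactly why Theorem~\ref{thm:irrationality} improves on \cite{DKMS, HH}. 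Your closing remark that ``the reduction above shows Conjecture~\ref{conj:standard} is provably weaker'' has the logic reversed: a reduction of \ref{conj:(-1)} to \ref{conj:standard} would show \ref{conj:standard} is at least as \emph{strong}. If your Cremona argument were made rigorous---verifying that the new degree $2d-m_1-m_2-m_3$ remains nonnegative and strictly decreases when $C$ is not in standard form, that strict transforms of prime curves stay prime away from the six contracted $(-1)$-curves, and that sufficient generality of the centers is preserved through the iteration---it would combine with the paper's lemma to yield an \emph{equivalence} of the two conjectures for general points. That would be a genuinely different and stronger observation than anything asserted in the paper, but it would still not constitute a proof of Conjecture~\ref{conj:(-1)} itself, which remains conditional.
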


\begin{lem} \label{lem:(-1) and standard}
The $(-1)$-curve conjecture implies Conjecture \ref{conj:standard}. 
\end{lem}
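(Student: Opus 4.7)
The approach is by contradiction. Assume Conjecture \ref{conj:(-1)}, and suppose $D = dH - \sum_{i=1}^{s} m_{i}E_{i}$ is in standard form with $D^{2} < 0$ yet $D$ is effective. Writing an effective representative as $\sum_{j}a_{j}C_{j}$ with $C_{j}$ prime and $a_{j}>0$, the identity $D^{2}=\sum_{j}a_{j}(D \cdot C_{j}) < 0$ produces a prime component $C$ with $D \cdot C < 0$. Setting $a = a_{j_{0}}$ with $C = C_{j_{0}}$ and $R = D - aC$, the effectivity of $R$ together with $C \not\subseteq {\rm Supp}(R)$ gives $R \cdot C \geq 0$, whence $aC^{2} = D \cdot C - R \cdot C < 0$, so $C^{2} < 0$. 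By Conjecture \ref{conj:(-1)}, $C$ must be a $(-1)$-curve.

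The plan is then to show $D \cdot C \geq 0$ for \emph{every} $(-1)$-curve $C$ on ${\rm Bl}_{s}(\mathbb{P}^{2})$, which yields the desired contradiction. Write $C = \alpha H + \sum_{i}\gamma_{i}E_{i}$; the identities $C^{2}=-1$ and $K_{{\rm Bl}_{s}(\mathbb{P}^{2})} \cdot C = -1$ read $\alpha^{2}-\sum \gamma_{i}^{2}=-1$ and $3\alpha + \sum \gamma_{i}=1$. If $\alpha = 0$ these force $C = E_{i_{0}}$ for some $i_{0}$, and then $D \cdot C = m_{i_{0}} \geq 0$ by standard form. If $\alpha \geq 1$, irreducibility of $C$ with $C \neq E_{i}$ forces $C \cdot E_{i} = -\gamma_{i} \geq 0$, so $\beta_{i}:=-\gamma_{i}\geq 0$ with $\sum \beta_{i}=3\alpha - 1$, $\sum \beta_{i}^{2}=\alpha^{2}+1$, and moreover $\beta_{i}\leq \alpha$ for every $i$ (Bezout against a general line through the $i$-th blown-up point).

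It remains to establish $\sum m_{i}\beta_{i}\leq \alpha d$. By the rearrangement inequality, since the $m_{i}$ are already sorted decreasingly, replacing $(\beta_{i})$ by its decreasing rearrangement only increases the sum, so I may assume $\beta_{1}\geq \cdots \geq \beta_{s}$ for the purposes of the estimate. Using $m_{i}\leq m_{3}$ for $i \geq 4$,
\[
\sum_{i}m_{i}\beta_{i}\leq m_{1}\beta_{1}+m_{2}\beta_{2}+m_{3}\beta_{3}+m_{3}(3\alpha-1-\beta_{1}-\beta_{2}-\beta_{3})=(m_{1}-m_{3})\beta_{1}+(m_{2}-m_{3})\beta_{2}+m_{3}(3\alpha-1),
\]
and then $\beta_{1},\beta_{2}\leq \alpha$ combined with $d\geq m_{1}+m_{2}+m_{3}$ gives $\sum m_{i}\beta_{i}\leq \alpha(m_{1}+m_{2}+m_{3})-m_{3}\leq \alpha d$. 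Thus $D \cdot C = \alpha d - \sum m_{i}\beta_{i}\geq 0$, the promised contradiction. The main (mild) obstacle is aligning the two packages of linear constraints --- the $(-1)$-class equations on $(\beta_{i})$ against the standard-form inequality on $(m_{i})$ --- which the rearrangement trick and the split through $m_{3}$ resolve cleanly; no input beyond Conjecture \ref{conj:(-1)} is required.
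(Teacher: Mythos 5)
Your proof is correct, and while it reaches the same conclusion as the paper, it takes a more elementary and more self-contained route. The paper extracts a negative curve via Zariski decomposition: write $D = P + N$, observe $D\cdot N < 0$ to find a component with $D\cdot C_i < 0$, and invoke negative-definiteness of the intersection matrix of $N$ to conclude $C_i^2 < 0$. You instead work directly with an effective representative $\sum_j a_j C_j$ and the elementary observation $aC^2 = D\cdot C - R\cdot C < 0$, avoiding Zariski decomposition entirely. More substantially, the paper closes by \emph{asserting} without proof that any standard-form line bundle has non-negative intersection with every $(-1)$-curve; your proposal supplies a complete proof of this fact, parametrizing $(-1)$-classes $C = \alpha H - \sum \beta_i E_i$ via $C^2 = K\cdot C = -1$, extracting the constraints $0 \leq \beta_i \leq \alpha$ and $\sum \beta_i = 3\alpha - 1$, and combining the rearrangement inequality with the standard-form bound $d \geq m_1+m_2+m_3$ to get $\sum m_i\beta_i \leq \alpha d$. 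So where the paper relies on an uncited standard fact and on Zariski decomposition, you give an argument that is fully explicit and uses only Bezout-type intersection estimates; the trade-off is a somewhat longer write-up, but it fills the one nontrivial gap the paper leaves implicit.
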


\begin{proof}
Assume Conjecture \ref{conj:(-1)}. Let $D$ be a line bundle in standard form on ${\rm Bl}_{s}(\mathbb{P}^{2})$ with $D^{2}<0$. Suppose that $D$ is effective. Then we can consider its Zariski decomposition $D=P+N$, where $P$ and $N$ are effective $\mathbb{Q}$-divisors. By choosing a sufficiently divisible $m>0$, we may assume that $mP$ and $mN$ are integral. So we may let $mN=C_{1}+\cdots+C_{t}$, where $C_{i}$ is a prime divisor on ${\rm Bl}_{s}(\mathbb{P}^{2})$ for $1 \le i \le t$. (Note that some of them may be the same.) 

Since $D$ is effective and $P$ is nef, $(D.mD-C_{1}-\cdots-C_{t})=(D.mP) \ge 0$ so that $0>m(D^{2}) \ge (D.C_{1}+\cdots+C_{t})$. Hence $(D.C_{i})<0$ for some $i$. Moreover, the negative-definiteness of the intersection matrix of $N$ implies that $C_{i}^{2}<0$ for all $i$. Since $C_{i}$'s are prime with $C_{i}^{2}<0$, they are all $(-1)$-curves by Conjecture \ref{conj:(-1)}. However, since any line bundles in standard form have non-negative intersection with any $(-1)$-curves, $(D.C_{i}) \ge 0$ for all $i$. This contradicts the above argument that $(D.C_{i})<0$ for some $i$. Hence $D$ is not effective. 
\end{proof}

\begin{thm} \label{thm:irrationality}
\cite[Main Theorem]{DKMS} and \cite[Theorem 2.4]{HH} hold, assuming only Conjecture \ref{conj:standard}. More precisely, assume Conjecture \ref{conj:standard} for $s+1$ points, and let $L=dH-\sum_{i=1}^{s}m_{i}E_{i}$ be an ample line bundle on ${\rm Bl}_{s}(\mathbb{P}^{2})$ satisfying 
\begin{enumerate}[(i)]
\item $m_{1} \ge \cdots \ge m_{s} \ge 0$, and
\item $m_{1}+m_{2}+m_{3} \le d < \frac{(m_{1}+m_{2})^{2}+\sum_{i=1}^{s} m_{i}^{2}}{2(m_{1}+m_{2})}$. 
\end{enumerate}
Then $\epsilon(L;x)=\sqrt{L^{2}}$ for a general point $x \in {\rm Bl}_{s}(\mathbb{P}^{2})$. 
\end{thm}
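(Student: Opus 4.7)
The plan is to reduce the statement, via Proposition~\ref{prop:general relation}, to a computation of the Nakayama constant, and then to settle that computation by invoking Conjecture~\ref{conj:standard} on the further blow-up $\tilde{S}:={\rm Bl}_{x}({\rm Bl}_{s}(\mathbb{P}^{2}))={\rm Bl}_{s+1}(\mathbb{P}^{2})$. Since $L$ is ample and $L^{2}>0$, Proposition~\ref{prop:general relation} with $r=1$ makes $\epsilon(L;x)=\sqrt{L^{2}}$ equivalent to $\mu(L;x)=\sqrt{L^{2}}$, so the whole task reduces to establishing this equality of invariants.

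For the lower bound $\mu(L;x)\ge\sqrt{L^{2}}$ I argue by direct intersection theory on $\tilde{S}$. Let $\pi:\tilde{S}\to {\rm Bl}_{s}(\mathbb{P}^{2})$ and let $E$ denote the new exceptional divisor. For every rational $0<a<\sqrt{L^{2}}$, one has $(\pi^{*}L-aE)^{2}=L^{2}-a^{2}>0$, while $-(\pi^{*}L-aE)\cdot\pi^{*}L=-L^{2}<0$ and $\pi^{*}L$ is nef; this prevents $-(\pi^{*}L-aE)$ from being pseudo-effective. Combined with the surface-theoretic dichotomy ``$D^{2}>0$ implies $D$ or $-D$ is big,'' this forces $\pi^{*}L-aE$ to be big on $\tilde{S}$, so $\mu(L;x)\ge \sqrt{L^{2}}$.

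The upper bound $\mu(L;x)\le\sqrt{L^{2}}$ is the substantive direction. Suppose for contradiction that $\pi^{*}L-aE$ is big for some rational $a>\sqrt{L^{2}}$. Hypothesis (ii) rearranges (using $d>m_{1}+m_{2}$) to $(d-m_{1}-m_{2})^{2}>d^{2}-\sum_{i=1}^{s}m_{i}^{2}=L^{2}$, hence $\sqrt{L^{2}}<d-m_{1}-m_{2}$; thus I can shrink $a$ into the non-empty rational interval $(\sqrt{L^{2}},\,d-m_{1}-m_{2})$ and pick $k\in\mathbb{N}$ with $ka,km_{i},kd\in\mathbb{Z}$, so that $F:=kdH-\sum_{i=1}^{s}km_{i}E_{i}-kaE$ is effective on ${\rm Bl}_{s+1}(\mathbb{P}^{2})$. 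Sorting the multiplicities attached to the exceptional divisors in decreasing order, the new coefficient $ka$ may land anywhere; however, in every case the sum of the top three multiplicities is either $k(m_{1}+m_{2}+m_{3})\le kd$ (when $a\le m_{3}$, by hypothesis (ii)) or $k(m_{1}+m_{2}+a)<kd$ (when $a>m_{3}$, by the choice $a<d-m_{1}-m_{2}$). So $F$ is in standard form on ${\rm Bl}_{s+1}(\mathbb{P}^{2})$, yet $F^{2}=k^{2}(L^{2}-a^{2})<0$, which contradicts Conjecture~\ref{conj:standard} applied to the $s+1$ general points.

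The principal obstacle, and precisely the reason for the specific shape of hypothesis (ii), is verifying that $F$ remains in standard form regardless of where the new multiplicity $ka$ falls in the sorted list. The upper bound $d<\frac{(m_{1}+m_{2})^{2}+\sum_{i=1}^{s}m_{i}^{2}}{2(m_{1}+m_{2})}$ is calibrated precisely to open up the interval $(\sqrt{L^{2}},\,d-m_{1}-m_{2})$; this is exactly what is needed to bound the sum of the top three multiplicities by $kd$ in every case of the reordering, and hence to bring Conjecture~\ref{conj:standard} into play.
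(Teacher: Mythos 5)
Your proposal is correct and takes essentially the same route as the paper: both reduce the statement, via Proposition~\ref{prop:general relation}, to showing $\mu(L;x)=\sqrt{L^{2}}$, and both establish the upper bound $\mu(L;x)\le\sqrt{L^{2}}$ by verifying that for a suitable rational $\alpha$ just above $\sqrt{L^{2}}$ the class $dH-\sum m_{i}E_{i}-\alpha E$ is in standard form (after sorting), has negative square, and so cannot be effective by Conjecture~\ref{conj:standard} on $s+1$ points. Your organization of the case analysis (a single interval $(\sqrt{L^{2}},\,d-m_{1}-m_{2})$ followed by comparing $a$ with $m_{3}$) and your explicit verification of the lower bound $\mu(L;x)\ge\sqrt{L^{2}}$ are minor stylistic refinements of the paper's argument, which handles the cases $\sqrt{L^{2}}<m_{3}$ and $\sqrt{L^{2}}\ge m_{3}$ separately and leaves the lower bound implicit.
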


\begin{proof}
Let $\phi:{\rm Bl}_{s+1}(\mathbb{P}^{2}) \rightarrow {\rm Bl}_{s}(\mathbb{P}^{2})$ be the blow-up of ${\rm Bl}_{s}(\mathbb{P}^{2})$ at a general point $x \in {\rm Bl}_{s}(\mathbb{P}^{2})$ with an exceptional divisor $E$. Let $\alpha$ be a rational number such that $\alpha>\sqrt{d^{2}-\sum_{i=1}^{s}m_{i}^{2}}=\sqrt{L^{2}}$ and that $\lvert \alpha-\sqrt{d^{2}-\sum_{i=1}^{s}m_{i}^{2}} \rvert$ is sufficiently small. We claim that $L_{m,\alpha}=mdH-m \sum_{i=1}^{s}m_{i}E_{i}-m\alpha E$ is not effective for all sufficiently large and divisible $m \gg 0$. Since $L_{m,\alpha}^{2}<0$ and the Conjecture \ref{conj:standard} holds for $s+1$, it is enough to show that $L_{m,\alpha}$ is in standard form. If $\sqrt{d^{2}-\sum_{i=1}^{s}m_{i}^{2}}<m_{3}$, then we are done by choosing $\sqrt{d^{2}-\sum_{i=1}^{s}m_{i}^{2}}<\alpha<m_{3}$. So assume that $\sqrt{d^{2}-\sum_{i=1}^{s}m_{i}^{2}} \ge m_{3}$. Now, it suffices to show that $d>m_{1}+m_{2}+\sqrt{d^{2}-\sum_{i=1}^{s}m_{i}^{2}}$: if so, we may choose a rational $\alpha$ satisfying $d>m_{1}+m_{2}+\alpha>m_{1}+m_{2}+\sqrt{d^{2}-\sum_{i=1}^{s}m_{i}^{2}}$. However, it holds by our assumption $d < \frac{(m_{1}+m_{2})^{2}+\sum_{i=1}^{s} m_{i}^{2}}{2(m_{1}+m_{2})}$, which proves the claim. 

Now, it is a direct consequence of Proposition \ref{prop:general relation}. 
\end{proof}

\begin{rmk}
The condition on $d$ in Theorem \ref{thm:irrationality} implies that $s \ge 9$: in order to guarantee the existence of $d$, the inequality 
\begin{align*}
m_{1}+m_{2}+m_{3} < \frac{(m_{1}+m_{2})^{2}+\sum_{i=1}^{s} m_{i}^{2}}{2(m_{1}+m_{2})}
\end{align*}
should hold. Note that it is equivalent to the inequality $2m_{1}m_{2}+2m_{2}m_{3}+2m_{3}m_{1}<m_{3}^{2}+\cdots+m_{s}^{2}$. Since $m_{1} \ge \cdots \ge m_{s}$, it implies the inequality $m_{3}^{2}+\cdots+m_{8}^{2} \le 2m_{1}m_{2}+2m_{2}m_{3}+2m_{3}m_{1}<m_{3}^{2}+\cdots+m_{s}^{2}$. Thus we have $s \ge 9$. 
\end{rmk}

One advantage of Theorem \ref{thm:irrationality} is that the proof is simple compared to the importance of its conclusion: in fact, assumptions for the full Conjecture \ref{conj:standard} are not required for the existence of irrational Seshadri constants. Moreover, it provides a useful tool in computing lower bounds of Seshadri constants on general rational surfaces under Conjecture \ref{conj:standard}. For example, the following result holds:

\begin{cor} \label{cor:homogeneous}
Let $s \ge 9$, $L=dH-c \cdot \sum_{i=1}^{s}E_{i}$ an ample line bundle on ${\rm Bl}_{s}(\mathbb{P}^{2})$, and $x \in {\rm Bl}_{s}(\mathbb{P}^{2})$ a general point. Suppose Conjecture \ref{conj:standard} for quasi-homogeneous forms on $s+1$ points. 
\begin{enumerate}[(1)]
\item If $\frac{c}{d} \ge \frac{4}{r+4}$, then $\epsilon(L;x)=\sqrt{L^{2}}$. 
\item If $\frac{c}{d} < \frac{4}{r+4}$, then $\epsilon(L;x) \ge d-2c$. 
\end{enumerate}
\end{cor}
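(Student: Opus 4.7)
The proof splits according to the two regimes. For Part (1), the plan is to apply Theorem \ref{thm:irrationality} with $m_1 = \cdots = m_s = c$. Condition (i) is automatic, and condition (ii) reads
\begin{align*}
3c \le d < \frac{(2c)^2 + sc^2}{4c} = \frac{(s+4)c}{4}.
\end{align*}
The lower bound $3c \le d$ follows from the ampleness of $L$: since $L^2 > 0$ forces $d > \sqrt{s}\,c$ and $\sqrt{s} \ge 3$ for $s \ge 9$, we obtain $d > 3c$. The upper bound $d < (s+4)c/4$ is exactly $c/d > 4/(s+4)$, the strict form of the hypothesis, so Theorem \ref{thm:irrationality} immediately yields $\epsilon(L;x) = \sqrt{L^2}$. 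At the boundary $c/d = 4/(s+4)$, the identity $L^2 = d^2 - sc^2 = (d-2c)^2$ gives $\sqrt{L^2} = d - 2c$, and combining the Part (2) lower bound $\epsilon(L;x) \ge d - 2c$ with the standard upper bound $\epsilon(L;x) \le \sqrt{L^2}$ closes the gap.

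For Part (2), I invoke Proposition \ref{prop:general relation}, which gives
\begin{align*}
\epsilon(L;x) \ge \mu(L;x) - \sqrt{\mu(L;x)^2 - L^2}.
\end{align*}
The right-hand side is strictly decreasing in $\mu$, and an elementary computation shows that it is $\ge d - 2c$ exactly when $\mu(L;x) \le M$ with
\begin{align*}
M = \frac{L^2 + (d-2c)^2}{2(d-2c)} = d - \frac{(s-4)c^2}{2(d-2c)}.
\end{align*}
The Part (2) hypothesis $c/d < 4/(s+4)$ is equivalent to $d - 2c < \sqrt{L^2}$, which ensures $M > \sqrt{L^2}$, placing the relevant range $a > M$ inside the region where $D_a^2 = L^2 - a^2 < 0$. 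Thus it suffices to show that the quasi-homogeneous divisor $D_a := \phi^*L - aE = dH - c\sum_{i=1}^s E_i - aE$ on $\mathrm{Bl}_{s+1}(\mathbb{P}^2)$ fails to be big for every $a > M$.

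To force non-bigness, I apply Conjecture \ref{conj:standard} for quasi-homogeneous forms. The idea is to pass $D_a$ (or a suitable integer multiple, making $a$ rational with bounded denominator) through a sequence of quadratic Cremona transformations based at three exceptional divisors at a time, tracking how the top multiplicities and the degree evolve. Cremonas preserve both self-intersection (which stays negative) and effectivity. One verifies that once $a > M$, the Cremona orbit terminates in a quasi-homogeneous divisor in standard form in the sense of Definition \ref{defn:standard form}; Conjecture \ref{conj:standard} then precludes effectivity, so $D_a$ is not big and the bound $\mu(L;x) \le M$ follows, yielding $\epsilon(L;x) \ge d - 2c$ via the preceding paragraph.

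The main obstacle is the explicit Cremona bookkeeping in Part (2): controlling the evolution of the multiplicity vector $(d;a,c,\ldots,c)$ across successive pivots to identify precisely when standard form is first attained is delicate, because a naive iteration on the three largest multiplicities does not by itself terminate in standard form within the regime $a > d-2c$ — the top multiplicity and degree both decrease by a fixed amount $a + 2c - d$ per step, yet the standard-form inequality fails at every stage. The correct scheme must therefore select pivots more judiciously, and it may have to absorb $(-1)$-curves that appear as base components once Cremona produces negative multiplicities. Showing that exactly the threshold $M$ governs when this process first lands in standard form is the quantitative heart of the argument.
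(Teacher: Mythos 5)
Your Part (1) argument is essentially the paper's: apply Theorem \ref{thm:irrationality} in the homogeneous case, verify $3c < d$ from ampleness (you use $L^2>0$; the paper uses $\epsilon(\OO_{\mathbb{P}^2}(1);s)\le 1/\sqrt{s}$ — both are fine), recognize that $c/d>4/(s+4)$ is exactly condition~(ii), and handle the boundary case $c/d=4/(s+4)$ by noting $\sqrt{L^2}=d-2c$ there and deferring to Part~(2). That part is correct.

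Part (2) is where you diverge, and there is a genuine gap. Your plan — bound the Nakayama constant by $M=\frac{L^2+(d-2c)^2}{2(d-2c)}$ and feed it into the lower bound of Proposition \ref{prop:general relation} — is a sound reduction, and your computation of $M$ and the observation that $M>\sqrt{L^2}$ under the Part~(2) hypothesis are both correct. But the crux, showing $D_a=dH-c\sum E_i-aE$ is not pseudoeffective for $a>M$, never gets proved: since $a>M>d-2c$, the divisor $D_a$ fails standard form outright (one needs $d\ge a+2c$), so Conjecture \ref{conj:standard} does not apply directly, and you explicitly acknowledge that your Cremona reduction scheme does not terminate in standard form in the relevant regime. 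There is also a subtler obstruction you do not flag: the corollary assumes Conjecture \ref{conj:standard} only for \emph{quasi-homogeneous} forms, and quadratic Cremona transformations of $(d;a,c,\dots,c)$ do not in general stay quasi-homogeneous, so even a successful Cremona scheme would exceed the stated hypothesis.

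The paper avoids the Nakayama-constant route entirely. It chooses a rational $d'$ with $c\sqrt{s+1}<d'<\frac{(s+4)c}{4}\le d$ (possible for $s\ge 9$ since $16(s+1)<(s+4)^2$), writes $L=(d-d')H+L'$ with $L'=d'H-c\sum E_i$, notes $L'$ is ample by \cite[Theorem 2]{ST} and that $c/d'>4/(s+4)$ places $L'$ in the Part~(1) regime, hence $\epsilon(L';x)=\sqrt{L'^2}$. Superadditivity of Seshadri constants for nef summands then gives
\begin{align*}
\epsilon(L;x)\ \ge\ (d-d')+\sqrt{(d')^2-sc^2},
\end{align*}
which is increasing in $d'$, and letting $d'\to \frac{(s+4)c}{4}$ yields $d-2c$. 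This sidesteps the Cremona bookkeeping entirely and stays within the quasi-homogeneous hypothesis, since Part~(1) is applied only to $L'$. If you want to salvage your route, you would need to (a) extend the hypothesis to full Conjecture \ref{conj:standard} on $s+1$ points, and (b) actually carry out the Cremona reduction or find another criterion for non-bigness of $D_a$; as written, the argument stops short.
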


\begin{proof}
Keeping the notation of Theorem \ref{thm:irrationality}, note that $m_{1}=\cdots=m_{s}=c$. Since $s \ge 9$, $m_{1}+m_{2}+m_{3}=3c \le \sqrt{s} \cdot c <d$ since $\epsilon(\OO_{\mathbb{P}^{2}}(1);s) \le \frac{1}{\sqrt{s}}$ and $L$ is ample. Hence the inequality $m_{1}+m_{2}+m_{3} \le d$ holds automatically in this case. Moreover, 
\begin{align*}
\frac{(m_{1}+m_{2})^{2}+\sum_{i=1}^{s} m_{i}^{2}}{2(m_{1}+m_{2})}=\frac{c(s+4)}{4}.
\end{align*}
So $d<\frac{(m_{1}+m_{2})^{2}+\sum_{i=1}^{s} m_{i}^{2}}{2(m_{1}+m_{2})}$ is equivalent to $\frac{c}{d}>\frac{4}{s+4}$. Thus $(1)$ is a consequence of Theorem \ref{thm:irrationality} when $\frac{c}{d}>\frac{4}{s+4}$. 

If $\frac{c}{d}=\frac{4}{s+4}$, $\sqrt{L^{2}}=d-2c$, so we are left with checking $(2)$ when $\frac{c}{d} \le \frac{4}{s+4}$. Choose a rational $d'$ satisfying 
\begin{align*}
c \cdot \sqrt{s+1}<d'<(\frac{s+4}{4})c \le d. 
\end{align*}
It is possible since $s \ge 9$. Then $L$ can be written as $(d-d')H+(d'H-c \cdot \sum E_{i})$, so let $L'=d'H-c \cdot \sum E_{i}$. By our choice of $d'$ with \cite[Theorem 2]{ST}, $L'$ is an ample $\mathbb{Q}$-divisor on ${\rm Bl}_{s}(\mathbb{P}^{2})$. Choose a sufficiently divisible $m>0$ so that $mL'$ is integral. Then $(1)$ implies that 
\begin{align*}
\epsilon(mL';x)=\sqrt{m^{2}L'^{2}}=m \sqrt{(d')^{2}-sc^{2}},
\end{align*}
that is, $L-\sqrt{(d')^{2}-sc^{2}}$ is nef. Moreover, $\epsilon((d-d')H;x)=d-d'$ holds so that 
\begin{align*}
\epsilon(dH-c \cdot \sum_{i=1}^{s}E_{i};x) \ge (d-d')+\sqrt{(d')^{2}-sc^{2}}.
\end{align*}
Let $f(d')=(d-d')+\sqrt{(d')^{2}-sc^{2}}$. Since $f'(d')=-1+\frac{d'}{\sqrt{(d')^{2}-sc^{2}}}>0$, $f(d')$ is an increasing function on $d'$. Since $d'$ is a rational number smaller than $(\frac{s+4}{4})c$ and the nef cone is closed, 
\begin{align*}
\epsilon(dH-c \cdot \sum_{i=1}^{s}E_{i};x) \ge (d-(\frac{s+4}{4})c)+\sqrt{(\frac{s+4}{4})^{2}c^{2}-sc^{2}}=d-2c
\end{align*}
as desired. 
\end{proof}

As another by-product of Proposition \ref{prop:general relation}, we obtain the complete characterization of nef $\mathbb{R}$-divisors lying on the boundary of ${\overline{\rm Eff}({\rm Bl}_{s}(\mathbb{P}^{2}))}_{\mathbb{R}}$.

\begin{cor}
Let $L=dH-\sum_{i=1}^{s}m_{i}E_{i}$ be a $\mathbb{R}$-divisor lying on the boundary of ${\overline{\rm Eff}({\rm Bl}_{s}(\mathbb{P}^{2}))}_{\mathbb{R}}$. Then $L$ is nef if and only if $L^{2}=0$. In particular, homogeneous Nagata's conjecture implies Nagata's conjecture.
\end{cor}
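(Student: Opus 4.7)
The statement has two parts: the biconditional that $L$ is nef if and only if $L^2=0$ (for $L \in \partial \overline{\rm Eff}$), and the implication that the homogeneous Nagata conjecture implies the full Nagata conjecture. My plan is to prove the biconditional via a Zariski decomposition argument on the smooth surface ${\rm Bl}_s(\mathbb{P}^2)$, and then to deduce the implication by applying the biconditional to the homogeneous ray $H - c\sum E_i$.

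For the biconditional, the forward direction is formal: if $L$ is nef on a surface then $L^2 \ge 0$, and $L \in \partial \overline{\rm Eff}$ means $L$ is not big, so ${\rm vol}(L) = 0$; since ${\rm vol}(L) = L^2$ for nef $L$ on a surface, we get $L^2 = 0$. For the reverse, I would take the Zariski decomposition $L = P + N$ with $P$ nef, $N$ effective with negative-definite support, and $P \cdot N = 0$. Non-bigness of $L$ gives $P^2 = {\rm vol}(L) = 0$, and the identity $L^2 = P^2 + N^2$ then yields $N^2 = 0$. Negative-definiteness of the support of $N$ forces $N = 0$, so $L = P$ is nef.

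For the implication, assume the homogeneous Nagata conjecture for $s \ge 9$ general points $x_1, \dots, x_s$. The class $L_0 = H - (1/\sqrt{s}) \sum E_i$ then lies on $\partial \overline{\rm Eff}({\rm Bl}_s(\mathbb{P}^2))$ (the homogeneous conjecture precisely says the pseudo-effective threshold along this ray is $1/\sqrt{s}$) and satisfies $L_0^2 = 0$. By the biconditional, $L_0$ is nef, so $L_0 \cdot C \ge 0$ for every irreducible curve $C = eH - \sum n_i E_i$, i.e.\ $e \ge (1/\sqrt{s}) \sum n_i$; this is Nagata's inequality for $C$, and it extends by linearity to arbitrary effective classes, yielding the full multi-weight Nagata conjecture. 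Alternatively, the final clause of Proposition \ref{prop:general relation} applied with $S = \mathbb{P}^2$, $L = \mathcal{O}_{\mathbb{P}^2}(1)$, $r = s$ immediately equates $\mu(\mathcal{O}(1); x_1, \dots, x_s) = 1/\sqrt{s}$ with $\epsilon(\mathcal{O}(1); x_1, \dots, x_s) = 1/\sqrt{s}$, giving the same conclusion.

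The only technical point is the Zariski step: that $P^2 = {\rm vol}(L)$, the orthogonality $P \cdot N = 0$, and the negative-definiteness of the support of $N$ together force $N = 0$ when $L$ is non-big with $L^2 = 0$. All of these are standard features of the Zariski decomposition of pseudo-effective $\mathbb{R}$-divisors on smooth projective surfaces, so the argument is clean and the hardest part is conceptual rather than computational: recognizing that the boundary of the pseudo-effective cone meets the nef cone exactly along the null-cone $\{L^2 = 0\}$, which translates the homogeneous Nagata statement into nefness of $L_0$.
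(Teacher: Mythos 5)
Your proof is correct, but it takes a genuinely different and considerably cleaner route than the paper's. The paper first reduces (via Proposition \ref{prop:general relation}) to showing that $dH - \sum_{i\ge 2} m_i E_i$ is nef, and then establishes that nefness by checking, one at a time, the four numerical conditions of Hanumanthu's criterion \cite[Theorem 2.1]{H12} for line bundles on general blow-ups of $\mathbb{P}^2$; that verification occupies most of the printed proof. Your argument replaces all of this with the Zariski decomposition $L = P + N$: non-bigness of $L$ on the boundary of $\overline{\rm Eff}$ gives ${\rm vol}(L) = P^2 = 0$, the orthogonality $P\cdot N = 0$ gives $L^2 = P^2 + N^2 = N^2$, and $L^2 = 0$ together with the negative definiteness of the support of $N$ forces $N = 0$, so $L = P$ is nef. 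This is both shorter and strictly more general: it proves that on \emph{any} smooth projective surface a pseudoeffective $\mathbb{R}$-divisor on the boundary of the pseudoeffective cone is nef iff its self-intersection vanishes, with no genericity hypothesis on the blown-up points (which \cite{H12} requires and the paper implicitly assumes), and it bypasses Proposition \ref{prop:general relation} entirely in the biconditional. Your treatment of the final implication — applying the biconditional to $H - \tfrac{1}{\sqrt{s}}\sum E_i$, or alternatively feeding $\mu = 1/\sqrt{s}$ into the last clause of Proposition \ref{prop:general relation} — is correct and matches the paper's intent. What you lose relative to the paper is that the explicit nefness inequalities of \cite{H12} are verified along the way, which may have independent interest; what you gain is that the corollary is revealed to be an elementary fact about Zariski decompositions rather than a consequence of the machinery of this paper.
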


\begin{proof}
One direction is clear, so we focus on the reverse one. We may assume that $m_{1} \ge \dots \ge m_{s}>0$. By Proposition \ref{prop:general relation}, it is sufficient to show that $dH-\sum_{i=2}^{s}m_{i}E_{i}$ is nef. For its nefness, we will use \cite[Theorem 2.1]{H12}. 

Note that 
\begin{align*}
d^{2}-\frac{t+3}{t+2}(m_{2}^{2}+\dots+m_{t+1}^{2})&=m_{1}^{2}-\frac{1}{t+2}(m_{2}^{2}+\dots+m_{t+1}^{2})+m_{t+2}^{2}+\dots+m_{s}^{2} \\
&=\frac{1}{t+2}(m_{1}^{2}+\dots+m_{1}^{2})-\frac{1}{t+2}(m_{2}^{2}+\dots+m_{t+1}^{2})+m_{t+2}^{2}+\dots+m_{s}^{2}>0
\end{align*}
for any $2 \le t \le s-1$. So the condition (4) in \cite[Theorem 2.1]{H12} is checked. 

For (3) in \cite[Theorem 2.1]{H12}, note that
\begin{align*}
&3 \sqrt{m_{1}^{2}+\dots+m_{s}^{2}} > 2m_{2}+m_{3}+\dots+m_{8} \\ 
\Longleftrightarrow \text{ }&9(m_{1}^{2}+\dots+m_{s}^{2})>4m_{2}^{2}+m_{3}^{2}+\dots+m_{8}^{2}+4m_{2}(m_{3}+\dots+m_{8})+ 2m_{3}(m_{4}+\dots+m_{8})+ \\
\text{ }&2m_{4}(m_{5}+\dots+m_{8})+\dots+2m_{7}m_{8} \\ 
\Longleftrightarrow \text{ }&9m_{1}^{2}+5m_{2}^{2}+8m_{3}^{2}+\dots+8m_{t}^{2}-4m_{2}(m_{3}+\dots+m_{8})-2m_{3}(m_{4}+\dots+m_{8})-\dots-2m_{7}m_{8}>0.
\end{align*}
Using $m_{1} \ge \dots m_{s} \ge 0$, 
\begin{align*}
&9m_{1}^{2}+5m_{2}^{2}+8m_{3}^{2}+\dots+8m_{t}^{2}-4m_{2}(m_{3}+\dots+m_{8})-2m_{3}(m_{4}+\dots+m_{8})-\dots-2m_{7}m_{8} \\
&\ge 8m_{1}^{2}+4m_{3}^{2}+4m_{4}^{2}+\dots+4m_{8}^{2}-2m_{3}(m_{4}+\dots+m_{8})-\dots-2m_{7}m_{8} \\
&\ge 7m_{1}^{2}+3m_{4}^{2}+\dots+3m_{8}^{2}-2m_{4}(m_{5}+\dots+m_{8})-\dots-2m_{7}m_{8} \\
&\ge 6m_{1}^{2}+2m_{5}^{2}+\dots+2m_{8}^{2}-2m_{5}(m_{6}+m_{7}+m_{8})-\dots-2m_{7}m_{8}\\
&\ge 5m_{1}^{2}+m_{6}^{2}+m_{7}^{2}+m_{8}^{2}-2m_{6}(m_{7}+m_{8})-2m_{7}m_{8} \\
&\ge 4m_{1}^{2}-2m_{7}m_{8} >0, 
\end{align*}
which satisfies (3) in \cite[Theorem 2.1]{H12}. 

We are left with checking (1) $d \ge m_{2}+m_{3}$ and (2) $2d \ge m_{2}+\dots+m_{6}$. For (1), suppose that $d < m_{2}+m_{3}$. For $\alpha>0$, consider $(L-\alpha(H-E_{2}-E_{3}))^{2}=\alpha (2(m_{2}+m_{3}-d)-\alpha)$. By chossing a sufficiently small $0<\alpha \ll 1$, $(L-\alpha(H-E_{2}-E_{3}))^{2}>0$ since $m_{2}+m_{3}-d>0$. So $L-\alpha(H-E_{2}-E_{3})=(d-\alpha)H-m_{1}E_{1}-(m-\alpha)E_{2}-(m-\alpha)E_{3}-\sum_{i=4}^{s}m_{i}E_{i}$ is big by Riemann-Roch theorem. However, since $L$ and $H-E_{2}-E_{3}$ lie on the boundary of $({\overline{\rm Eff}({\rm Bl}_{s}(\mathbb{P}^{2}))}_{\mathbb{R}})$, $L-\alpha (H-E_{2}-E_{3})$ cannot be big for any $\alpha >0$. Thus $d \ge m_{2}+m_{3}$. 

Finally, for (2), suppose that $2d<m_{2}+\dots+m_{6}$. Again, consider $2H-E_{2}-\dots-E_{6}$ and $L-\alpha(2H-E_{2}-\dots-E_{6})$ for $\alpha>0$. The argument similar to the above leads to a contradiction. Hence $2d \ge m_{2}+\dots+m_{6}$. 
\end{proof}

\end{subsection}

\end{section}

\bibliographystyle{abbrv}

\end{document}